\numberwithin{equation}{section}
\numberwithin{figure}{section}
\theoremstyle{plain}
\newtheorem{thm}{\protect\theoremname}[section]
  \theoremstyle{definition}
  \newtheorem{defn}[thm]{\protect\definitionname}
  \theoremstyle{plain}
  \newtheorem{lem}[thm]{\protect\lemmaname}
  \theoremstyle{remark}
  \newtheorem{rem}[thm]{\protect\remarkname}
  \theoremstyle{plain}
  \newtheorem{cor}[thm]{\protect\corollaryname}
  \theoremstyle{plain}
  \newtheorem{question}[thm]{\protect\questionname}
  \theoremstyle{plain}
  \newtheorem{conjecture}[thm]{\protect\conjecturename}
  \theoremstyle{plain}
  \newtheorem{fact}[thm]{\protect\factname}
  \theoremstyle{remark}
  \newtheorem{notation}[thm]{\protect\notationname}
  \theoremstyle{plain}
  \newtheorem{prop}[thm]{\protect\propositionname}
  \theoremstyle{remark}
  \newtheorem*{claim*}{\protect\claimname}
  \theoremstyle{definition}
  \newtheorem{example}[thm]{\protect\examplename}
  \theoremstyle{remark}
  \newtheorem{claim}[thm]{\protect\claimname}
 \theoremstyle{remark}
 \newtheorem*{subclaim*}{\protect\subclaimname}
  \theoremstyle{definition}
  \newtheorem{problem}[thm]{\protect\problemname}
  \theoremstyle{remark}
  \newtheorem{note}[thm]{\protect\notename}
  \providecommand{\claimname}{Claim}
  \providecommand{\conjecturename}{Conjecture}
  \providecommand{\corollaryname}{Corollary}
  \providecommand{\definitionname}{Definition}
  \providecommand{\examplename}{Example}
  \providecommand{\factname}{Fact}
  \providecommand{\lemmaname}{Lemma}
  \providecommand{\notationname}{Notation}
  \providecommand{\notename}{Note}
  \providecommand{\problemname}{Problem}
  \providecommand{\propositionname}{Proposition}
  \providecommand{\questionname}{Question}
  \providecommand{\remarkname}{Remark}
 \providecommand{\subclaimname}{Subclaim}
\providecommand{\theoremname}{Theorem}
\begin{document}
\global\long\def\p{\mathbf{p}}
\global\long\def\q{\mathbf{q}}
\global\long\def\C{\mathfrak{C}}
\global\long\def\Ff{\mathbb{F}}
\global\long\def\AS{\varrho}
\global\long\def\Rr{\mathbb{R}}
\global\long\def\Zz{\mathbb{Z}}
\global\long\def\Qq{\mathbb{Q}}
\global\long\def\Cc{\mathbb{C}}
 \global\long\def\pr{\operatorname{pr}}
\global\long\def\image{\operatorname{im}}
\global\long\def\otp{\operatorname{otp}}
\global\long\def\dec{\operatorname{dec}}
\global\long\def\suc{\operatorname{suc}}
\global\long\def\pre{\operatorname{pre}}
\global\long\def\qe{\operatorname{qe}}
\global\long\def\trdeg{\operatorname{tr.deg}}
 \global\long\def\ind{\operatorname{ind}}
\global\long\def\Nind{\operatorname{Nind}}
\global\long\def\lev{\operatorname{lev}}
\global\long\def\Suc{\operatorname{Suc}}
\global\long\def\HNind{\operatorname{HNind}}
\global\long\def\minb{{\lim}}
\global\long\def\concat{\frown}
\global\long\def\cl{\operatorname{cl}}
\global\long\def\tp{\operatorname{tp}}
\global\long\def\id{\operatorname{id}}
\global\long\def\cons{\left(\star\right)}
\global\long\def\qf{\operatorname{qf}}
\global\long\def\ai{\operatorname{ai}}
\global\long\def\dtp{\operatorname{dtp}}
\global\long\def\acl{\operatorname{acl}}
\global\long\def\nb{\operatorname{nb}}
\global\long\def\limb{{\lim}}
\global\long\def\leftexp#1#2{{\vphantom{#2}}^{#1}{#2}}
\global\long\def\intr{\operatorname{interval}}
\global\long\def\atom{\emph{at}}
\global\long\def\I{\mathfrak{I}}
\global\long\def\uf{\operatorname{uf}}
\global\long\def\ded{\operatorname{ded}}
\global\long\def\Ded{\operatorname{Ded}}
\global\long\def\Df{\operatorname{Df}}
\global\long\def\Th{\operatorname{Th}}
\global\long\def\eq{\operatorname{eq}}
\global\long\def\dcl{\operatorname{dcl}}
\global\long\def\supp{\operatorname{supp}}
\global\long\def\Aut{\operatorname{Aut}}
\global\long\def\ac{ac}
\global\long\def\DfOne{\operatorname{Df}_{\operatorname{iso}}}

\title{Chain conditions in dependent groups}

\author{Itay Kaplan and Saharon Shelah}

\thanks{The second author would like to thank the Israel Science Foundation
for partial support of this research. Publication no. 993 on Shelah's
list.}
\begin{abstract}
In this note we prove and disprove some chain conditions in type definable
and definable groups in dependent, strongly dependent and strongly$^{2}$
dependent theories.
\end{abstract}
\maketitle

\section{Introduction}

This note is about chain conditions in dependent, strongly dependent
and strongly$^{2}$ dependent theories. 

Throughout, all formulas will be first order, $T$ will denote a complete
first order theory, and $\C$ will be the monster model of $T$ ---
a very big saturated model that contains all small models. We do not
differentiate between finite tuples and singletons unless we state
it explicitly. 
\begin{defn}
A formula $\varphi\left(x,y\right)$ has the independence property
in some model if for every $n<\omega$ there are $\left\langle a_{i},b_{s}\left|i<n,s\subseteq n\right.\right\rangle $
such that $\varphi\left(a_{i},b_{s}\right)$ holds iff $i\in s$.

A (first order) theory $T$ is dependent (sometimes also NIP) if it
does not have the independence property: there is no formula $\varphi\left(x,y\right)$
that has the independence property in any model of $T$. A model $M$
is dependent if $Th\left(M\right)$ is.
\end{defn}
For a good introduction to dependent theories appears we recommend
\cite{Ad}, but we shall give an exact reference to any fact we use,
so no prior knowledge is assumed.

What do we mean by a chain condition? rather than giving an exact
definition, we give an example of such a condition --- the first one.
It is the Baldwin-Saxl Lemma, which we shall present with the (very
easy and short) proof.
\begin{defn}
\label{def:uniformly definable}Suppose $\varphi\left(x,y\right)$
is a formula. Then if $G$ is a definable group in some model, and
for all $c\in C$, $\varphi\left(x,c\right)$ defines a subgroup,
then $\left\{ \varphi\left(\C,c\right)\left|\, c\in C\right.\right\} $
is a family of \emph{uniformly definable subgroups}.\end{defn}
\begin{lem}
\label{lem:BaldwinSaxl}\cite{BaSxl} Let $G$ be a group definable
in a dependent theory. Suppose $\varphi\left(x,y\right)$ is a formula
and that $\left\{ \varphi\left(x,c\right)\left|\, c\in C\right.\right\} $
defines a family of subgroups of $G$. Then there is a number $n<\omega$
such that any finite intersection of groups from this family is already
an intersection of $n$ of them.\end{lem}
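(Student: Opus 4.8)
The plan is to let the bound $n$ come straight from the failure of the independence property for $\varphi$, and to show that a finite intersection in which every member is \emph{necessary} would reconstruct an IP configuration whose size equals the number of members. Write $H_{c}:=\varphi\left(\C,c\right)$ for the subgroup cut out by $c$. Using that $T$ is dependent, I would first fix a number $n<\omega$ witnessing that the formula $\psi\left(x,y\right):=\varphi\left(y,x\right)$ --- the variable-swap of $\varphi$, which is itself a formula and so, $T$ being dependent, lacks the independence property --- admits no configuration $\left\langle a_{i},b_{s}\mid i<n,\, s\subseteq n\right\rangle $ with $\psi\left(a_{i},b_{s}\right)$ holding iff $i\in s$.

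Given a finite family $\left\langle c_{i}\mid i\in I\right\rangle $, I would choose $J\subseteq I$ minimal with $\bigcap_{i\in J}H_{c_{i}}=\bigcap_{i\in I}H_{c_{i}}$; it then suffices to bound $\left|J\right|$. By minimality the intersection strictly grows whenever some $H_{c_{j}}$ is dropped, so for each $j\in J$ I may pick
\[
a_{j}\in\Big(\bigcap_{i\in J\setminus\{j\}}H_{c_{i}}\Big)\setminus H_{c_{j}}.
\]
Thus for $i,j\in J$ we have $a_{j}\in H_{c_{i}}$ iff $i\neq j$.

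The heart of the argument is to convert these witnesses into a shattering pattern using the group operation. Fixing an order on $J$, for $s\subseteq J$ set $b_{s}:=\prod_{j\in J\setminus s}a_{j}$, the product taken in that order. Fix $i\in J$. If $i\in s$ then $i$ is not among the factors, so every factor lies in $H_{c_{i}}$ and hence $b_{s}\in H_{c_{i}}$. If $i\notin s$ then exactly one factor, namely $a_{i}$, lies outside $H_{c_{i}}$ while all others lie inside it; writing $b_{s}=u\, a_{i}\, v$ with $u,v\in H_{c_{i}}$ and using that $H_{c_{i}}$ is a subgroup gives $b_{s}\in H_{c_{i}}\iff a_{i}\in H_{c_{i}}$, so $b_{s}\notin H_{c_{i}}$. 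Hence $\psi\left(c_{i},b_{s}\right)=\varphi\left(b_{s},c_{i}\right)$ holds iff $i\in s$, which (restricting to any $n$ of the indices) is exactly the configuration forbidden for $\psi$ once $\left|J\right|\geq n$. Therefore $\left|J\right|<n$, so every finite intersection coincides with an intersection of fewer than $n$ members, and $n$ is uniform for the whole family.

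The main obstacle, and the only place the group structure is used, is the middle computation: one must verify that a product of the $a_{j}$ lands in $H_{c_{i}}$ precisely according to whether the single ``bad'' index occurs, which relies on each $H_{c_{i}}$ being a genuine subgroup --- closed under products and inverses --- rather than merely a definable set. The one other point needing care is the bookkeeping around dependence, namely that passing from $\varphi\left(x,y\right)$ to its variable-swap $\psi$ preserves the absence of the independence property, legitimising the bound $n$ chosen at the outset.
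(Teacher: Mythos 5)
Your proof is correct and is essentially the paper's own argument: the paper likewise extracts witnesses $g_{i}$ with $\varphi\left(g_{i},c_{j}\right)$ iff $i\neq j$ and forms the products $g_{s}=\prod_{i\in s}g_{i}$ to produce a shattering pattern contradicting dependence, exactly your construction up to replacing $s$ by its complement. Your additions --- the explicit minimal subfamily $J$, the sandwich computation $b_{s}=u\,a_{i}\,v$ showing order is irrelevant, and the remark that the variable-swapped formula is also NIP since $T$ is dependent --- merely spell out steps the paper leaves implicit.
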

\begin{proof}
Suppose not, then for every $n<\omega$ there are $c_{0},\ldots,c_{n-1}\in C$
and $g_{0},\ldots,g_{n-1}\in G$ (in some model) such that $\varphi\left(g_{i},c_{j}\right)$
holds iff $i\neq j$. For $s\subseteq n$, let $g_{s}=\prod_{i\in s}g_{i}$
(the order does not matter), then $\varphi\left(g_{s},c_{j}\right)$
iff $j\notin s$ --- this is a contradiction. 
\end{proof}
In stable theories (which we shall not define here), the Baldwin-Saxl
lemma is even stronger: every intersection of such a family is really
a finite one (see \cite[Proposition 1.4]{PoizatSatbleGroups}). 

The focus of this note is type definable groups in dependent theories,
where such a proof does not work.
\begin{defn}
A \emph{type definable group} for a theory $T$ is a type --- a collection
$\Sigma\left(x\right)$ of formulas (maybe over parameters), and a
formula $\nu\left(x,y,z\right)$, such that in the monster model $\C$
of $T$, $\left\langle \Sigma\left(\C\right),\nu\right\rangle $ is
a group with $\nu$ defining the group operation (without loss of
generality, $T\models\forall xy\exists^{\leq1}z\left(\nu\left(x,y,z\right)\right)$).
We shall denote this operation by $\cdot$. 
\end{defn}
In stable theories, their analysis becomes easier as each type definable
group is an intersection of definable ones (see \cite{PoizatSatbleGroups}).
\begin{rem}
In this note we assume that $G$ is a finitary type definable group,
i.e. $x$ above is a finite tuple.\end{rem}
\begin{defn}
\label{def:unbounded index}Suppose $G\geq H$ are two type definable
groups ($H$ is a subgroup of $G$). We say that the index $\left[G:H\right]$
is \emph{unbounded}, or $\infty$, if for any cardinality $\kappa$,
there exists a model $M\models T$, such that $\left[G^{M}:H^{M}\right]\geq\kappa$.
Equivalently (by the Erd\H{o}s-Rado coloring theorem), this means
that there exists (in $\C$) a sequence of indiscernibles $\left\langle a_{i}\left|\, i<\omega\right.\right\rangle $
(over the parameters defining $G$ and $H$) such that $a_{i}\in G$
for all $i$, and $i<j\Rightarrow a_{i}\cdot a_{j}^{-1}\notin H$.
In $\C$, this means that $\left[G^{\C}:H^{\C}\right]=\left|\C\right|$.
When $G$ and $H$ are definable, then by compactness this is equivalent
to the index $\left[G:H\right]$ being infinite. 

So $\left[G:H\right]$ is \emph{bounded} if it is not unbounded.
\end{defn}
This leads to the following definition
\begin{defn}
Let $G$ be a type definable group. 
\begin{enumerate}
\item For a set $A$, $G_{A}^{00}$ is the minimal $A$-type definable subgroup
of $G$ of bounded index. 
\item We say that $G^{00}$ exists if $G_{A}^{00}=G_{\emptyset}^{00}$ for
all $A$.
\end{enumerate}
\end{defn}
Shelah proved
\begin{thm}
\cite{Sh876} \emph{If $G$ is a type definable group in a dependent
theory, then $G^{00}$ exists. }
\end{thm}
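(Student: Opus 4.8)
The plan is to separate the statement into a soft part and the genuinely dependent part. First I would show, for an arbitrary parameter set $A$, that $G_{A}^{00}$ exists -- this uses nothing about $T$. Writing $\lambda=\left|A\right|+\left|T\right|$, there are at most $2^{\lambda}$ partial types over $A$, hence at most $2^{\lambda}$ distinct $A$-type-definable subgroups of $G$; in the monster this is a bounded quantity. Let $\left\langle H_{i}\left|\, i<\mu\right.\right\rangle$ with $\mu\le2^{\lambda}$ list those of bounded index, and set $G_{A}^{00}=\bigcap_{i<\mu}H_{i}$. The homomorphism $g\mapsto\left(gH_{i}\right)_{i<\mu}$ has kernel $G_{A}^{00}$ and embeds $G/G_{A}^{00}$ into $\prod_{i<\mu}G/H_{i}$, a bounded product of bounded sets, so $\left[G:G_{A}^{00}\right]$ is bounded. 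Being an intersection of boundedly many $A$-type-definable sets, $G_{A}^{00}$ is $A$-type-definable, and it is contained in every bounded-index $A$-type-definable subgroup, so it is the minimal one.

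For the dependent part I would record two formal properties: monotonicity, so that $A\subseteq B$ gives $G_{B}^{00}\subseteq G_{A}^{00}$, and equivariance, $\sigma\left(G_{A}^{00}\right)=G_{\sigma\left(A\right)}^{00}$ for every $\sigma\in\Aut\left(\C\right)$ (an automorphism permutes the defining family). In particular $G_{A}^{00}$ is cut out by a partial type $\pi\left(x,\bar{a}\right)$, where $\bar{a}$ enumerates $A$, and for each $\bar{a}'\models p:=\tp\left(\bar{a}\right)$ the set $\pi\left(\C,\bar{a}'\right)=G_{\bar{a}'}^{00}$ is again a bounded-index subgroup. Since $G_{A}^{00}\subseteq G_{\emptyset}^{00}$ by monotonicity, it suffices to prove the reverse inclusion, and for that it is enough to show that the $\Aut\left(\C\right)$-invariant subgroup
\[
N=\bigcap_{\bar{a}'\models p}\pi\left(\C,\bar{a}'\right)=\bigcap_{\sigma\in\Aut\left(\C\right)}\sigma\left(G_{A}^{00}\right)
\]
has bounded index and is type-definable over $\emptyset$. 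Granting this, $N$ is an $\emptyset$-type-definable bounded-index subgroup with $N\subseteq G_{A}^{00}\subseteq G_{\emptyset}^{00}$, while minimality of $G_{\emptyset}^{00}$ gives $G_{\emptyset}^{00}\subseteq N$; hence all three coincide, and as $A$ was arbitrary, $G^{00}$ exists.

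Everything now rests on the behaviour of the invariant intersection $N$, and this is the only place where dependence enters. To bound $\left[G:N\right]$ I would argue by contradiction through the indiscernible reformulation in Definition \ref{def:unbounded index}: an unbounded index produces an indiscernible sequence $\left\langle g_{i}\left|\, i<\omega\right.\right\rangle$ in $G$ with $g_{i}\cdot g_{j}^{-1}\notin N$ for $i<j$, so each difference $g_{i}\cdot g_{j}^{-1}$ leaves some conjugate $\pi\left(\C,\bar{a}'\right)$, witnessed by the failure of a single formula $\psi\in\pi$. After extracting so that the witnessing formula is a fixed $\psi\left(x,\bar{y}\right)$ and the witnessing parameters form an indiscernible array, I would feed this into the alternation characterisation of dependence -- for each $\psi$ there is a finite bound on how often $\psi\left(g,\bar{b}_{i}\right)$ changes truth value along an indiscernible sequence $\left\langle \bar{b}_{i}\right\rangle$ -- to manufacture a violation of that bound, a contradiction. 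For the type-definability of $N$ over $\emptyset$ I would use the same circle of ideas to show that the intersection defining $N$ is already realised along an indiscernible sequence in $p$ of bounded length; this exhibits $N$ as type-definable over a small set, and an $\Aut\left(\C\right)$-invariant set type-definable over a small set is type-definable over $\emptyset$.

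I expect this dependent step to be the real obstacle, and it is exactly where the type-definable case departs from the painless Lemma \ref{lem:BaldwinSaxl}. In the Baldwin--Saxl situation each $\psi\left(\C,c\right)$ is itself a subgroup, so the products $g_{s}=\prod_{i\in s}g_{i}$ remain under full control and directly encode an independence pattern. Here the individual formulas $\psi$ of the defining type $\pi$ are not subgroups: knowing that a product lies outside $\pi\left(\C,\bar{a}'\right)$ does not tell us which formula of $\pi$ it violates, so the clean product trick collapses. The delicate heart of the argument is therefore to marry the group structure of the $\pi\left(\C,\bar{a}'\right)$ with the alternation bound applied one formula at a time, arranging the escaping differences into a configuration on which a single fixed $\psi$ alternates beyond its dependence bound; keeping this extraction uniform -- in the number of formulas of $\pi$ that are involved and in the length of indiscernible sequence required -- is the technically demanding part.
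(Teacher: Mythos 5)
Your high-level architecture is the standard (and correct) skeleton for this theorem: the soft existence of $G_{A}^{00}$ for each $A$ (your counting slip --- there are $2^{2^{\lambda}}$ partial types over $A$, not $2^{\lambda}$ --- is harmless, since any fixed cardinal is bounded), the passage to the invariant intersection $N=\bigcap_{\bar{a}'\models p}\pi\left(\C,\bar{a}'\right)$, and the fact that an $\Aut\left(\C\right)$-invariant subgroup of bounded index which is type-definable over a small set is type-definable over $\emptyset$, after which minimality closes the sandwich. Note that the paper itself does not prove this theorem (it cites \cite{Sh876}), but it does prove the ingredient your proposal lacks: Lemma \ref{lem:BaldSaxType} and its corollary for indiscernible sequences, which the Remark following that corollary explicitly identifies as ``the kernel of the proof'' that $G^{00}$ exists.

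The genuine gap is that the one step where dependence enters --- bounding $\left[G:N\right]$ and exhibiting $N$ as a bounded sub-intersection --- is the entire mathematical content of the theorem, and your proposal does not prove it; you yourself flag it as ``the technically demanding part.'' Moreover, the route you sketch (pick, for each escaping difference $g_{i}\cdot g_{j}^{-1}$, a witnessing formula $\psi\in\pi$, extract indiscernibles, and violate the alternation bound) collapses for exactly the reason you diagnose: a single formula of $\pi$ is not a subgroup, so nothing forces a fixed $\psi$ to alternate on the data you produce. The missing idea is a compactness trick that makes the chosen formula robust under two-sided translation by the subgroup: if $H=\pi\left(\C,\bar{a}'\right)$ is a subgroup and $g\notin H$, then $H\cdot g\cdot H\cap H=\emptyset$, and since $H\cdot g\cdot H$ is type-definable, compactness yields a single formula $\varphi$ of $\pi$ with $\neg\varphi\left(d_{1}gd_{2},\bar{a}'\right)$ for all $d_{1},d_{2}\in H$. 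This is what resurrects the Baldwin--Saxl product trick: for $i\notin s$ the product $g_{s}=\prod_{j\in s}g_{j}$ lies in $H_{i}\subseteq\varphi\left(\C,\bar{a}_{i}\right)$, while for $i\in s$ it has the form $d_{1}g_{i}d_{2}$ with $d_{1},d_{2}\in H_{i}$, so $\neg\varphi\left(g_{s},\bar{a}_{i}\right)$, giving an independence pattern. One also needs a cardinality pigeonhole over more than $\left|\pi\right|$ many conjugates to make $\varphi$ constant --- and this cannot be replaced by any finite or uniform extraction, since Section \ref{sec:An-example} shows Property A fails even in stable theories; this is why only the ``drop one among $\left|p\right|^{+}$ many'' statement of Lemma \ref{lem:BaldSaxType} is available. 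With that lemma in hand, your outline does complete: a strictly decreasing (with unbounded-index drops) family of conjugates yields, via Erd\H{o}s--Rado, an indiscernible sequence contradicting the corollary, so $N$ is an intersection of boundedly many conjugates, hence of bounded index, invariant, and therefore $\emptyset$-type-definable.
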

Even though fields are not the main concern of this note, the following
question is in the basis of its motivation. Recall
\begin{thm}
\cite[Theorem VI.6.4]{Lang} (Artin-Schreier) Let $k$ be a field
of characteristic $p$. Let $\AS$ be the polynomial $X^{p}-X$. 
\begin{enumerate}
\item Given $a\in k$, either the polynomial $\AS-a$ has a root in $k$,
in which case all its root are in $k$, or it is irreducible. In the
latter case, if $\alpha$ is a root then $k(\alpha)$ is cyclic of
degree $p$ over $k$.
\item Conversely, let $K$ be a cyclic extension of $k$ of degree $p$.
Then there exists $\alpha\in K$ such that $K=k(\alpha)$ and for
some $a\in k$, $\AS(\alpha)=a$.
\end{enumerate}

Such extensions are called Artin-Schreier extensions. 

\end{thm}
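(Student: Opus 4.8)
The plan is to exploit the additivity of the operator $\AS$. In characteristic $p$ one has $\AS\left(x+y\right)=\left(x+y\right)^{p}-\left(x+y\right)=\left(x^{p}-x\right)+\left(y^{p}-y\right)=\AS\left(x\right)+\AS\left(y\right)$, so $\AS$ is an additive homomorphism whose kernel is exactly the prime field $\Ff_{p}$, since $i^{p}=i$ for every $i\in\Ff_{p}$ and $X^{p}-X$ has at most $p$ roots.

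For part (1), fix $a\in k$ and suppose $\alpha$ is a root of $\AS-a$ in some extension. Then for each $i\in\Ff_{p}$ we get $\AS\left(\alpha+i\right)=\AS\left(\alpha\right)+\AS\left(i\right)=a+0=a$, so the complete set of roots is $\left\{ \alpha+i\,\left|\, i\in\Ff_{p}\right.\right\} $; in particular $\AS-a$ is separable, as its derivative is $-1$. If $\alpha\in k$ then every root $\alpha+i$ lies in $k$ and the polynomial splits. Otherwise I would show it is irreducible: if $f$ is a monic factor of degree $d$ with $0<d<p$, its roots form a subset $S\subseteq\left\{ \alpha+i\,\left|\, i\in\Ff_{p}\right.\right\} $ of size $d$, and the coefficient of $X^{d-1}$ equals $-\sum_{\beta\in S}\beta=-\left(d\alpha+c\right)$ with $c\in\Ff_{p}$. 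Since $d\not\equiv0\pmod p$ is invertible, this coefficient lying in $k$ would force $\alpha\in k$, a contradiction. Hence $\left[k\left(\alpha\right):k\right]=p$. As $k\left(\alpha\right)/k$ contains all the roots it is normal, and it is separable, so it is Galois of prime degree $p$, hence cyclic, with $\sigma:\alpha\mapsto\alpha+1$ generating the Galois group.

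For part (2) the key input is the additive form of Hilbert's Theorem 90: if $K/k$ is cyclic with $\Aut\left(K/k\right)=\left\langle \sigma\right\rangle $, then $\beta\in K$ has $\operatorname{Tr}_{K/k}\left(\beta\right)=0$ if and only if $\beta=\sigma\left(\gamma\right)-\gamma$ for some $\gamma\in K$. I would apply this to $\beta=1$: its trace is $\sum_{i<p}\sigma^{i}\left(1\right)=p\cdot1=0$ in characteristic $p$, so there is $\alpha\in K$ with $\sigma\left(\alpha\right)-\alpha=1$, i.e. $\sigma\left(\alpha\right)=\alpha+1$. Since $\sigma\left(\alpha\right)\neq\alpha$, the element $\alpha$ is not $\sigma$-fixed, so $\alpha\notin k$; as $\left[K:k\right]=p$ is prime, this gives $K=k\left(\alpha\right)$. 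Setting $a:=\AS\left(\alpha\right)$ one checks $\sigma\left(a\right)=\AS\left(\sigma\left(\alpha\right)\right)=\AS\left(\alpha+1\right)=\AS\left(\alpha\right)+\AS\left(1\right)=a$, so $a$ is $\sigma$-invariant, hence $a\in k$, and $\alpha$ is the desired Artin--Schreier generator.

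The main obstacle is establishing additive Hilbert 90 in part (2); this is where the genuine cohomological content sits. Everything else --- the additivity of $\AS$, the description of the root set, the irreducibility computation, and the verification that $\AS\left(\alpha\right)$ is $\sigma$-invariant --- is elementary. If one prefers not to cite Hilbert 90 as a black box, it can be proved directly via Dedekind's lemma on the linear independence of the automorphisms $\id,\sigma,\ldots,\sigma^{p-1}$, which furnishes an element of nonzero trace and then solves the relevant telescoping equation.
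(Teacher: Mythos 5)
The paper does not actually prove this theorem --- it is quoted as a known result from \cite[Theorem VI.6.4]{Lang} --- so the only meaningful comparison is with the classical argument in the cited source, and that is precisely what you give: in (1), translation of a root $\alpha$ by elements of $\Ff_{p}$ to list all roots, separability from the derivative $-1$, and the coefficient-of-$X^{d-1}$ trick (invertibility of $d$ modulo $p$) to force irreducibility when $\alpha\notin k$; in (2), additive Hilbert 90 applied to the trace-zero element $1$ to produce $\alpha$ with $\sigma\left(\alpha\right)=\alpha+1$, whence $K=k\left(\alpha\right)$ and $\AS\left(\alpha\right)\in k$. Your proof is correct and complete, with the one external input (additive Hilbert 90) being standard and correctly flagged, together with a valid sketch of how to obtain it from linear independence of the automorphisms $\id,\sigma,\ldots,\sigma^{p-1}$.
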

The first author, in a joint paper with Thomas Scanlon and Frank Wagner,
proved
\begin{thm}
\label{thm:NIPFieldsAS}\cite{KaScWa} Let $K$ be an infinite dependent
field of characteristic $p>0$. Then $K$ is Artin-Schreier closed
--- i.e. $\AS$ is onto. 
\end{thm}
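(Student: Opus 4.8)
The plan is to argue by contradiction: assume the additive Artin--Schreier map $\AS(x)=x^{p}-x$ is not onto and derive a contradiction from Lemma \ref{lem:BaldwinSaxl}. First I would record the algebraic setup. Since $\operatorname{char}K=p$, the map $\AS$ is an endomorphism of the additive group $(K,+)$ (Frobenius is additive), its kernel is exactly $\Ff_{p}$, and its image $A:=\AS(K)=\{\,x : \exists y\,(x=y^{p}-y)\,\}$ is a \emph{definable} subgroup of $(K,+)$. As $K$ is infinite we have $K\neq\Ff_{p}$, so $A\neq 0$; and if $\AS$ is not onto then $A\neq K$, so the quotient $V:=K/A$ is a nonzero $\Ff_{p}$-vector space. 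Finally, being an infinite field of characteristic $p$, $K$ has infinite dimension over $\Ff_{p}$ (a finite-dimensional space over a finite field is finite); this infinite-dimensionality is the engine of the whole argument.

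Next I would bring in the multiplicative structure. For each $a\in K^{\times}$ the set $aA$ is again a definable subgroup of $(K,+)$, and the formula $\varphi(x,a)\equiv\exists y\,(x=a\,(y^{p}-y))$ exhibits $\{\,aA : a\in K^{\times}\,\}$ as a \emph{uniformly definable} family of subgroups in the sense of Definition \ref{def:uniformly definable}. Lemma \ref{lem:BaldwinSaxl} then furnishes an $n<\omega$ so that every finite intersection of members of this family is already an intersection of at most $n$ of them; equivalently, along any chain the intersections $\bigcap_{i\le m}a_{i}A$ must stabilise after at most $n$ genuinely new steps. The goal is therefore to manufacture, for arbitrarily large $m$, elements $a_{1},\dots,a_{m}$ for which \emph{every} translate is needed, i.e.\ $\bigcap_{i\neq j}a_{i}A\not\subseteq a_{j}A$ for all $j$; for $m>n$ this contradicts the lemma (if the full intersection equalled an $n$-subfamily, the omitted index would be redundant).

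The heart of the matter is transparent when $[K:A]=p$. Then $A$ is the kernel of a nonzero $\Ff_{p}$-linear functional, the $\Ff_{p}$-bilinear form $B(a,b):=ab+A\in V\cong\Ff_{p}$ is nondegenerate (because $aK=K\not\subseteq A$ for $a\neq 0$), and one computes that the functionals $b\mapsto B(a_{i},b)$ are $\Ff_{p}$-linearly independent precisely when $a_{1},\dots,a_{m}$ are. Since $\dim_{\Ff_{p}}K=\infty$ I can choose arbitrarily many linearly independent $a_{i}$; then $b\mapsto(B(a_{i},b))_{i}$ maps $K$ onto $\Ff_{p}^{m}$, which both realises all $2^{m}$ incidence patterns of the formula $xy\in A$ (so this formula has the independence property, already contradicting dependence of $T$) and shows each translate is needed. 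Equivalently, here each $a_{i}A$ has index $p$, the $m$-fold intersection has index exactly $p^{m}$, and no intersection of $n<m$ of them can match it.

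The hard part is dispensing with the assumption $[K:A]=p$. A priori the index $[K:\AS(K)]$ could be a larger power of $p$ or even infinite, and then the clean picture breaks: the relevant maps $x\mapsto a^{-1}x+A$ take values in the higher-dimensional space $V$, arbitrary $\Ff_{p}$-endomorphisms of $V$ (not merely scalars) enter, and $\Ff_{p}$-linear independence of the $a_{i}$ no longer forces the translates $a_{i}A$ to be independent; worse, when the index is infinite the intersections $\bigcap_{i\le m}a_{i}A$ can collapse to $0$, so a naive greedy chain simply stabilises at $0$ without contradicting Baldwin--Saxl. The essential work, therefore, is to \emph{control the index}: to show that non-surjectivity of $\AS$ forces a configuration of finite index --- equivalently, that the ``multiplier ring'' $\{\,c\in K:cA\subseteq A\,\}$ is just $\Ff_{p}$ --- so that one may reduce to the index-$p$ case and play the growth $p^{m}$ against the Baldwin--Saxl bound $n$. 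Establishing this finiteness, combining the NIP chain condition with the field arithmetic of $\AS$, is where I expect the real difficulty to lie.
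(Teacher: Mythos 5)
Your proposal is incomplete, and the gap you flag at the end is in fact the entire content of the theorem. Note first that the paper does not prove this statement: it is quoted from \cite{KaScWa}, and the only hint given is the remark following it, namely that it suffices to find $n$ and $n+1$ \emph{algebraically independent} elements $\left\langle a_{i}\left|\, i\leq n\right.\right\rangle $ in $k:=K^{p^{\infty}}$ with $\bigcap_{i<n}a_{i}\AS\left(K\right)=\bigcap_{i\leq n}a_{i}\AS\left(K\right)$. You correctly identify the NIP input --- Baldwin--Saxl (Lemma \ref{lem:BaldwinSaxl}) applied to the uniformly definable family $\left\{ a\AS\left(K\right)\left|\, a\in K^{\times}\right.\right\} $ of additive subgroups --- and your treatment of the special case $\left[K:\AS\left(K\right)\right]=p$ is correct (the nondegenerate pairing $B\left(a,b\right)=ab+\AS\left(K\right)$ does give the independence property for $xy\in\AS\left(K\right)$). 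But this special case is easy precisely because index $p$ makes the translates behave like independent hyperplanes; nothing in your argument survives when the index is $p^{2}$, let alone infinite.

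The missing idea is that the actual proof does not control the index $\left[K:\AS\left(K\right)\right]$ at all, and your proposed reduction (showing the multiplier ring $\left\{ c\left|\, c\AS\left(K\right)\subseteq\AS\left(K\right)\right.\right\} $ equals $\Ff_{p}$ and deducing finite index) is not available --- there is no known way to extract index control directly from NIP. Instead, one replaces $\Ff_{p}$-linear independence by \emph{algebraic} independence: working in a saturated model, the perfect core $k=K^{p^{\infty}}$ is an infinite perfect subfield, so one can pick $a_{0},\ldots,a_{n}\in k$ algebraically independent, where $n$ is the Baldwin--Saxl bound. Baldwin--Saxl then yields some $j$ with $\bigcap_{i\neq j}a_{i}\AS\left(K\right)\subseteq a_{j}\AS\left(K\right)$, and the heart of the proof of \cite{KaScWa} is a purely algebro-geometric lemma: over a perfect field, such a containment of kernels of Artin--Schreier-type covers (equivalently, a quotient relation among the corresponding connected one-dimensional unipotent groups, i.e.\ among fiber products of the covers $y^{p}-y=a_{i}x$ of the additive group) forces an additive-polynomial identity expressing $a_{j}$ in terms of $p$-power twists $a_{i}^{p^{m}}$ of the other coefficients, which contradicts algebraic independence --- unless $\AS\left(K\right)=K$. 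This interplay between the model-theoretic chain condition and the theory of additive polynomials over perfect fields is exactly the step your sketch leaves open, so as it stands the proposal does not prove the theorem.
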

What about the type definable case? What if $K$ is an infinite type
definable field? 

In simple theories (which we shall not define), we have:
\begin{thm}
\cite{KaScWa} Let $K$ be a type definable field in a simple theory.
Then $K$ has boundedly many AS extensions. 
\end{thm}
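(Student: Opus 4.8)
The plan is to reduce the statement to a bound on the index of the image of the Artin--Schreier map $\AS\colon x\mapsto x^{p}-x$, and then to bound that index using the independence theorem of simple theories. By the cited Artin--Schreier theorem, the cyclic degree-$p$ extensions of $K$ are classified by the nonzero vectors of the $\Ff_{p}$-vector space $K^{+}/\AS(K)$ modulo scaling by $\Ff_{p}^{\times}$, where $K^{+}$ is the additive group of $K$; since $\AS(x+y)=\AS(x)+\AS(y)$ in characteristic $p$, the image $\AS(K)$ is a subgroup of $K^{+}$ with kernel $\Ff_{p}$. Hence it is enough to show that the index $[K^{+}:\AS(K)]$ is bounded in the sense of Definition~\ref{def:unbounded index}: then $K^{+}/\AS(K)$ is a bounded $\Ff_{p}$-vector space, so it has only boundedly many one-dimensional subspaces, i.e.\ boundedly many Artin--Schreier extensions.

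First I would fix a small model $M$ over which $K$ is defined and pass to the connected component $(K^{+})^{00}$, the smallest type-definable subgroup of $K^{+}$ of bounded index; this exists because $T$ is simple (the analogue for simple theories of the quoted theorem of Shelah, obtained from the chain condition on type-definable subgroups). As $[K^{+}:(K^{+})^{00}]$ is bounded, it suffices to prove $(K^{+})^{00}\subseteq\AS(K)$. Since $(K^{+})^{00}$ is generated by the differences $g-h$ of pairs of non-forking independent realizations of a generic type of $K^{+}$ over $M$, the whole problem reduces to a single statement: for independent additive generics $g,h$, the equation $\AS(x)=g-h$ has a solution in $K$.

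The engine for this is the independence theorem over $M$. Given $g$ independent from $h$ over $M$, I would look for an intermediate $d$ with $d-g\in\AS(K)$ and $d-h\in\AS(K)$, for then $g-h=(g-d)+(d-h)\in\AS(K)$. To produce $d$ by amalgamation one wants a realization $d_{1}\in g+\AS(K)$ independent from $g$ over $M$, and $d_{2}\in h+\AS(K)$ independent from $h$ over $M$, with the same type over $M$; the independence theorem then glues $d_{1},d_{2}$ into one $d$ satisfying both coset conditions. The naive choice $d_{1}=g+\AS(w_{1})$ with $w_{1}$ generic runs into the central difficulty: one needs $\AS(w_{1})$ to be an additive generic independent from $g$, which is equivalent to $\AS(K)$ having bounded index --- exactly what is to be proved. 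Breaking this circularity is the main obstacle, and it is where the multiplicative structure of the field must enter: using the scaling identity $\AS(cy)=c^{p}\AS(y)+\AS(c)\,y$ with $c\in K^{\times}$ a multiplicative generic independent from $g,h$, one can manufacture the required independent representatives of the relevant $\AS(K)$-cosets without presupposing that $\AS(K)$ is generic. Once $\AS(x)=g-h$ is solved for every independent generic pair, $(K^{+})^{00}\subseteq\AS(K)$ follows, and with it the desired bound on the index.

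Finally, I would stress why the Baldwin--Saxl argument of Lemma~\ref{lem:BaldwinSaxl} cannot simply be substituted. Applied to the uniformly definable family of additive subgroups $S_{a}=a^{-1}\AS(K)=\{t:\exists x\,(\AS(x)=at)\}$ for $a\in K^{\times}$ --- whose total intersection is trivial as soon as $\AS$ fails to be onto --- the chain condition available in a merely simple theory controls finite subintersections only up to finite index, and a direct cardinality count shows this is entirely consistent with $[K^{+}:\AS(K)]$ being unbounded. It is precisely the independence theorem, a feature of simple rather than merely dependent theories, that has to replace the Baldwin--Saxl dichotomy here.
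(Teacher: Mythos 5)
This theorem is one the paper \emph{quotes} from \cite{KaScWa} without proof, so there is no internal argument to compare against; the comparison below is with the actual proof in the cited source.

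Your first reduction is correct and matches the source: Artin--Schreier extensions correspond to one-dimensional $\Ff_{p}$-subspaces of $K^{+}/\AS\left(K\right)$, so it suffices to show that $\left[K^{+}:\AS\left(K\right)\right]$ is bounded, and this in turn follows if the differences of independent realizations of a generic type of $K^{+}$ over a model $M$ all lie in $\AS\left(K\right)$ (by the stabilizer theorem for generic types in simple theories). The problem is that you never actually prove that step. You correctly identify the crux --- one needs a realization of a fixed type lying in $g+\AS\left(K\right)$ and independent from $g$, i.e.\ one needs $\AS\left(w\right)$ to be an additive generic --- and then you assert, with no construction, that the scaling identity $\AS\left(cy\right)=c^{p}\AS\left(y\right)+\AS\left(c\right)y$ lets one ``manufacture the required independent representatives.'' That sentence is the entire mathematical content of the theorem, and as written it is a gesture, not an argument. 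The proposal therefore has a genuine gap exactly at the point where all the work lies.

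Moreover, your diagnosis of the obstacle is misdirected. The statement ``$\AS\left(w\right)$ is an additive generic whenever $w$ is'' does not have to be circumvented via the multiplicative structure of $K$: it is proved directly from the fact that $\ker\left(\AS\right)=\Ff_{p}\cap K$ is finite, so that $w$ and $\AS\left(w\right)$ are interalgebraic over the base, together with the standard fact (from the theory of groups in simple theories, cf.\ Wagner's \emph{Simple Theories}) that definable homomorphisms with finite kernel carry generics to generics. This is precisely how \cite{KaScWa} argue: $\AS\left(K\right)$ is a type definable subgroup of $K^{+}$ containing a generic element of $K^{+}$, and in a simple theory any type definable subgroup containing a generic has bounded index (its cosets inject into the set of global types that do not fork over $M$, which is a bounded set). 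The independence theorem enters only through this general machinery of generics; no explicit coset amalgamation and no use of the field multiplication is needed for the simple case (the multiplicative structure is what the \emph{dependent} case of \cite{KaScWa} requires). So the correct repair of your proof is not the scaling identity but the finite-kernel/interalgebraicity lemma you dismissed as circular.
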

But for the dependent case we only proved
\begin{thm}
\cite{KaScWa} For an infinite type definable field $K$ in a dependent
theory there are either unboundedly many Artin-Schreier extensions,
or none.
\end{thm}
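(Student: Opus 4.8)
The plan is to prove the contrapositive of the substantive direction: assuming $K$ has \emph{at least one} Artin--Schreier extension, I will show that it has unboundedly many. First I translate the problem into group theory. Writing $\AS$ also for the additive endomorphism $x\mapsto x^{p}-x$ of the type-definable abelian group $\left(K,+\right)$, its kernel is the finite group $\Ff_{p}$ and its image $\AS\left(K\right)$ is a subgroup; by Artin--Schreier theory the cyclic degree-$p$ extensions of $K$ correspond to the one-dimensional $\Ff_{p}$-subspaces of the quotient $K/\AS\left(K\right)$. Hence ``no extensions'' means precisely $\AS\left(K\right)=K$, and ``unboundedly many extensions'' means that $\dim_{\Ff_{p}}K/\AS\left(K\right)$ can be made arbitrarily large in suitable models, which is exactly the statement that the index $\left[\left(K,+\right):\AS\left(K\right)\right]$ is unbounded in the sense of Definition \ref{def:unbounded index}. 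So it suffices to prove: \emph{if this index is bounded, then $\AS$ is onto.}

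The key step is to show that the additive group is \emph{connected}, i.e. $\left(K,+\right)^{00}=\left(K,+\right)$. Since $\left(K,+\right)$ is a type-definable group in a dependent theory, $G^{00}:=\left(K,+\right)^{00}$ exists by Shelah's theorem; in particular it does not depend on the base set of parameters. I claim $G^{00}$ is an ideal of $K$. Indeed, for $c\in K^{\times}$ the multiplication map $m_{c}\colon x\mapsto cx$ is an automorphism of $\left(K,+\right)$ definable over $c$, so $m_{c}\left(G^{00}\right)$ is a subgroup type-definable over $c$ of the same (bounded) index as $G^{00}$; by minimality of $G_{c}^{00}=G^{00}$ this gives $G^{00}\subseteq m_{c}\left(G^{00}\right)$, and applying the same to $c^{-1}$ yields the reverse inclusion, so $cG^{00}=G^{00}$. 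Thus $G^{00}$ is an ideal of $K$, hence is either $0$ or all of $K$. It cannot be $0$: being of bounded index in the infinite type-definable group $\left(K,+\right)$, which has size $\left|\C\right|$, triviality of $G^{00}$ would force $K$ itself to be of bounded size, a contradiction. Therefore $G^{00}=\left(K,+\right)$.

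To finish I must see $\AS\left(K\right)$ as a bounded-index type-definable subgroup and invoke connectedness. The point that needs care is that $\AS\left(K\right)$ is type-definable \emph{over a small set}: writing $K=\bigcap_{i}X_{i}$ as a downward-directed intersection of $A$-definable sets, each $\left\{ y:\exists x\,\left(X_{i}\left(x\right)\wedge\AS\left(x\right)=y\right)\right\}$ is $A$-definable, and a compactness argument --- using that the fibres of $\AS$ are finite (at most $p$ roots of $X^{p}-X-y$) so that a directed intersection of nonempty fibres is nonempty --- shows that $\AS\left(K\right)$ equals the intersection of these sets, hence is type-definable over $A$. Now assume $\left[\left(K,+\right):\AS\left(K\right)\right]$ is bounded. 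Then $\AS\left(K\right)$ is a type-definable subgroup of $\left(K,+\right)$ of bounded index, so by minimality of $G^{00}$ we get $\AS\left(K\right)\supseteq G^{00}=\left(K,+\right)$; that is, $\AS$ is onto and $K$ has no Artin--Schreier extension, which is the desired dichotomy. I expect the main obstacle to be the connectedness step $\left(K,+\right)^{00}=\left(K,+\right)$ --- in particular the careful use of the base-independence of $G^{00}$ to run the ideal argument --- together with verifying type-definability of $\AS\left(K\right)$ over a small parameter set; the remaining bookkeeping (the Artin--Schreier correspondence and the passage between the index and the number of extensions) should be routine.
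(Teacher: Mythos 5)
Your proof is correct, and it is essentially the argument of the cited source: this theorem is quoted in the paper from \cite{KaScWa} without proof, and the proof there proceeds exactly as you do --- use Shelah's theorem that $G^{00}$ exists (is base-independent) to show via the multiplication-by-$c$ ideal trick that $\left(K,+\right)^{00}=K$, note that $\AS\left(K\right)$ is a type-definable subgroup over a small parameter set, and conclude that bounded index forces $\AS\left(K\right)\supseteq\left(K,+\right)^{00}=K$, i.e. surjectivity. No gaps of substance; your remarks on the Artin--Schreier correspondence and on type-definability of the image are the same routine verifications the original paper carries out.
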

from these two we conclude
\begin{cor}
If $T$ is stable (so it is both simple and dependent), then type
definable fields are AS closed.
\end{cor}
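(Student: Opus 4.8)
The plan is to deduce the statement directly from the two preceding theorems of \cite{KaScWa}, exploiting the fact that a stable theory is simultaneously simple and dependent. Fix a type definable field $K$ in $T$. I would first reduce to the essential case: we may assume $\operatorname{char}K=p>0$, since in characteristic $0$ Artin--Schreier extensions do not arise and the assertion is vacuous; and I would take $K$ to be infinite, in line with the infinite hypothesis of the dependent dichotomy on which the argument rests.

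First I would invoke simplicity: as $T$ is stable it is in particular simple, so by the simple case of \cite{KaScWa} the field $K$ has only boundedly many Artin--Schreier extensions. Next I would invoke dependence: as $T$ is stable it is in particular dependent, so by the second theorem above the infinite field $K$ has \emph{either} unboundedly many Artin--Schreier extensions \emph{or} none at all. Putting these two facts side by side, the ``unboundedly many'' alternative is ruled out by the bounded count coming from simplicity, so the only surviving possibility is that $K$ has no Artin--Schreier extensions.

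It then remains to translate ``$K$ has no Artin--Schreier extensions'' into ``$\AS$ is onto'', i.e.\ that $K$ is Artin--Schreier closed. This is precisely the content of the classical Artin--Schreier theorem quoted above: for each $a\in K$ the polynomial $\AS-a$ either has a root in $K$ or is irreducible, in which case a root generates a cyclic degree $p$ extension, i.e.\ an Artin--Schreier extension. Since these have been excluded for every $a$, each $\AS-a$ must have a root in $K$, which says exactly that $x\mapsto x^{p}-x$ is surjective on $K$, as required.

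The step I expect to require the most care is not any of the deductions themselves --- each is essentially a one-line application --- but ensuring that the two dichotomies speak about the same invariant. Concretely, one must check that ``boundedly/unboundedly many Artin--Schreier extensions'' is measured in a single consistent way in both theorems, for instance via the index of the image of $\AS$, equivalently $\left|K/\AS(K)\right|$, interpreted in the bounded/unbounded sense of Definition~\ref{def:unbounded index}, so that ``bounded'' genuinely contradicts ``unbounded'' and thereby forces the ``none'' alternative. Once this bookkeeping is pinned down, the corollary follows immediately.
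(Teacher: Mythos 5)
Your proof is correct and is essentially the paper's own (implicit) argument: the corollary is stated there as an immediate consequence of the same two theorems of \cite{KaScWa}, with simplicity giving boundedly many Artin--Schreier extensions, dependence giving the none-or-unboundedly-many dichotomy, and hence none. Your extra bookkeeping (restricting to characteristic $p>0$ and infinite $K$, and translating ``no Artin--Schreier extensions'' into surjectivity of $\AS$ via the classical Artin--Schreier theorem) is exactly the content the paper leaves tacit in its one-line deduction.
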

The following, then, is still open
\begin{question}
What about the dependent case? In other words, is it true that infinite
type definable fields in dependent theories are AS-closed?
\end{question}
Observing the proof of Theorem \ref{thm:NIPFieldsAS}, we see that
it is enough to find a number $n$, and $n+1$ algebraically independent
elements, $\left\langle a_{i}\left|\, i\leq n\right.\right\rangle $
in $k:=K^{p^{\infty}}$, such that $\bigcap_{i<n}a_{i}\AS\left(K\right)=\bigcap_{i\leq n}a_{i}\AS\left(K\right)$.
So the Baldwin-Saxl applies in the case where the field $K$ is definable.
If $K$ is type definable, we may want something similar. But what
can we prove?

A conjecture of Frank Wagner is the main motivation question
\begin{conjecture}
\label{conj:Wagner Conju}Suppose $T$ is dependent, then the following
holds
\begin{itemize}
\item [\smiley]Suppose $G$ is a type definable group. Suppose $p\left(x,y\right)$
is a type and $\left\langle a_{i}\left|\, i<\omega\right.\right\rangle $
is an indiscernible sequence such that $G_{i}=p\left(x,a_{i}\right)\leq G$.
Then there is some $n$, such that for all finite sets, $v\subseteq\omega$,
the intersection $\bigcap_{i\in v}G_{i}$ is equal to a sub-intersection
of size $n$.
\end{itemize}
\end{conjecture}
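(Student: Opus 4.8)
The plan is to argue by contradiction and first reduce the statement to the non-existence of arbitrarily large \emph{irredundant} configurations. Call a finite set $v\subseteq\omega$ irredundant if for every $j\in v$ one has $\bigcap_{i\in v}G_{i}\subsetneq\bigcap_{i\in v\setminus\{j\}}G_{i}$. A simple removal argument shows that the conclusion holds with bound $n$ if and only if every irredundant set has size at most $n$: given any finite $v$, delete indices that are redundant for the current intersection one at a time; each deletion preserves the intersection, so we terminate at an irredundant $w\subseteq v$ with $\bigcap_{i\in w}G_{i}=\bigcap_{i\in v}G_{i}$, and for an irredundant set every proper subset has strictly larger intersection. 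Hence, assuming the conclusion fails, there are irredundant sets of unbounded size, and each irredundant set of size $m$ yields a \emph{fan}: indices $i_{1}<\dots<i_{m}$ together with elements $h_{1},\dots,h_{m}$ such that $h_{k}\in G_{i_{l}}$ for $l\neq k$ and $h_{k}\notin G_{i_{k}}$.

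Next I would exploit, exactly as in Baldwin--Saxl (Lemma \ref{lem:BaldwinSaxl}), that the $G_{i}$ are genuine subgroups. Using indiscernibility of $\left\langle a_{i}\right\rangle $ I may transport each fan onto an initial segment of the sequence, and then for $F\subseteq\left\{ 1,\dots,m\right\} $ put $c_{F}=\prod_{k\in F}h_{k}$. Since every $G_{i_{l}}$ is a subgroup and $h_{k}\in G_{i_{l}}$ whenever $k\neq l$, one checks that $c_{F}\in G_{i_{l}}$ iff $l\notin F$. Thus the finite family $\left\{ G_{i_{1}},\dots,G_{i_{m}}\right\} $ is shattered by the elements $c_{F}$: for every $s\subseteq\left\{ 1,\dots,m\right\} $ there is a single element lying in exactly those $G_{i_{l}}$ with $l\in s$. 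This is the type-definable analogue of the Baldwin--Saxl independence witness, and it is available for all $m$.

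The remaining, and main, difficulty is to convert this shattering of the type-definable groups into the independence property for a single formula, thereby contradicting dependence. The point is that $c\in G_{i_{l}}$ means $\psi\left(c,a_{i_{l}}\right)$ for \emph{all} $\psi\in p$, whereas $c\notin G_{i_{l}}$ only yields $\neg\psi\left(c,a_{i_{l}}\right)$ for \emph{some} $\psi\in p$ that may depend both on $c$ and on $l$; the product trick controls membership in the groups $G_{i_{l}}$ but not in any fixed $\psi\left(\C,a_{i_{l}}\right)$, so the negative instances of the shattering are witnessed by a priori varying formulas. I would attempt to uniformize the witnessing formula by combining two ingredients: first, the fact that for each fixed $\psi\in p$ and each fixed $c$, dependence forces $\left\{ l:\models\psi\left(c,a_{i_{l}}\right)\right\} $ to have boundedly many alternations along the indiscernible sequence, so that each single $\psi$ can account for only boundedly many of the "drops" along a fan; second, a compactness and Ramsey-type extraction which, leveraging indiscernibility, pins down one formula $\psi_{0}\in p$ responsible for unboundedly many negative instances, contradicting its bounded alternation. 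When $p$ is a single formula (or finitely many) this last step is immediate by pigeonhole and the argument closes just as in Baldwin--Saxl. The genuinely type-definable case, where $p$ has infinitely many formulas, is exactly where the difficulty concentrates: with infinitely many candidate witnessing formulas a naive pigeonhole fails, and I expect the crux of the proof to be precisely the extraction that prevents the witnessing formula from varying without bound across the shattering---the step where one anticipates needing either a delicate use of indiscernibility or an extra uniformity of the theory beyond plain dependence.
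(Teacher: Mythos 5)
Your proposal cannot be completed, because the statement you are trying to prove is false --- in the paper it is only stated as a conjecture, and Section \ref{sec:An-example} refutes it. The paper constructs a \emph{stable} (hence dependent) theory $T$ in which Property A fails: let $V$ be the $\Ff_{2}$-vector space of finitely supported functions on finite subsets of $\omega$, let $G_{n}\leq V$ consist of the elements supported on $n$-element sets, $G_{\omega}=\prod_{n}G_{n}$, and $H_{n,m}=\left\{ \eta\in G_{\omega}\left|\,\eta\left(n\right)\mbox{ is supported on sets containing }m\right.\right\} $. The structure has $P=G_{\omega}$, $Q=\omega$ and binary relations $R_{n}$ interpreting membership in $H_{n,m}$, and one takes the type $p\left(x,y\right)=\left\{ R_{n}\left(x,y\right)\left|\, n<\omega\right.\right\} $. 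In an $\aleph_{1}$-saturated model $N$, for any $m$ and any distinct parameters $\alpha_{0},\ldots,\alpha_{m}$ from $Q^{N}$ (in particular along an indiscernible sequence), the intersection $\bigcap_{i\leq m}p\left(N,\alpha_{i}\right)$ is strictly smaller than every sub-intersection of size $m$: an element $\eta$ with $\eta\left(m\right)$ the characteristic function of $\left\{ \alpha_{0},\ldots,\alpha_{m-1}\right\} $ lies in $\bigcap_{i<m}p\left(N,\alpha_{i}\right)$ but fails $R_{m}\left(x,\alpha_{m}\right)$. The paper then proves $T$ is stable (via $\kappa$-resplendent models), so no bound $n$ as in the conjecture can exist even under stability.

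Your proposal in fact locates the unfixable point precisely. The reductions you carry out are sound: the irredundancy argument, and the Baldwin--Saxl product trick showing that a fan of size $m$ yields elements $c_{F}$ with $c_{F}\in G_{i_{l}}$ iff $l\notin F$. But shattering a family of \emph{type-definable} sets is not a violation of NIP, and the final step you flag as the crux --- extracting a single formula $\psi_{0}\in p$ responsible for unboundedly many negative instances --- is exactly what the counterexample rules out. There, the formula witnessing the drop at the $\left(m+1\right)$-st group is $R_{m}$, which changes with the size of the fan; each fixed $R_{n}$ obeys the alternation bound you cite (the whole theory is stable), but the bounds are not uniform over the infinitely many formulas of $p$, and no pigeonhole or Ramsey extraction can make them so. Your closing guess that one needs ``an extra uniformity of the theory beyond plain dependence'' is exactly how the paper resolves matters: Property A does hold for strongly$^{2}$ dependent theories (Corollary \ref{cor:OpenQuestionStrong2}) and for theories of bounded dp-rank (Lemma \ref{lem:bounded dp rank case}), while the case of strongly dependent theories is left open.
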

Let refer to $\smiley$ as \emph{Property A} (of a theory $T$) for
the rest of the paper. So we have
\begin{fact}
If Property A is true for a theory $T$, then type definable fields
are Artin-Schreier closed.
\end{fact}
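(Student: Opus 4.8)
The plan is to let Property~A supply exactly the chain condition isolated in the paragraph preceding Conjecture~\ref{conj:Wagner Conju}, and then to invoke the proof of Theorem~\ref{thm:NIPFieldsAS}. So fix an infinite type definable field $K$ of characteristic $p>0$ in a dependent theory $T$ for which Property~A holds, and work inside $\C$. By that observation it is enough to produce a number $n$ together with $n+1$ algebraically independent elements $a_{0},\ldots,a_{n}\in k:=K^{p^{\infty}}$ satisfying $\bigcap_{i<n}a_{i}\AS(K)=\bigcap_{i\le n}a_{i}\AS(K)$; everything reduces to manufacturing such a configuration, and the engine is Property~A applied to the additive group of $K$.

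First I would set up the group-theoretic picture. The additive group $G=(K,+)$ is a type definable abelian group, $\AS$ is an additive endomorphism of $G$, so $\AS(K)$ is an additive subgroup, and for $a\in K^{\times}$ multiplication by $a$ is an additive automorphism of $G$; hence each $a\AS(K)$ is again an additive subgroup of $G$. This family is moreover \emph{uniformly} type definable: the set $\{(a\cdot(z^{p}-z),a):a,z\in K\}$ is the image of the type definable set $K\times K$ under a $\emptyset$-definable map, and images of type definable sets under definable maps are type definable in the monster (a one-line compactness argument). So it is cut out by some $p(x,y)$ with $p(\C,a)=a\AS(K)$ for every $a\in K$ --- precisely the input shape of Property~A.

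Next I would produce the indiscernible sequence. Assuming $k$ contains an infinite algebraically independent set $\langle b_{i}:i<\omega\rangle$ over $\Ff_{p}$, I would extract, by Ramsey and compactness (the Erd\H{o}s--Rado extraction already used in Definition~\ref{def:unbounded index}), an indiscernible sequence $\langle a_{i}:i<\omega\rangle$ realizing its Ehrenfeucht--Mostowski type. Since the conditions ``$a_{i}\in k$'' and ``$P(a_{i_{0}},\ldots,a_{i_{m}})\ne0$'' (for each nonzero $P$ over $\Ff_{p}$) are requirements on increasing tuples holding throughout $\langle b_{i}\rangle$, they survive in $\langle a_{i}\rangle$; thus the $a_{i}$ lie in $k$, stay algebraically independent, and are in particular nonzero, so $G_{i}:=p(\C,a_{i})=a_{i}\AS(K)\le G$. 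Applying Property~A to $G$, the type $p(x,y)$ and $\langle a_{i}\rangle$ yields an $n$ for which $\bigcap_{i\le n}a_{i}\AS(K)$ equals a sub-intersection of size $n$, i.e. $\bigcap_{i\le n,\,i\ne j}a_{i}\AS(K)\subseteq a_{j}\AS(K)$ for some $j\le n$. Permuting the (interchangeable, since algebraically independent) elements so that $a_{j}$ becomes the last one gives $n+1$ algebraically independent elements of $k$ with $\bigcap_{i<n}a_{i}\AS(K)=\bigcap_{i\le n}a_{i}\AS(K)$, and the observation then delivers that $\AS$ is onto.

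The hard part is not the final deduction but ensuring the hypotheses of Property~A are genuinely met, and in particular the standing assumption that $k=K^{p^{\infty}}$ carries infinitely many algebraically independent elements over $\Ff_{p}$. This can fail in principle (the perfect core of an infinite field may have small transcendence degree), and disposing of that case is exactly the reduction carried out in the proof of Theorem~\ref{thm:NIPFieldsAS}, which I would borrow rather than redo. Once one is in the infinite transcendence degree situation, the uniform type definability of $\{a\AS(K):a\in K\}$, the extraction preserving algebraic independence, and the relabelling are all routine, so the entire content of the Fact is the single application of Property~A.
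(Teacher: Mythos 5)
Your proposal is correct and is essentially the paper's own (implicit) argument: the Fact carries no separate proof precisely because Property A, applied to the uniformly type definable family of subgroups $a_{i}\AS\left(K\right)$ of $\left(K,+\right)$ indexed by an indiscernible sequence of algebraically independent elements of $k=K^{p^{\infty}}$, delivers exactly the configuration that the observation preceding Conjecture \ref{conj:Wagner Conju} declares sufficient. The details you fill in --- type definability of $a\AS\left(K\right)$ in the monster via compactness, EM-extraction preserving membership in $k$ and algebraic independence, the permutation placing the redundant group last, and borrowing from the proof of Theorem \ref{thm:NIPFieldsAS} the reduction that guarantees infinitely many algebraically independent elements of $k$ --- are all sound and match what the paper leaves to the reader.
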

In Section \ref{sec:Strongly^{2}  dependent theories}, we deal with
strongly$^{2}$ dependent theories (this is a much stronger condition
than merely dependence), and among other things, prove that Property
A is true for them. 

In Section \ref{sec:Baldwin-Saxl-type-lemmas}, we give some generalizations
and variants of Baldwin-Saxl for type definable groups in dependent
and strongly dependent theories (which we define below). One of them
is joint work with Frank Wagner. We prove that Property A holds for
theories with bounded dp-rank.

In Section \ref{sec:An-example}, we provide a counterexample that
shows that property A does not hold in stable theories, so Conjecture
\ref{conj:Wagner Conju} as it is stated is false. 
\begin{question}
Does Property A hold for strongly dependent theories?
\end{question}

\section{\label{sec:Strongly^{2}  dependent theories}Strongly$^{2}$ dependent
theories}
\begin{notation}
We call an array of elements (or tuples) $\left\langle a_{i,j}\left|\, i,j<\omega\right.\right\rangle $
an \emph{indiscernible array} over $A$ if for $i_{0}<\omega$, the
$i_{0}$-row $\left\langle a_{i_{0},j}\left|\, j<\omega\right.\right\rangle $
is indiscernible over the rest of the sequence ($\left\{ a_{i,j}\left|\, i\neq i_{0},i,j<\omega\right.\right\} $)
and $A$, i.e. when the rows are mutually indiscernible. \end{notation}
\begin{defn}
\label{def:strongly2 dependent}A theory $T$ is said to be \uline{not}\emph{
strongly$^{2}$ dependent} if there exists a sequence of formulas
$\left\langle \varphi_{i}\left(x,y_{i},z_{i}\right)\left|\, i<\omega\right.\right\rangle $,
an array $\left\langle a_{i,j}\left|\, i,j<\omega\right.\right\rangle $
and $b_{k}\in\left\{ a_{i,j}\left|\, i<k,j<\omega\right.\right\} $
such that 
\begin{itemize}
\item The array $\left\langle a_{i,j}\left|\, i,j<\omega\right.\right\rangle $
is an indiscernible array (over $\emptyset$). 
\item The set $\left\{ \varphi_{i}\left(x,a_{i,0},b_{i}\right)\land\neg\varphi_{i}\left(x,a_{i,1},b_{i}\right)\left|\, i<\omega\right.\right\} $
is consistent.
\end{itemize}
So $T$ is \emph{strongly$^{2}$ dependent} when this configuration
does not exist.

Note that the roles of $i$ and $j$ are not symmetric.
\end{defn}
(In the definition above, $x,z_{i},y_{i}$ can be tuples, the length
of $z_{i}$ and $y_{i}$ may depend on $i$).

This definition was introduced and discussed in \cite{Sh863} and
\cite{Sh783}.
\begin{rem}
\label{rem:singleton}By \cite[Claim 2.8]{Sh863}, we may assume in
the definition above that $x$ is a singleton.\end{rem}
\begin{prop}
\label{prop:noInfiniteChain}Suppose $T$ is strongly$^{2}$ dependent,
then it is impossible to have a sequence of type definable groups
$\left\langle G_{i}\left|\, i<\omega\right.\right\rangle $ such that
$G_{i+1}\leq G_{i}$ and $\left[G_{i}:G_{i+1}\right]=\infty$.\end{prop}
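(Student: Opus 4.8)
The plan is to derive a contradiction by building, from an infinite descending chain of type definable groups with unbounded successive indices, exactly the forbidden configuration in Definition \ref{def:strongly2 dependent}. The key observation is that each condition $\left[G_{i}:G_{i+1}\right]=\infty$ hands us, via the equivalent formulation in Definition \ref{def:unbounded index}, an indiscernible sequence witnessing unbounded index. First I would fix the type definable groups $G_{i}=\Sigma_{i}\left(x\right)$ and write each $G_{i+1}$ inside $G_{i}$ using a type. For each $i$ I would extract an indiscernible sequence $\left\langle c_{i,j}\left|\, j<\omega\right.\right\rangle $ with $c_{i,j}\in G_{i}$ and $c_{i,j}\cdot c_{i,j'}^{-1}\notin G_{i+1}$ for $j<j'$. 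The intention is to arrange these sequences as the rows of an indiscernible array and to read off, from the relation $c_{i,0}\cdot c_{i,1}^{-1}\notin G_{i+1}$, a formula $\varphi_{i}$ separating a point inside $G_{i}$ from one outside $G_{i+1}$.

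The central technical device will be to pass to mutually indiscernible rows. The raw sequences $\left\langle c_{i,j}\left|\, j<\omega\right.\right\rangle $ for different $i$ need not interact well, so I would use an extraction argument (Ramsey plus compactness, or the standard mutual-indiscernibility lemma) to replace them by an array $\left\langle a_{i,j}\left|\, i,j<\omega\right.\right\rangle $ that is indiscernible in the array sense while preserving the essential containments: $a_{i,j}\in G_{i}$ and successive elements of row $i$ differ by something outside $G_{i+1}$. Here the nesting $G_{i+1}\leq G_{i}$ is what lets me take the parameters $b_{i}$ defining the relevant portion of $G_{i+1}$ from earlier rows of the array, matching the requirement $b_{k}\in\left\{ a_{i,j}\left|\, i<k,j<\omega\right.\right\}$; concretely, membership in $G_{i+1}$ should be witnessable by a formula whose parameters come from the first $i+1$ rows, which is the delicate bookkeeping point.

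The formula $\varphi_{i}\left(x,y_{i},z_{i}\right)$ would assert that $x$ lies in $G_{i}$ and that $x\cdot y_{i}^{-1}$ lies in the relevant piece of $G_{i+1}$, with $z_{i}$ supplying the parameters of that piece. Evaluating at $y_{i}=a_{i,0}$ versus $y_{i}=a_{i,1}$, the difference $a_{i,0}\cdot a_{i,1}^{-1}\notin G_{i+1}$ forces $\varphi_{i}\left(x,a_{i,0},b_{i}\right)\wedge\neg\varphi_{i}\left(x,a_{i,1},b_{i}\right)$ to be satisfiable by a single common $x$, namely an element lying in all the $G_{i}$ and congruent to $a_{i,0}$ modulo $G_{i+1}$ for every $i$ simultaneously. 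Consistency of the infinite conjunction is exactly where I would invoke compactness together with the fact that the $G_{i}$ are nested, so that the finite approximations can be satisfied by descending through the chain.

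The hard part will be the simultaneous consistency of the set $\left\{ \varphi_{i}\left(x,a_{i,0},b_{i}\right)\wedge\neg\varphi_{i}\left(x,a_{i,1},b_{i}\right)\left|\, i<\omega\right.\right\}$ while keeping the array genuinely indiscernible and the parameters $b_{i}$ correctly positioned within earlier rows. In particular, one must ensure that the extraction to mutual indiscernibility does not destroy the property $a_{i,j}\cdot a_{i,j'}^{-1}\notin G_{i+1}$, and that a single realization $x$ can be chosen compatibly across all levels. I expect that the nesting $G_{i+1}\leq G_{i}$ is precisely what makes this possible: choosing $x$ in $\bigcap_{i}G_{i}$ and then adjusting within each successive quotient, using that $a_{i,0}$ and $a_{i,1}$ are distinct modulo $G_{i+1}$, should yield a realization by a coherent compactness argument, and the verification that this matches Definition \ref{def:strongly2 dependent} then contradicts the assumption that $T$ is strongly$^{2}$ dependent.
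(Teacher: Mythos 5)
Your overall skeleton --- extracting per-level indiscernible sequences from $\left[G_{i}:G_{i+1}\right]=\infty$, making the rows mutually indiscernible, and reading off formulas witnessing the failure of strong$^{2}$ dependence --- is exactly the paper's strategy. But the step you yourself flag as ``the hard part'' is where the proposal genuinely breaks, and the repair is not a ``coherent compactness argument'' but a specific algebraic device you are missing. Your conditions ask for a single $x$ with $x\cdot a_{i,0}^{-1}$ in (a piece of) $G_{i+1}$ for every $i$, i.e.\ $x$ congruent to $a_{i,0}$ modulo $G_{i+1}$ simultaneously. At the level of the groups themselves this system is already inconsistent for $i=0,1$: by row-indiscernibility all $a_{0,j}$ have the same type, so if $a_{0,0}\in G_{1}$ then $a_{0,0}\cdot a_{0,1}^{-1}\in G_{1}$, a contradiction; hence $a_{0,0}\notin G_{1}$, so the coset $G_{1}\cdot a_{0,0}$ is disjoint from $G_{1}$, while the $i=1$ condition forces $x\in G_{2}\cdot a_{1,0}\subseteq G_{1}$. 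Your fallback of choosing $x\in\bigcap_{i}G_{i}$ is worse, since such an $x$ is congruent to $1$, not to $a_{i,0}$, modulo each $G_{i+1}$. More structurally, no witness built only from the other rows can ever work: if $x$ and $b_{1}$ involve no elements of row $1$, then indiscernibility of row $1$ over the rest gives $\left(x,b_{1},a_{1,0}\right)\equiv\left(x,b_{1},a_{1,1}\right)$, so $\varphi_{1}\left(x,a_{1,0},b_{1}\right)$ and $\varphi_{1}\left(x,a_{1,1},b_{1}\right)$ cannot differ. The paper's solution is a cumulative translation: take $b_{i}=a_{0,0}\cdot\ldots\cdot a_{i-1,0}$ (a product of one element from each earlier row, a legitimate instantiation of $z_{i}$ by a tuple from $\left\{ a_{k,j}\left|\, k<i,\, j<\omega\right.\right\} $) and set $\varphi_{i}\left(x,y,z\right)=\xi_{i}\left(y^{-1}\cdot z^{-1}\cdot x\right)$. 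Then the positive conditions describe the \emph{nested} cosets $b_{1}G_{1}\supseteq b_{2}G_{2}\supseteq\ldots$, and the witness for the first $i_{0}$ conditions is simply the element $c=b_{i_{0}}$; no descent through the chain is needed, and none is available.

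There is a second gap: since $G_{i+1}$ is only type-definable, ``$x\cdot y^{-1}$ lies in the relevant piece of $G_{i+1}$'' does not deliver the negative instances. Non-membership $a_{i,1}^{-1}\cdot a_{i,0}\notin G_{i+1}$ only means that \emph{some} formula of the type fails on it, and there is no reason that the particular piece you chose fails on $x\cdot a_{i,1}^{-1}$ for your witness $x$. The paper needs the approximate-subgroup trick: choose $\psi_{i}$ in the type defining $G_{i+1}$ with $\neg\psi_{i}\left(a_{i,1}^{-1}\cdot a_{i,0}\right)$, then by compactness a further $\xi_{i}$ in the type such that $\xi_{i}\left(a\right)\wedge\xi_{i}\left(b\right)$ implies $\psi_{i}\left(a\cdot b^{-1}\right)$; it is this $\xi_{i}$ that enters $\varphi_{i}$, and the negative instance then follows because $\xi_{i}$ holding at both $a_{i,1}^{-1}\cdot b_{i}^{-1}\cdot c$ and $a_{i,0}^{-1}\cdot b_{i}^{-1}\cdot c$ would force $\psi_{i}\left(a_{i,1}^{-1}\cdot a_{i,0}\right)$. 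Finally, your reading of $z_{i}$ as carrying ``the parameters defining $G_{i+1}$'' conflicts with Definition \ref{def:strongly2 dependent}: those parameters are in general not elements of earlier rows of the array. The paper removes them by assuming without loss of generality that all the groups are type-definable over $\emptyset$, which is precisely what frees $b_{i}$ to play the translation role described above.
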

\begin{proof}
Without loss of generality, we shall assume that all groups are definable
over $\emptyset$. Suppose there is such a sequence $\left\langle G_{i}\left|\, i<\omega\right.\right\rangle $.
Let $\left\langle a_{i,j}\left|\, i,j<\omega\right.\right\rangle $
be an indiscernible array such that for each $i<\omega$, the sequence
$\left\langle a_{i,j}\left|\, j<\omega\right.\right\rangle $ is a
sequence from $G_{i}$ (in $\C$) such that $a_{i,j'}^{-1}\cdot a_{i,j}\notin G_{i+1}$
for all $j<j'<\omega$. We can find such an array because of our assumption
and Ramsey (for more detail, see the proof of Corollary \ref{cor:OpenQuestionStrong2}
below).

For each $i<\omega$, let $\psi_{i}\left(x\right)$ be in the type
defining $G_{i+1}$ such that $\neg\psi_{i}\left(a_{i,j'}^{-1}\cdot a_{i,j}\right)$.
By compactness, there is a formula $\xi_{i}\left(x\right)$ in the
type defining $G_{i+1}$ such that for all $a,b\in\C$, if $\xi_{i}\left(a\right)\land\xi_{i}\left(b\right)$
then $\psi_{i}\left(a\cdot b^{-1}\right)$ holds. Let $\varphi_{i}\left(x,y,z\right)=\xi_{i}\left(y^{-1}\cdot z^{-1}\cdot x\right)$.
For $i<\omega$, let $b_{i}=a_{0,0}\cdot\ldots\cdot a_{i-1,0}$ (so
$b_{0}=1$). 

Let us check that the set $\left\{ \varphi_{i}\left(x,a_{i,0},b_{i}\right)\land\neg\varphi_{i}\left(x,a_{i,1},b_{i}\right)\left|\, i<\omega\right.\right\} $
is consistent. Let $i_{0}<\omega$, and let $c=b_{i_{0}}$. Then for
$i<i_{0}$, $\varphi_{i}\left(c,a_{i,0},b_{i}\right)$ holds iff $\xi_{i}\left(a_{i+1,0}\cdot\ldots\cdot a_{i_{0}-1,0}\right)$
but the product $a_{i+1,0}\cdot\ldots\cdot a_{i_{0}-1,0}$ is an element
of $G_{i+1}$ and $\xi_{i}$ is in the type defining $G_{i+1}$, so
$\varphi_{i}\left(c,a_{i,0},b_{i}\right)$ holds. Now, $\varphi_{i}\left(c,a_{i,1},b_{i}\right)$
holds iff $\xi_{i}\left(a_{i,1}^{-1}a_{i,0}\cdot\ldots\cdot a_{i_{0}-1,0}\right)$.
However, since $\xi_{i}\left(a_{i+1,0}\cdot\ldots\cdot a_{i_{0}-1,0}\right)$
holds, by choice of $\xi_{i}$ we have 
\[
\psi_{i}\left(\left[a_{i,1}^{-1}a_{i,0}\cdot\ldots\cdot a_{i_{0}-1,0}\right]\cdot\left[a_{i+1,0}\cdot\ldots\cdot a_{i_{0}-1,0}\right]^{-1}\right)
\]
 i.e. $\psi_{i}\left(a_{i,1}^{-1}\cdot a_{i,0}\right)$ holds ---
contradiction. \end{proof}
\begin{rem}
It is well known (see \cite{PoizatSatbleGroups}) that in superstable
theories the same proposition hold.
\end{rem}
The next corollary already appeared in \cite[Claim 0.1]{Sh863} with
definable groups instead of type definable (with proof already in
\cite[Claim 3.10]{Sh783}). 
\begin{cor}
\label{cor:finiteIndexRange} Assume $T$ is strongly$^{2}$ dependent.
If $G$ is a type definable group and $h$ is a definable homomorphism
$h:G\to G$ with finite kernel then $h$ is almost onto $G$, i.e.,
the index $\left[G:h\left(G\right)\right]$ is bounded (i.e. $<\infty$).
If $G$ is definable, then the index must be finite.\end{cor}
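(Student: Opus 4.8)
The plan is to derive a contradiction from the assumption that $\left[G:h\left(G\right)\right]=\infty$ by producing an infinite descending chain of type definable groups, \emph{all} of whose successive indices are unbounded, and then invoking Proposition \ref{prop:noInfiniteChain}. The chain is the orbit of $G$ under iterating $h$: set $G_{0}=G$ and $G_{n+1}=h\left(G_{n}\right)=h^{n+1}\left(G\right)$. Since $h\left(G\right)\le G$, applying $h^{n}$ gives $G_{n+1}=h^{n}\left(h\left(G\right)\right)\le h^{n}\left(G\right)=G_{n}$, so $\left\langle G_{n}\mid n<\omega\right\rangle $ is descending; and since each $h^{n}$ is a definable homomorphism and the image of a type definable group under a definable homomorphism is again type definable, each $G_{n}$ is a type definable group.

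The heart of the argument is to propagate unboundedness of the index down the chain using the finiteness of the kernel. Fix $K=\ker h$, which is normal in $G$, and put $m=\left|K\right|<\omega$. Restricting $h$ to $G_{n-1}$ gives a \emph{surjective} homomorphism $h\colon G_{n-1}\to G_{n}$ whose kernel $K_{n-1}=K\cap G_{n-1}$ is normal in $G_{n-1}$ and finite of size $\le m$. The preimage of the subgroup $G_{n+1}=h\left(G_{n}\right)$ under this surjection is exactly $G_{n}K_{n-1}$, so the homomorphism theorem gives $\left[G_{n}:G_{n+1}\right]=\left[G_{n-1}:G_{n}K_{n-1}\right]$; combining this with the tower law $\left[G_{n-1}:G_{n}\right]=\left[G_{n-1}:G_{n}K_{n-1}\right]\cdot\left[G_{n}K_{n-1}:G_{n}\right]$ together with the bound $\left[G_{n}K_{n-1}:G_{n}\right]\le m$ yields
\[
\left[G_{n}:G_{n+1}\right]\ \ge\ \left[G_{n-1}:G_{n}\right]/m .
\]
Interpreting indices in $\C$ (Definition \ref{def:unbounded index}), if $\left[G_{n-1}:G_{n}\right]$ is unbounded, that is, equals $\left|\C\right|$, then a finite factor cannot absorb an infinite product, so $\left[G_{n}:G_{n+1}\right]=\left[G_{n-1}:G_{n}K_{n-1}\right]$ is again $\left|\C\right|$, hence unbounded. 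By induction from the assumed $\left[G_{0}:G_{1}\right]=\left[G:h\left(G\right)\right]=\infty$, every $\left[G_{n}:G_{n+1}\right]$ is unbounded, contradicting Proposition \ref{prop:noInfiniteChain}. Therefore $\left[G:h\left(G\right)\right]$ is bounded. For the last sentence, when $G$ is definable so is its image $h\left(G\right)$, and for definable groups bounded index coincides with finite index by the remark in Definition \ref{def:unbounded index}, so $\left[G:h\left(G\right)\right]<\omega$.

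I expect the main obstacle to be the bookkeeping around the index rather than anything deep: one must correctly identify $h^{-1}\left(G_{n+1}\right)\cap G_{n-1}$ as $G_{n}K_{n-1}$ so that the homomorphism theorem applies at each stage, and then argue carefully that passing from the $\left|\C\right|$-sized index $\left[G_{n-1}:G_{n}\right]$ to $\left[G_{n}:G_{n+1}\right]$ through the finite factor of size $\le m$ preserves unboundedness. The auxiliary claim that the iterated images $h^{n}\left(G\right)$ are type definable is routine---the image of a small-parameter type definable set under a definable map is type definable, since the induced continuous map of the compact type space has closed image---but it must be in place for Proposition \ref{prop:noInfiniteChain} to apply to the chain.
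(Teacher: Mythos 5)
Your proof is correct and follows the same overall strategy as the paper's: both consider the chain $G\geq h\left(G\right)\geq h\left(h\left(G\right)\right)\geq\cdots$ of iterated images, apply Proposition \ref{prop:noInfiniteChain} to it, and use finiteness of $\ker h$ to transfer (un)boundedness of the index between consecutive links of the chain. The only genuine difference is how that transfer step is established. The paper isolates it as a stand-alone claim, $\left[G:h\left(G\right)\right]+\aleph_{0}=\left[h\left(G\right):h\left(h\left(G\right)\right)\right]+\aleph_{0}$, and proves it combinatorially: given $\kappa$ many coset representatives of $h\left(G\right)$ in $G$ while $\left[h\left(G\right):h\left(h\left(G\right)\right)\right]<\kappa$, pigeonhole puts infinitely many images $h\left(a_{i}\right)$ into a single coset of $h\left(h\left(G\right)\right)$, pairs are colored by the kernel element witnessing $a_{i}^{-1}a_{j}\in h\left(G\right)\cdot\ker h$, and Ramsey produces two representatives lying in the same coset of $h\left(G\right)$ --- a contradiction. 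You instead argue purely algebraically: restrict $h$ to a surjection $G_{n-1}\to G_{n}$ with finite kernel $K_{n-1}$, identify the preimage of $G_{n+1}$ as the subgroup $G_{n}K_{n-1}$ (normality of $\ker h$ is what makes this a subgroup), and combine the correspondence theorem with the tower law to get $\left[G_{n-1}:G_{n}\right]=\left[G_{n}:G_{n+1}\right]\cdot\left[G_{n}K_{n-1}:G_{n}\right]$ with the last factor at most $\left|\ker h\right|$, so unboundedness propagates down the chain; this is the exact contrapositive of the paper's direction. Your route is more elementary (no Ramsey, only standard group theory), while the paper's formulation buys a cleaner reusable statement --- equality of the two indices as cardinals modulo $\aleph_{0}$. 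Your handling of the side issues (type definability of the iterated images via saturation, and the definable case via definability of $h\left(G\right)$ together with the bounded-iff-finite remark in Definition \ref{def:unbounded index}) is also correct and matches what the paper leaves implicit.
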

\begin{proof}
Consider the sequence of groups $\left\langle h^{\left(i\right)}\left(G\right)\left|\, i<\omega\right.\right\rangle $
(i.e. $G$, $h\left(G\right)$, $h\left(h\left(G\right)\right)$,
etc.). By Proposition \ref{prop:noInfiniteChain}, for some $i<\omega$,
$\left[h^{\left(i\right)}\left(G\right):h^{\left(i+1\right)}\left(G\right)\right]<\infty$.
Now the Corollary easily follows from
\begin{claim*}
If $G$ is a group, $h:G\to G$ a homomorphism with finite kernel,
then $\left[G:h\left(G\right)\right]+\aleph_{0}=\left[h\left(G\right):h\left(h\left(G\right)\right)\right]+\aleph_{0}$. \end{claim*}
\begin{proof}
\renewcommand{\qedsymbol}{}(of claim) Let $H=h\left(G\right)$. Easily,
one has $\left[H:h\left(H\right)\right]\leq\left[G:H\right]$. 

We may assume that $\left[G:H\right]$ is infinite. Let $\ker\left(h\right)=\left\{ g_{0},\ldots,g_{k-1}\right\} $.
Suppose that $\left[G:H\right]=\kappa$ but $\left[H:h\left(H\right)\right]<\kappa$.
So let $\left\{ a_{i}\left|\, i<\kappa\right.\right\} \subseteq G$
are such that $a_{i}^{-1}\cdot a_{j}\notin h\left(G\right)$ for $i\neq j$.
So there must be some coset $a\cdot h\left(H\right)$ in $H$ such
that for infinitely many $i<\kappa$, $h\left(a_{i}\right)\in a\cdot h\left(H\right)$.
Let us enumerate them as $\left\langle a_{i}\left|\, i<\omega\right.\right\rangle $.
So for $i<j<\omega$, let $C\left(a_{i},a_{j}\right)$ be the least
number $l<k$ such that there is some $y\in h\left(G\right)$ with
$y^{-1}a_{i}^{-1}a_{j}=g_{l}$. By Ramsey, we may assume that $C\left(a_{i},a_{j}\right)$
is constant. Now pick $i_{1}<i_{2}<j<\omega$. So we have $y^{-1}a_{i_{1}}^{-1}a_{j}=\left(y'\right)^{-1}a_{i_{2}}^{-1}a_{j}$,
so $y^{-1}a_{i_{1}}^{-1}=\left(y'\right)^{-1}a_{i_{2}}^{-1}$ and
hence $a_{i_{1}}^{-1}a_{i_{2}}=y\left(y'\right)^{-1}\in h\left(G\right)$
--- contradiction.
\end{proof}
\end{proof}
\begin{cor}
\label{cor:finiteIndex}If $K$ is a strongly$^{2}$ dependent field,
(or even a type definable field in a strongly$^{2}$ dependent theory)
then for all $n<\omega$, $\left[K^{\times}:\left(K^{\times}\right)^{n}\right]<\infty$. 
\end{cor}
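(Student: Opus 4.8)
The plan is to read off this corollary as an immediate instance of Corollary \ref{cor:finiteIndexRange}, applied to the multiplicative group together with its $n$-th power endomorphism. So the whole task reduces to packaging the relevant field-theoretic data into the hypotheses of that corollary.

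First I would observe that $G:=K^{\times}=K\setminus\left\{ 0\right\} $, with multiplication as the group operation, is a type definable group: its defining type is obtained from the type defining $K$ by adjoining the single formula $x\neq0$, and multiplication is definable since it comes from the field language. (If $K$ is the home sort of a strongly$^{2}$ dependent field, then $K^{\times}$ is outright definable.) Next, I would consider the map $h:K^{\times}\to K^{\times}$ given by $h\left(x\right)=x^{n}$. Since $x^{n}$ is a term in the field language, its graph is definable, and since $K^{\times}$ is abelian we have $h\left(xy\right)=h\left(x\right)h\left(y\right)$, so $h$ is a definable endomorphism of $G$. Its kernel is the set of $n$-th roots of unity, $\left\{ x\in K^{\times}\left|\, x^{n}=1\right.\right\} $, which is the zero set in $K$ of the polynomial $X^{n}-1$; in any field this has at most $n$ elements, so $\ker\left(h\right)$ is finite. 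Finally, the image $h\left(K^{\times}\right)$ is exactly $\left(K^{\times}\right)^{n}$, the subgroup of $n$-th powers.

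With these identifications in hand, Corollary \ref{cor:finiteIndexRange} applies verbatim: a definable homomorphism $h:G\to G$ with finite kernel of a type definable group in a strongly$^{2}$ dependent theory is almost onto, i.e. $\left[G:h\left(G\right)\right]<\infty$. Substituting $G=K^{\times}$ and $h\left(G\right)=\left(K^{\times}\right)^{n}$ yields $\left[K^{\times}:\left(K^{\times}\right)^{n}\right]<\infty$, and in the definable case the corollary further gives that this index is in fact finite.

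There is no serious obstacle here; the only points requiring any verification are that $K^{\times}$ is (type) definable and that the $n$-th power map has finite kernel, both of which are elementary. The entire mathematical content of the statement is carried by Corollary \ref{cor:finiteIndexRange}, which is itself reduced via Proposition \ref{prop:noInfiniteChain} to the absence of infinite chains of unbounded index. Thus the corollary is best presented simply as a worked example of the preceding machinery rather than as an argument with its own independent difficulty.
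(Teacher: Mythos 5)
Your proposal is correct and is exactly the argument the paper intends: Corollary \ref{cor:finiteIndex} is stated without proof precisely because it is the instance of Corollary \ref{cor:finiteIndexRange} with $G=K^{\times}$ and $h\left(x\right)=x^{n}$, whose kernel (the $n$-th roots of unity) has at most $n$ elements. Your verifications that $K^{\times}$ is type definable, that $h$ is a definable endomorphism, and that $h\left(K^{\times}\right)=\left(K^{\times}\right)^{n}$ are the only points needing checking, and you handle them correctly.
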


\begin{cor}
\label{cor:OpenQuestionStrong2}Let $G$ be type definable group in
a strongly$^{2}$ dependent theory $T$. \end{cor}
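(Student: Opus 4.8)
The plan is to establish Property A for the given family, i.e.\ to show that for $G_{i}=p\left(x,a_{i}\right)\leq G$ attached to an indiscernible sequence $\left\langle a_{i}\left|\, i<\omega\right.\right\rangle $ there is a bound $n$ so that every finite intersection $\bigcap_{i\in v}G_{i}$ equals a subintersection of at most $n$ of the $G_{i}$. First I would reformulate the conclusion as a statement about irredundancy: call a finite $v\subseteq\omega$ \emph{irredundant} if $\bigcap_{i\in v\setminus\left\{ j\right\} }G_{i}\supsetneq\bigcap_{i\in v}G_{i}$ for every $j\in v$. Then Property A for this family is equivalent to the existence of a uniform bound on the size of irredundant sets, since any $v$ can be shrunk to an irredundant $w\subseteq v$ with the same intersection. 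So it suffices to bound $\left|v\right|$ over all irredundant $v$.

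I would argue by contraposition and build the configuration forbidden by Definition \ref{def:strongly2 dependent}. Assuming irredundant sets of unbounded size, a standard Ramsey/compactness argument (exactly the reduction foreshadowed in the proof of Proposition \ref{prop:noInfiniteChain}) produces an indiscernible sequence $\left\langle a_{i}\left|\, i<\omega\right.\right\rangle $ for which \emph{every} finite subset is irredundant; equivalently, writing $H_{n}=\bigcap_{i<n}G_{i}$, for each $i$ one has $\bigcap_{j\neq i}G_{j}\not\subseteq G_{i}$, and in particular $H_{n}\not\subseteq G_{n}$, so the chain $\left\langle H_{n}\right\rangle $ is strictly decreasing and each $G_{n}$ genuinely contributes a new coset over its predecessors. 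By Remark \ref{rem:singleton} I may assume $x$ is a singleton.

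The heart of the proof is to convert this irredundancy into the two–column array of Definition \ref{def:strongly2 dependent}. Mirroring the construction in Proposition \ref{prop:noInfiniteChain}, I would thicken the sequence into a mutually indiscernible array $\left\langle a_{i,j}\left|\, i,j<\omega\right.\right\rangle $ whose $i$-th row is an indiscernible sequence of realisations of the irredundancy at level $i$ (so that, relative to a suitable external witness, $a_{i,0}$ and $a_{i,1}$ fall on opposite sides of the new coset cut out by $G_{i}$), take $b_{i}=a_{0,0}\cdot\ldots\cdot a_{i-1,0}$ to be the product of the column-$0$ entries of the earlier rows, and let $\varphi_{i}\left(x,y,z\right)=\xi_{i}\left(y^{-1}z^{-1}x\right)$ for a formula $\xi_{i}$ in the type defining the relevant subgroup. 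The witness $x=b_{i_{0}}$ should then verify $\varphi_{i}\left(x,a_{i,0},b_{i}\right)\land\neg\varphi_{i}\left(x,a_{i,1},b_{i}\right)$ for all $i<i_{0}$ by the same telescoping–product computation as in Proposition \ref{prop:noInfiniteChain}, and compactness upgrades this to consistency of the whole set, contradicting strongly$^{2}$ dependence.

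The step I expect to be the main obstacle is this array construction. Unlike in Proposition \ref{prop:noInfiniteChain}, where the hypothesis $\left[G_{i}:G_{i+1}\right]=\infty$ directly supplies, inside each $G_{i}$, an indiscernible row splitting across infinitely many cosets, here the hypothesis is only irredundancy, which a priori allows the successive indices $\left[H_{n}:H_{n+1}\right]$ to be finite. I would therefore split into two cases. If infinitely many of these indices are infinite, then passing to that subsequence already yields a chain of type definable groups with every index $\infty$ (since $H_{n_{k+1}}\subseteq H_{n_{k}+1}$ forces $\left[H_{n_{k}}:H_{n_{k+1}}\right]=\infty$), so Proposition \ref{prop:noInfiniteChain} applies at once. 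The remaining case --- finite indices together with strict descent --- is where the genuinely two-dimensional strength of strongly$^{2}$ dependence is needed: one must arrange the irredundancy witnesses across the rows so that, although each individual index is finite, the single external witness $b_{i_{0}}$ still separates column $0$ from column $1$ in every row simultaneously. Making this separation uniform in $i$ while keeping the rows mutually indiscernible is the delicate point, and is exactly what the careful array bookkeeping is designed to accomplish.
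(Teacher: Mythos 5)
Your reduction of Property A to bounding the size of irredundant sets is correct, and your first case --- infinitely many of the indices $\left[H_{n}:H_{n+1}\right]$ unbounded, handled by passing to a subsequence and invoking Proposition \ref{prop:noInfiniteChain} --- is exactly how the paper's proof begins (it uses Proposition \ref{prop:noInfiniteChain} to produce some $i_{0}$ with $\left[H_{i_{0}}:H_{i_{0}+1}\right]<\infty$). The gap is your second case, which you yourself flag as ``the delicate point'' but never actually prove: when $\left[H_{i_{0}}:H_{i_{0}+1}\right]$ is bounded, the array strategy you propose cannot be carried out. By the equivalence in Definition \ref{def:unbounded index}, bounded index means precisely that there is \emph{no} indiscernible sequence inside $H_{i_{0}}$ whose pairwise quotients avoid $H_{i_{0}+1}$ --- but such a sequence is exactly what a row of your array must be for the telescoping computation of Proposition \ref{prop:noInfiniteChain} to produce the pattern $\varphi_{i}\left(x,a_{i,0},b_{i}\right)\land\neg\varphi_{i}\left(x,a_{i,1},b_{i}\right)$. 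No amount of ``careful array bookkeeping'' or mutual indiscernibility can route around this; deferring the construction to it is not an argument, and there is good reason to think none exists along these lines.

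The paper closes this case by a completely different, and much softer, idea that uses no further strongly$^{2}$ dependence. With $H_{i_{0}}=\bigcap_{j<i_{0}}G_{j}$ and $H_{i_{0},r}=H_{i_{0}}\cap G_{r}$ for $r\geq i_{0}$ (so $H_{i_{0}+1}=H_{i_{0},i_{0}}$), indiscernibility transfers the bound: $\left[H_{i_{0}}:H_{i_{0},r}\right]<\infty$ for every $r$. Hence every $H_{i_{0},r}$ contains the minimal bounded-index type definable subgroup $H_{i_{0}}^{00}$ (over the parameters of the sequence). If the $H_{i_{0},r}$ were not all equal, then by indiscernibility they would be pairwise distinct, and by compactness the sequence can be stretched to arbitrary length $\kappa$, yielding $\kappa$ pairwise distinct subgroups sandwiched between $H_{i_{0}}^{00}$ and $H_{i_{0}}$; that forces $H_{i_{0}}/H_{i_{0}}^{00}$ to be unbounded, contradicting the boundedness of $\left[H_{i_{0}}:H_{i_{0}}^{00}\right]$. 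Therefore $H_{i_{0}+1}\subseteq G_{r}$ for all $r$, so $\bigcap_{i<\omega}G_{i}=\bigcap_{i<i_{0}+1}G_{i}$, contradicting your strict descent and giving part (1) directly; part (2) then follows by the Ramsey extraction you already sketched. This counting argument via $H_{i_{0}}^{00}$ --- not a second application of the array definition --- is the missing idea in your proposal.
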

\begin{enumerate}
\item Given a family of uniformly type definable subgroups $\left\{ p\left(x,a_{i}\right)\left|\, i<\omega\right.\right\} $
such that $\left\langle a_{i}\left|\, i<\omega\right.\right\rangle $
is an indiscernible sequence, there is some $n<\omega$ such that
$\bigcap_{j<\omega}p\left(\C,a_{j}\right)=\bigcap_{j<n}p\left(\C,a_{j}\right)$.
In particular, $T$ has Property A. 
\item Given a family of uniformly definable subgroups $\left\{ \varphi\left(x,c\right)\left|\, c\in C\right.\right\} $,
the intersection 
\[
\bigcap_{c\in C}\varphi\left(\C,c\right)
\]
 is already a finite one. \end{enumerate}
\begin{proof}
(1) Assume without loss of generality that $G$ is defined over $\emptyset$.
Let $G_{i}=p\left(\C,a_{i}\right)$, and let $H_{i}=\bigcap_{j<i}G_{i}$.
By Proposition \ref{prop:noInfiniteChain}, for some $i_{0}<\omega$,
$\left[H_{i_{0}}:H_{i_{0}+1}\right]<\infty$. For $r\geq i_{0}$,
let $H_{i_{0},r}=\bigcap_{j<i_{0}}G_{j}\cap G_{r}$ (so $H_{i_{0}+1}=H_{i_{0},i_{0}}$).
By indiscerniblity, $\left[H_{i_{0}}:H_{i_{0},r}\right]<\infty$.
This means (by definition of $H_{i_{0}}^{00}$) that $H_{i_{0}}^{00}\leq H_{i_{0},r}$
for all $r>i_{0}$. However, if $H_{i_{0},i_{0}}\neq H_{i_{0},r}$
for some $i_{0}<r<\omega$, then by indiscerniblity $H_{i_{0},r}\neq H_{i_{0},r'}$
for all $i_{0}\leq r<r'$, and by compactness and indiscerniblity
we may increase the length $\omega$ of the sequence to any cardinality
$\kappa$, so that the size of $H_{i_{0}}/H_{i_{0}}^{00}$ is unbounded
--- contradiction. This means that $H_{i_{0}+1}\subseteq G_{r}$ for
all $r>i_{0}$, and so $\bigcap_{i<\omega}G_{i}=\bigcap_{i<i_{0}+1}G_{i}$.

(2) Assume not. Then we can find a sequence $\left\langle c_{i}\left|\, i<\omega\right.\right\rangle $
of element of $C$ such that $\bigcap_{j<i}\varphi\left(\C,c_{j}\right)\neq\bigcap_{j<i+1}\varphi\left(\C,c_{j}\right)$.
By Ramsey, we can extract an indiscernible sequence $\left\langle a_{i}\left|\, i<\omega\right.\right\rangle $
such that for any $n$, and any formula $\psi\left(x_{0},\ldots,x_{n-1}\right)$,
if $\psi\left(a_{0},\ldots,a_{n-1}\right)$ holds then there are $i_{0}<\ldots<i_{n-1}$
such that $\psi\left(c_{i_{0}},\ldots,c_{i_{n-1}}\right)$ holds.
In particular, $\varphi\left(\C,a_{i}\right)$ defines a subgroup
of $G$ and $\bigcap_{j<i}\varphi\left(\C,a_{j}\right)\neq\bigcap_{j<i+1}\varphi\left(\C,a_{j}\right)$.
But this contradicts (1). 
\end{proof}
As further applications, we show that some theories are not strongly$^{2}$
dependent.
\begin{example}
\label{exa:OAG}Suppose $\left\langle G,+,<\right\rangle $ is an
ordered abelian group. Then its theory $Th\left(G,+,0,<\right)$ is
not strongly$^{2}$ dependent.\end{example}
\begin{proof}
We work in the monster model $\C$. Let $G_{d}=\left\{ x\in\C\left|\,\forall n<\omega\left(n\mid x\right)\right.\right\} $,
so it is a divisible ordered subgroup of $G$. Note that since $G$
is ordered, it is torsion free, so really it is a $\Qq$-vector space.
Define a descending sequence of infinite type definable groups $G_{d}^{i}\leq G_{d}$
for $i<\omega$ such that $\left[G_{d}^{i}:G_{d}^{i+1}\right]=\infty$.
This contradicts Proposition \ref{prop:noInfiniteChain}. Let $G_{d}^{0}=G_{d}$,
and suppose we have chosen $G_{d}^{i}$. Let $a_{i}\in G_{d}^{i}$
be positive. Let $G_{d}^{i+1}=G_{d}^{i}\cap\bigcap_{n<\omega}\left(-a_{i}/n,a_{i}/n\right)$.
This is a type definable subgroup of $G_{d}^{i}$. The sequence $\left\langle k\cdot a_{i}\left|\, k<\omega\right.\right\rangle $
satisfies $\left(k-l\right)\cdot a_{i}\notin\left(-a_{i}/2,a_{i}/2\right)$
for any $k\neq l$, and by Ramsey (as in the proof of Corollary \ref{cor:OpenQuestionStrong2}
(2)) we get $\left[G_{d}^{i}:G_{d}^{i+1}\right]=\infty$. 
\end{proof}

\begin{example}
The theory $Th\left(\Rr,+,\cdot,0,1\right)$ is strongly dependent
(it is even o-minimal, so dp-minimal --- see Definitions \ref{def:dp-rank}
and \ref{def:strongly-dep} below). However it is not strongly$^{2}$
dependent. 
\end{example}

\begin{example}
The theory $Th\left(\Qq_{p},+,\cdot,0,1\right)$ of the p-adics is
strongly dependent (it is also dp-minimal), but not strongly$^{2}$
dependent: The valuation group $\left(\Zz,+,0,<\right)$ is interpretable.
\end{example}
Adding some structure to an algebraically closed field, we can easily
get a strongly$^{2}$ dependent theory.
\begin{example}
Let $L=L_{\mbox{rings}}\cup\left\{ P,<\right\} $ where $L_{\mbox{rings}}$
is the language of rings $\left\{ +,\cdot,0,1\right\} $, $P$ is
a unary predicate and $<$ is a binary relation symbol. Let $K$ be
$\Cc$ (so is an algebraically closed field), and let $P\subseteq K$
be a countable set of algebraically independent elements, enumerated
as $\left\{ a_{i}\left|\, i\in\Qq\right.\right\} $. Let $M=\left\langle K,P,<\right\rangle $
where $a<^{M}b$ iff $a,b\in P$ and $a=a_{i},\, b=a_{j}$ where $i<j$.
Let $T=Th\left(M\right)$.\end{example}
\begin{claim}
$T$ is strongly$^{2}$ dependent.\end{claim}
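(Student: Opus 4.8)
The plan is to analyze the structure $M = \langle K, P, < \rangle$ by reducing its theory to well-understood pieces. The key observation is that the field part and the linear order on $P$ interact only trivially, since $P$ consists of algebraically independent elements: the field $K$ is just an algebraically closed field (whose theory $\mathrm{ACF}_0$ is strongly dependent, indeed $\aleph_0$-stable, hence stable), and $P$ carries a dense linear order without endpoints. I would first establish a quantifier elimination or at least a precise description of types in $T$, showing that $T$ is essentially the ``free amalgam'' of $\mathrm{ACF}_0$ and $\mathrm{DLO}$ in the following sense: the type of a tuple is determined by the field-theoretic (algebraic) relations among its coordinates together with the restriction of the linear order to those coordinates lying in $P$. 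Because the $a_i$ are algebraically independent, no nontrivial algebraic relation can constrain the ordering, and vice versa, so these two kinds of data are independent.

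With this description in hand, the strategy is to verify strong$^2$ dependence directly against Definition \ref{def:strongly2 dependent}, using Remark \ref{rem:singleton} to assume $x$ is a singleton. Suppose toward contradiction we have formulas $\langle \varphi_i(x, y_i, z_i) \mid i < \omega \rangle$, an indiscernible array $\langle a_{i,j} \mid i,j < \omega \rangle$, and parameters $b_k$ from earlier rows, witnessing the bad configuration. I would use the quantifier-elimination result to decompose each $\varphi_i$ into a Boolean combination of a field-part formula and an order-part formula. The core of the argument is then that strong$^2$ dependence is known (or easily shown) for each component theory separately: $\mathrm{ACF}_0$ is strongly$^2$ dependent (it is stable of finite rank, and one can cite or reprove the fact that such theories have no such array configuration), and $\mathrm{DLO}$ is dp-minimal hence strongly$^2$ dependent as well. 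The task reduces to showing that the strong$^2$ dependence of the two parts transfers to their independent combination.

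The hard part will be this transfer step — proving that the ``orthogonal'' combination of two strongly$^2$ dependent theories (with disjoint relational interaction of the kind present here) remains strongly$^2$ dependent. Unlike dependence, strong$^2$ dependence is not obviously preserved under such constructions, and the definition's asymmetry between the row index $i$ and column index $j$ makes naive coordinate-splitting delicate. I would handle it by taking the witnessing array and, for each consistent formula in the set $\{\varphi_i(x,a_{i,0},b_i) \land \neg\varphi_i(x,a_{i,1},b_i)\}$, separating the realizing element's field-behavior from its $P$-membership behavior. Concretely, one shows that the single element $x$ can satisfy at most finitely much ``genuine'' splitting in each sort: any infinite consistent trace must live entirely in the field part or entirely in the order part, and then one derives a contradiction by invoking strong$^2$ dependence of whichever single component is responsible. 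The technical obstacle is bookkeeping the parameters $b_i$ (which may mix both sorts) across the mutually indiscernible rows; I expect to control this by passing to a sub-array on which the sort-decomposition of each $b_i$ stabilizes, using the mutual indiscernibility to freeze the relevant order-type and algebraic-type data.

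Finally, I would remark that an alternative, possibly cleaner route is to exhibit $T$ as interpretable in (or bi-interpretable with) a two-sorted structure with one sort an algebraically closed field and the other a dense linear order, with no further definable links; then strong$^2$ dependence follows from a general preservation theorem for such two-sorted ``independent'' unions, if one is available in the cited literature \cite{Sh863, Sh783}. Either way, the mathematical content is the same: the absence of algebraic relations among the $a_i$ is exactly what decouples the field and the order, and strong$^2$ dependence of $T$ is inherited from the strong$^2$ dependence of its two building blocks.
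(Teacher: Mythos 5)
Your overall strategy --- describe types as a field part plus an order part, check that algebraically closed fields and dense linear orders are each strongly$^{2}$ dependent, and transfer a bad array configuration to one of the two components --- is in outline the same as the paper's. But as written the proposal has concrete errors in exactly the places where the work lies. First, your description of types (the ``free amalgam'' claim) is false as stated: the type of a tuple is \emph{not} determined by the algebraic relations among its coordinates together with the order on those coordinates lying in $P$. Take $p_{1},p_{2}\in P$ and let $a=p_{1}+p_{2}$, and let $a'$ be any element with $a'\notin\acl\left(P\right)$. Both are singletons outside $P$, both have the same field type over $\emptyset$ (both transcendental), and both carry empty order data, yet $a$ satisfies $\exists y_{1}y_{2}\left(P\left(y_{1}\right)\land P\left(y_{2}\right)\land x=y_{1}+y_{2}\right)$ while $a'$ does not. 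This is precisely why the paper introduces the trace operator $\dcl^{P}$ and proves the subclaim that $\tp\left(a/A\right)$ is determined by the field type and the order type of $a$ \emph{augmented by the tuple $\left(Aa\right)^{P}\subseteq P$ of elements of $P$ over which $a$ is algebraic}. The same example refutes your alternative route via a two-sorted structure ``with no further definable links'' (the field and the order are linked through algebraicity over $P$), and it blocks any naive decomposition of formulas into Boolean combinations of a pure field formula and a pure order formula.

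Second, your justification that the components are strongly$^{2}$ dependent uses a false implication: dp-minimality does \emph{not} imply strong$^{2}$ dependence. The paper's own examples show this --- $\Th\left(\Rr,+,\cdot,0,1\right)$ is o-minimal, hence dp-minimal, yet not strongly$^{2}$ dependent, and likewise the $p$-adics. That DLO and ACF are strongly$^{2}$ dependent is true but needs a direct check of the array definition. Third, and most seriously, the transfer step --- which you correctly identify as the crux --- rests on the unjustified slogan that ``any infinite consistent trace must live entirely in the field part or entirely in the order part,'' which is not how the argument can go, since the witness $c$ and the parameters genuinely mix the sorts. The paper's resolution is quite different: (i) if $c\notin\acl\left(APa_{0,0}a_{0,1}\right)$, an automorphism fixing $cA$ pointwise and $P$ setwise sends $a_{0,0}$ to $a_{0,1}$, a contradiction, so $c\in\acl\left(APa_{0,0}a_{0,1}\right)$; (ii) absorb row $0$ into the base and replace $c$ by a tuple $c^{P}$ of elements of $P$; (iii) extend the rows to order type $\omega^{*}+\omega+\omega$ and attach to each entry its trace $a_{i,j}^{P}=\dcl^{P}\left(a_{i,j}B\right)$, producing parallel indiscernible arrays in the order sort and in the field sort; (iv) apply strong$^{2}$ dependence of DLO and of ACF separately to find a single row $i_{0}$ indiscernible in each sort over the appropriate parameters; and (v) recombine via the subclaim, which requires computing $\dcl^{P}\left(BCc\right)$ explicitly as $\bigcup\left\{ a_{i,j}^{P}\left|\, i<i_{0},j<\omega\right.\right\} \cup B^{P}\cup c$. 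None of this machinery appears in your outline, so what you have is a plan whose load-bearing steps are either missing or incorrect as stated.
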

\begin{proof}
Note that $T$ is axiomatizable by saying that the universe is an
algebraically closed field, $P$ is a subset of algebraically independent
elements and $<$ is a dense linear order on $P$ (to see this, take
two saturated models of the same size and show that they are isomorphic). 

Let us fix some terminology: 
\begin{itemize}
\item When we write $\acl$, we mean the algebraic closure in the field
sense. When we say basis, we mean a transcendental basis.
\item When we say that a set is independent / dependent over $A$ for some
set $A$, we mean that it is dependent / independent in the pregeometry
induced by $\cl\left(X\right)=\acl\left(AX\right)$.
\item $\dcl\left(X\right)$ stands for the definable closure of $X$. 
\end{itemize}
We work in a saturated model $\C$ of $T$. 

Suppose $X$ is some set. Let $X_{0}$ be some basis for $X$ over
$P$, and let $\dcl^{P}\left(X\right)$ be the set of $p\in P$ such
that there exists some minimal finite $P_{0}\subseteq P$ with $p\in P_{0}$
and some $x\in X$ such that $x\in\acl\left(P_{0}X_{0}\right)$. Note
that this set is contained in $\dcl\left(X\right)$ (since $P$ is
linearly ordered) and that it does not depend on the choice of $X_{0}$. 

Suppose $a$ is a finite tuple, and $A$ is a set. Let $A^{P}=\dcl^{P}\left(A\right)$. 

Let $\tp_{K}\left(a/A\right)$ the type of $a\concat\left(Aa\right)^{P}$
(considered as a tuple, ordered by $<^{\C}$) over $A\cup A^{P}$
in the field language, and $\tp_{P}\left(a/A\right)$ the type of
the tuple $\left(Aa\right)^{P}$ over $A^{P}$ in the order language.
\begin{subclaim*}
For finite tuples $a,b$ and a set $A$, $\tp\left(a/A\right)=\tp\left(b/A\right)$
iff $\tp_{P}\left(Aa/A\right)=\tp_{P}\left(Ab/A\right)$ and $\tp_{K}\left(a/A\right)=\tp_{K}\left(b/A\right)$.
In fact, in this case, there is an automorphism of the field $\acl\left(abAP\right)$
fixing $A$ pointwise and $P$ setwise taking $a$ to $b$. This automorphism
is an elementary map.\end{subclaim*}
\begin{proof}
Given that the $P$ and $K$ types are equal, it is easy to construct
an automorphism of $\acl\left(abAP\right)$ as above. First we construct
an automorphism of $\left\langle P,<\right\rangle $ that takes $a^{P}$
to $b^{P}$ and fixes $A^{P}$. We can extend this automorphism to
$A_{0}P$ where $A_{0}$ is a basis of $A$ over $P$. By definition
of $\dcl^{P}$, we can also extend it to $\acl\left(AP\right)$, fixing
$A$ pointwise. Let $a'\subseteq a$ be a basis for $a$ over $AP$,
and $b'\subseteq b$ a basis for $b$ over $AP$. By definition of
$\dcl^{P}$, $\left|a'\right|=\left|b'\right|$. This means we can
extend this automorphism to $\acl\left(aAP\right)$, taking it to
$\acl\left(bAP\right)$. Next extend this to an automorphism of $\acl\left(abAP\right)$
(possible since both $a$ and $b$ are finite). Now we can extend
this to an automorphism of $\C$ since it is algebraically closed.
Note that if $c\notin\acl\left(abAP\right)$, we can choose this automorphism
to fix $c$.
\end{proof}
Suppose that $\left\langle a_{i,j}\left|\, i,j<\omega\right.\right\rangle $
is an indiscernible array over a parameter set $A$ as in Definition
\ref{def:strongly2 dependent} and that $c$ is a singleton such that: 
\begin{itemize}
\item The sequence $I_{0}:=\left\langle a_{0,j}\left|\, j<\omega\right.\right\rangle $
is not indiscernible over $c$, and moreover $\tp\left(a_{0,0}/c\right)\neq\tp\left(a_{0,1}/c\right)$.
\item For $i>0$, the sequence $I_{i}:=\left\langle a_{i,j}\left|\, j<\omega\right.\right\rangle $
is not indiscernible over $c\cup\bigcup_{k<i}I_{k}\cup A$.
\end{itemize}
Suppose first that $c\notin\acl\left(APa_{0,0}a_{0,1}\right)$. Then,
by the proof of the first subclaim, we get a contradiction, since
there is an automorphism fixing $cA$ pointwise and $P$ setwise taking
$a_{0,0}$ to $a_{0,1}$. So $c\in\acl\left(APa_{0,0}a_{0,1}\right)$.
Increase the parameter set $A$ by adding the first row $\left\langle a_{0,j}\left|\, j<\omega\right.\right\rangle $.
So we may assume that $c\in\acl\left(AP\right)$. Choose a basis $A_{0}\subseteq A$
over $P$, and let $c^{P}\subseteq P$ the unique minimal tuple of
elements such that $c\in\acl\left(A_{0}c^{P}\right)$. Since $c\in\acl\left(Ac^{P}\right)$,
we may replace $c$ by $c^{P}$ and assume that $c$ is a tuple of
elements in $P$ (here we use the fact that if $I$ is indiscernible
over $Ac^{P}$ then it is also indiscernible over $\acl\left(Ac^{P}\right)$). 

Expand all the sequences to order type $\omega^{*}+\omega+\omega$.
Let $B=\bigcup\left\{ a_{i,j}\left|\, i<\omega,j<0\vee\omega\leq j\right.\right\} \cup A$.
For each $i<\omega$ and $0\leq j<\omega$, let $a_{i,j}^{P}$ be
$\dcl^{P}\left(a_{i,j}B\right)$ considered as a tuple ordered by
$<^{\C}$, and let $B^{P}=\dcl^{P}\left(B\right)$. Then $\left\langle a_{i,j}^{P}\left|\, i,j<\omega\right.\right\rangle $
is an indiscernible array over $B^{P}$ and $\left\langle a_{i,j}\concat a_{i,j}^{P}\left|\, i,j<\omega\right.\right\rangle $
is an indiscernible array over $B\cup B^{P}$. 

As both the theories of dense linear orders and algebraically closed
fields are strongly$^{2}$ dependent (this is easy to check), there
is some $i_{0}$ such that $\left\langle a_{i_{0},j}^{P}\left|\, j<\omega\right.\right\rangle $
is indiscernible over $cB^{P}\cup\left\{ a_{i,j}^{P}\left|\, i<i_{0},j<\omega\right.\right\} $
in the order language and $\left\langle a_{i_{0},j}\concat a_{i_{0},j}^{P}\left|\, j<\omega\right.\right\rangle $
is indiscernible over $cB\cup B^{P}\cup\left\{ a_{i,j}\concat a_{i,j}^{P}\left|\, i<i_{0},j<\omega\right.\right\} $
in the field language. 

Let $C=\bigcup\left\{ a_{i,j}\left|\, i<i_{0},j<\omega\right.\right\} $.
We must check that $\left\langle a_{i_{0},j}\left|\, j<\omega\right.\right\rangle $
is indiscernible over $BCc$. Let us show, for instance, that $\tp\left(a_{i_{0},0}/BCc\right)=\tp\left(a_{i_{0},1}/BCc\right)$.
For this we apply the subclaim. We claim that $\dcl^{P}\left(BCc\right)=\bigcup\left\{ a_{i,j}^{P}\left|\, i<i_{0},j<\omega\right.\right\} \cup B^{P}\cup c$.
Why? Choose some basis $D$ for $BC$ over $P$ such that $D$ contains
a basis for $B$ over $P$. If some element $x$ in $C$ is in $\acl\left(DP\right)$,
then by indiscerniblity, $x\in\acl\left(\left(a_{i,j}\cap D\right)\cup BP\right)$
for some $i,j$, which means that $x\in\acl\left(P\cup\left(\left(a_{i,j}B\right)\cap D\right)\right)$,
so the tuple from $P$ that witnesses this is already in $a_{i,j}^{P}$.
Similarly, $\dcl^{P}\left(a_{i_{0},j}BCc\right)=a_{i_{0},j}^{P}\cup\dcl^{P}\left(BC\right)\cup c$.
By the subclaim above, we are done.\end{proof}
\begin{rem}
With the same proof, one can show that if $T$ is strongly minimal,
and $P=\left\{ a_{i}\left|\, i<\omega\right.\right\} $ is an infinite
indiscernible set in $M\models T$ of cardinality $\aleph_{1}$, the
theory of the structure $\left\langle M,P,<\right\rangle $ where
$<$ is some dense linear order with no end points on $P$, is strongly$^{2}$
dependent. 
\end{rem}
We finish this section with the following conjecture:
\begin{conjecture}
All strongly$^{2}$ dependent groups are stable, i.e. if $G$ is a
group such that $Th\left(G,\cdot\right)$ is strongly$^{2}$ dependent,
then it is stable.
\end{conjecture}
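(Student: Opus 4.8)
The plan is to prove the contrapositive inside the class of dependent theories. Since strongly$^{2}$ dependence implies dependence, and since a theory is stable exactly when it is dependent and has no strict order property (instability of a formula is witnessed by either the independence property or the strict order property, so stable $\Leftrightarrow$ NIP $\wedge$ NSOP), it suffices to show that a \emph{dependent} pure group $\langle G,\cdot\rangle$ with the strict order property cannot be strongly$^{2}$ dependent. So I would fix such a $G$, assume toward a contradiction that $T=\Th(G,\cdot)$ is strongly$^{2}$ dependent, and aim to produce the forbidden array configuration of Definition \ref{def:strongly2 dependent}.

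The engine for the contradiction is Proposition \ref{prop:noInfiniteChain}: it is enough to manufacture an infinite descending chain $\langle G_{i}\mid i<\omega\rangle$ of type definable subgroups of $G$ with $[G_{i}:G_{i+1}]=\infty$ for every $i$. The template for how to do this is Example \ref{exa:OAG}, where the strict order of an ordered abelian group is used to carve out an archimedean-style nested family of infinite-index type definable subgroups (the $G_{d}^{i}$). My first step would therefore be to extract, from the strict order property, a definable (or type definable, on a definable set of tuples) infinite linear quasi-order, and then to \emph{transport} it onto the group: ideally, to show that an unstable dependent pure group already interprets an infinite ordered abelian group. Since the array configuration of Definition \ref{def:strongly2 dependent} is inherited by interpretable structures, interpreting an infinite ordered abelian group would immediately contradict Example \ref{exa:OAG}, finishing the proof.

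Concretely, I would first reduce to the abelian case by passing to a suitable definable section (an unstable group should carry an unstable abelian-like subquotient on which the order lives), and then use the order together with the group operation to define the nested subgroups directly, copying the construction of the $G_{d}^{i}$: pick a positive element $a_{i}$ in $G_{i}$ and let $G_{i+1}$ be the intersection of $G_{i}$ with the type definable ``infinitesimal-relative-to-$a_{i}$'' subgroup. The cosets $\langle k\cdot a_{i}\mid k<\omega\rangle$ would then witness infinite index exactly as in Example \ref{exa:OAG}, and Proposition \ref{prop:noInfiniteChain} would apply to give the contradiction.

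The hard part --- and the reason this is stated only as a conjecture --- is precisely the transport step. A bare definable linear order on a dependent pure group need not interact with multiplication at all, so there is no general mechanism forcing the order to yield subgroups, let alone an interpretable ordered abelian group. Establishing that every unstable dependent pure group interprets an infinite ordered abelian group (or otherwise carries an order compatible enough with $\cdot$ to build the descending chain) is an open structural problem about NIP groups, and it is exactly this compatibility between the order and the group operation that the approach above assumes but cannot in general supply.
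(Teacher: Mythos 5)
You should first note the status of this statement in the paper: it is an open conjecture, not a theorem. The authors offer no proof at all; they only remark that Example \ref{exa:OAG} and Corollary \ref{cor:OpenQuestionStrong2} ``show that this might be reasonable'', and relate it to Shelah's conjecture that strongly$^{2}$ dependent infinite fields are algebraically closed. So there is no paper proof to compare against, and your proposal must stand or fall on its own. Its outer reduction is sound as a strategy: strongly$^{2}$ dependence implies dependence; stable is equivalent to NIP together with no strict order property; strongly$^{2}$ dependence is inherited by interpretable structures (the paper itself uses this for the $p$-adics, via the interpretable value group $\left(\Zz,+,0,<\right)$); hence it would suffice to show that an unstable dependent pure group interprets an infinite ordered abelian group, or, more directly, to build a chain $\left\langle G_{i}\left|\, i<\omega\right.\right\rangle $ of type definable subgroups with $\left[G_{i}:G_{i+1}\right]=\infty$ and invoke Proposition \ref{prop:noInfiniteChain}. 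This is essentially the heuristic behind the authors' remark that the conjecture is plausible.

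The genuine gap is the one you yourself flag, and it is not a technical detail --- it is the entire content of the conjecture: the ``transport step''. The strict order property only yields a definable partial (pre)order with infinite chains on some definable set of tuples; nothing connects that order to the group operation. There is no known mechanism by which an unstable NIP pure group interprets an infinite ordered abelian group, or carries a definable order compatible enough with $\cdot$ to imitate the construction of the groups $G_{d}^{i}$ in Example \ref{exa:OAG} (one needs, for instance, the ``infinitesimal-relative-to-$a_{i}$'' set to actually be a subgroup, which uses compatibility of the order with addition in an essential way). Likewise, the preliminary ``pass to a suitable definable abelian section'' step is asserted, not proved: an unstable NIP group in the pure group language need not visibly carry any such section. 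So what you have written is a reasonable plan of attack plus an accurate admission that its key step is an open structural problem about NIP groups; it is not a proof, and indeed no proof is known --- which is exactly why the paper records this statement as a conjecture rather than a theorem.
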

Example \ref{exa:OAG} and Corollary \ref{cor:OpenQuestionStrong2}
show that this might be reasonable. This is related to the conjecture
of Shelah in \cite{Sh863} that all strongly$^{2}$ dependent infinite
fields are algebraically closed.

\section{\label{sec:Baldwin-Saxl-type-lemmas}Baldwin-Saxl type lemmas}

The next lemma is the type definable version of the Baldwin-Saxl Lemma
(see Lemma \ref{lem:BaldwinSaxl}). But first,
\begin{notation}
\label{not:Size of a type}If $p\left(x,y\right)$ is a partial type,
then $\left|p\right|$ is the size of the set of formulas $\varphi\left(x,z_{1},\ldots,z_{n}\right)$
(where $z_{i}$ is a singleton) such that for some finite tuple $y_{1},\ldots,y_{n}\in y$,
$\varphi\left(x,y_{1},\ldots,y_{n}\right)\in p$. In this sense, the
size of any type is bounded by $\left|T\right|$. \end{notation}
\begin{lem}
\label{lem:BaldSaxType}Suppose $G$ is a type definable group in
a dependent theory $T$.
\begin{enumerate}
\item If $p_{i}\left(x,y_{i}\right)$ is a type of for $i<\kappa$ ($y_{i}$
may be an infinite tuple), $\left|\bigcup p_{i}\right|<\kappa$, and
$\left\langle c_{i}\left|\, i<\kappa\right.\right\rangle $ is a sequence
of tuples such that $p_{i}\left(\C,c_{i}\right)$ is a subgroup of
$G$, then for some $i_{0}<\kappa$, $\bigcap_{i<\kappa}p_{i}\left(\C,c_{i}\right)=\bigcap_{i<\kappa,i\neq i_{0}}p_{i}\left(\C,c_{i}\right)$.
\item In particular, Given a family of uniformly type definable subgroups,
defined by $p\left(x,y\right)$, and $C$ of size $\left|p\right|^{+}$,
there is some $c_{0}\in C$ such that $\bigcap_{c\neq c_{0}}p\left(\C,c\right)=\bigcap_{c\in C}p\left(\C,c\right)$.
\item In particular, if $\left\{ G_{i}\left|\, i<\left|T\right|^{+}\right.\right\} $
is a family of type definable subgroups (defined with parameters),
then there is some $i_{0}<\left|T\right|^{+}$ such that $\bigcap G_{i}=\bigcap_{i\neq i_{0}}G_{i}$.
\end{enumerate}
\end{lem}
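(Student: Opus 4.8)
The plan is to prove part (1) by contradiction and then to read off (2) and (3) as bookkeeping consequences. So suppose that no index is redundant: for every $i_{0}<\kappa$ there is $g_{i_{0}}\in\bigcap_{i\neq i_{0}}p_{i}(\C,c_{i})$ with $g_{i_{0}}\notin p_{i_{0}}(\C,c_{i_{0}})$. Writing $H_{i}=p_{i}(\C,c_{i})$, this says $g_{i_{0}}\in H_{i}$ for all $i\neq i_{0}$ while $g_{i_{0}}\notin H_{i_{0}}$. Since $g_{i_{0}}\notin H_{i_{0}}=p_{i_{0}}(\C,c_{i_{0}})$, a single formula of the type already fails on it: there is a formula $\varphi_{i_{0}}(x,z)$ occurring in $p_{i_{0}}$ (with $z$ a finite subtuple of the variables $y_{i_{0}}$) and a corresponding finite subtuple $\bar{c}_{i_{0}}$ of $c_{i_{0}}$ with $\models\neg\varphi_{i_{0}}(g_{i_{0}},\bar{c}_{i_{0}})$.

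The first key step, and the only place where the hypothesis $\left|\bigcup p_{i}\right|<\kappa$ is used, is a pigeonhole on the witnessing formula. By Notation \ref{not:Size of a type} there are fewer than $\kappa$ possible shapes $\varphi(x,z)$, so there is a set $I\subseteq\kappa$ of size $\kappa$ and one fixed formula $\varphi$ with $\varphi_{i_{0}}=\varphi$ for all $i_{0}\in I$. For $i,i_{0}\in I$ I then read off the configuration $\neg\varphi(g_{i_{0}},\bar{c}_{i_{0}})$, while for $i\neq i_{0}$ the instance $\varphi(x,\bar{c}_{i})$ lies in $p_{i}$ and $g_{i_{0}}\in H_{i}$, so $\varphi(g_{i_{0}},\bar{c}_{i})$ holds; that is, $\varphi(g_{i_{0}},\bar{c}_{i})$ holds if and only if $i\neq i_{0}$.

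The second step is to upgrade this ``complement of a matching'' pattern to the full independence property by exploiting the group operation, exactly as in the proof of Lemma \ref{lem:BaldwinSaxl}: for finite $s\subseteq I$ form $g_{s}=\prod_{i\in s}g_{i}$ and test membership in $H_{j}$. Because each $H_{j}$ is genuinely a subgroup, the elementary computation of Lemma \ref{lem:BaldwinSaxl} gives $g_{s}\in H_{j}$ if and only if $j\notin s$, so the products $\{g_{s}\}$ shatter the family $\{H_{j}\}$. I expect this to be the main obstacle: the shattering lives at the level of the type-definable groups $H_{j}$, whereas the fixed formula $\varphi(x,\bar{c}_{j})$ only over-approximates $H_{j}$ (it contains $H_{j}$ but need not be a subgroup, nor be invariant under multiplication by $H_{j}$), so one cannot simply replace ``$g_{s}\in H_{j}$'' by ``$\varphi(g_{s},\bar{c}_{j})$'' in the direction $j\in s$. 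To get around this I would, before pigeonholing, replace each witnessing $\varphi_{i_{0}}$ by a multiplicatively compatible formula of the type, using the halving device from the proof of Proposition \ref{prop:noInfiniteChain} to find $\xi\in p_{i_{0}}$ with $\xi(a)\wedge\xi(b)\Rightarrow\varphi_{i_{0}}(a\cdot b^{-1},\bar{c}_{i_{0}})$, and then arrange the product argument so that the single failure $\neg\varphi(g_{j},\bar{c}_{j})$ propagates to $\neg\varphi(g_{s},\bar{c}_{j})$ whenever $j\in s$. Once a single formula detects the full pattern $\varphi(g_{s},\bar{c}_{j})\Leftrightarrow j\notin s$ on arbitrarily large finite blocks, the parameters $\bar{c}_{j}$ are shattered by the $g_{s}$, contradicting the dependence of $T$.

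Finally, (2) and (3) require no further ideas. Part (2) is the case $\kappa=\left|p\right|^{+}$ of (1) with all $p_{i}=p$: then $\left|\bigcup p_{i}\right|=\left|p\right|<\kappa$, and the redundant index supplied by (1) is the asserted $c_{0}$. For (3), a family of $\left|T\right|^{+}$ type-definable subgroups, each defined by some type over parameters, has $\left|\bigcup p_{i}\right|\leq\left|T\right|<\left|T\right|^{+}=\kappa$, since the number of formula shapes is at most $\left|T\right|$ no matter how many types occur; thus (1) applies and yields the redundant $i_{0}$.
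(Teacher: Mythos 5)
Your proposal has the same skeleton as the paper's proof --- negate the conclusion to get witnesses $g_{i}\in\bigcap_{j\neq i}H_{j}\setminus H_{i}$, use $\left|\bigcup p_{i}\right|<\kappa$ to pigeonhole the witnessing formulas down to a single $\varphi$, then run the Baldwin--Saxl product trick to contradict dependence --- and you correctly identified the crux: a formula chosen merely because it fails at $g_{i}$ says nothing about the products $g_{s}$. But your repair does not close that gap. The one-sided device $\xi(a)\wedge\xi(b)\Rightarrow\varphi_{i_{0}}(a\cdot b^{-1},\bar{c}_{i_{0}})$ only yields that $\xi$ fails on the single coset $g_{i_{0}}H_{i_{0}}$: from $\neg\varphi_{i_{0}}(g_{i_{0}},\bar{c}_{i_{0}})$ and $\xi(b)$ for $b\in H_{i_{0}}$ one gets $\neg\xi(g_{i_{0}}b)$, and nothing more. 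In the product $g_{s}=\prod_{i\in s}g_{i}$, however, an index $j$ in the \emph{interior} of $s$ contributes $g_{s}=d_{1}\cdot g_{j}\cdot d_{2}$ with $d_{1},d_{2}\in H_{j}$ both nontrivial, so $g_{s}$ lies in the double coset $H_{j}g_{j}H_{j}$, which in a non-abelian $G$ need not meet $g_{j}H_{j}$ (that would require the conjugate $g_{j}^{-1}d_{1}g_{j}$ to lie in $H_{j}$). No ordering of the product places every $j\in s$ at an end simultaneously, so ``arranging the product argument'' cannot make the single failure propagate: what your $\xi$ actually certifies is only $\neg\xi(g_{s},\bar{c}_{j})$ for $j=\min s$, together with $\xi(g_{s},\bar{c}_{j})$ for $j\notin s$, and that pattern falls short of the independence property. (In the abelian case your $\xi$ would suffice, but the lemma makes no such assumption.)

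The paper's choice of formula is two-sided from the start: since $H_{i}$ is a subgroup and $g_{i}\notin H_{i}$, the double coset $H_{i}g_{i}H_{i}$ is disjoint from $H_{i}$, so the partial type $p_{i}(x_{1},c_{i})\cup p_{i}(x_{2},c_{i})\cup p_{i}(x_{1}\cdot g_{i}\cdot x_{2},c_{i})$ is inconsistent, and compactness yields a single $\varphi_{i}$ with $\varphi_{i}(x,c_{i})\in p_{i}(x,c_{i})$ such that $\neg\varphi_{i}(d_{1}g_{i}d_{2},c_{i})$ for \emph{all} $d_{1},d_{2}\in H_{i}$; after pigeonholing these $\varphi_{i}$, the product argument gives exactly $\varphi(g_{s},c_{i})\Leftrightarrow i\notin s$, contradicting dependence. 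Your plan can be salvaged by iterating your device once on each side: first $\xi\in p_{j}$ failing on $g_{j}H_{j}$ as above, then $\zeta\in p_{j}$ with $\zeta(a)\wedge\zeta(b)\Rightarrow\xi(b^{-1}\cdot a,\bar{c}_{j})$, which then fails on all of $H_{j}g_{j}H_{j}$ --- this two-sided step is precisely what is missing. Two minor points: your pigeonhole set $I$ of size $\kappa$ implicitly uses regularity of $\kappa$, but only an infinite homogeneous set is needed (and one always exists, since fewer than $\kappa$ formula shapes cannot each occur only finitely often among $\kappa$ indices); and your derivations of (2) and (3) from (1) are correct as written.
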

\begin{proof}
(1) Denote $H_{i}=p_{i}\left(\C,c_{i}\right)$. Suppose not, i.e.
for all $i<\kappa$, there is some $g_{i}$ such that $g_{i}\in H_{j}$
iff $i\neq j$. If $d_{1},d_{2}\in H_{i}$ then $d_{1}\cdot g_{i}\cdot d_{2}\notin H_{i}$.
Hence by compactness there is some formula $\varphi_{i}$, $\varphi_{i}\left(x,c_{i}\right)\in p_{i}\left(x,c_{i}\right)$
 such that for all such $d_{1},d_{2}\in H_{i}$, $\neg\varphi_{i}\left(d_{1}g_{i}d_{2},c_{i}\right)$
holds. Since $\left|\bigcup p_{i}\right|<\kappa$, we may assume that
for $i<\omega$, $\varphi_{i}$ is constant and equals $\varphi\left(x,y\right)$.
Now for any finite subset $s\subseteq\omega$, let $g_{s}=\prod_{i\in s}g_{i}$
(the order does not matter). So we have $\varphi\left(g_{s},c_{i}\right)$
iff $i\notin s$ --- a contradiction. 

(2) and (3) now follow easily from (1). 
\end{proof}
In (2) of Lemma \ref{lem:BaldSaxType}, if $C$ is an indiscernible
sequence, then the situation is simpler:
\begin{cor}
Suppose $G$ is a type definable group in a dependent theory $T$.
Given a family of uniformly type definable subgroups, defined by $p\left(x,y\right)$,
and an indiscernible sequence $C=\left\langle a_{i}\left|\, i\in\mathbb{Z}\right.\right\rangle $,
$\bigcap_{i\neq0}p\left(\C,a_{i}\right)=\bigcap_{i\in\mathbb{Z}}p\left(\C,a_{i}\right)$.\end{cor}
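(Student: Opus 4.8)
The inclusion $\bigcap_{i\in\Zz}p\left(\C,a_{i}\right)\subseteq\bigcap_{i\neq0}p\left(\C,a_{i}\right)$ is trivial, so writing $G_{i}=p\left(\C,a_{i}\right)$ the content is the reverse inclusion $\bigcap_{i\neq0}G_{i}\subseteq G_{0}$. As in the earlier results I would assume, without loss of generality, that $G$ and $p$ are over $\emptyset$ and that the sequence $\left\langle a_{i}\left|\,i\in\Zz\right.\right\rangle $ is indiscernible over $\emptyset$; the point of indexing by $\Zz$ rather than by $\omega$ is that it carries shift automorphisms that make every index look like the index $0$.

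The plan is to suppose $\bigcap_{i\neq0}G_{i}\not\subseteq G_{0}$ and then re-run the Baldwin--Saxl argument of Lemma \ref{lem:BaldSaxType}(1), using indiscernibility to supply, for free, the uniform formula that in that lemma came from the cardinality hypothesis $\left|\bigcup p_{i}\right|<\kappa$ (which is unavailable here, since $\Zz$ is only countable). Concretely, fix $g\in\bigcap_{i\neq0}G_{i}\setminus G_{0}$. For each $n\in\Zz$ the map $a_{i}\mapsto a_{i+n}$ is partial elementary over $\emptyset$ by indiscernibility, hence extends to some $\sigma_{n}\in\Aut\left(\C\right)$; since $p$ is over $\emptyset$ we get $\sigma_{n}\left(G_{i}\right)=G_{i+n}$, and therefore $g_{n}:=\sigma_{n}\left(g\right)$ satisfies $g_{n}\in G_{j}$ iff $j\neq n$, for all $j,n\in\Zz$.

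Next I would thicken this into a uniform failure. Since $G_{n}$ is a subgroup and $g_{n}\notin G_{n}$, the (type-definable) set $\left\{ d_{1}\cdot g_{n}\cdot d_{2}\left|\,d_{1},d_{2}\in G_{n}\right.\right\} $ is disjoint from $G_{n}=p\left(\C,a_{n}\right)$, so by compactness there is a single formula $\varphi_{n}\left(x,y\right)\in p$ with $\neg\varphi_{n}\left(d_{1}g_{n}d_{2},a_{n}\right)$ for all $d_{1},d_{2}\in G_{n}$. Applying the automorphism $\sigma_{n}$ to the case $n=0$ shows we may take $\varphi_{n}=\varphi$ independent of $n$ --- this is exactly the step that indiscernibility buys us. Now for finite $s\subseteq\Zz$ put $g_{s}=\prod_{i\in s}g_{i}$, taken in the order of $\Zz$. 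If $j\notin s$ then every factor lies in $G_{j}$, so $\varphi\left(g_{s},a_{j}\right)$ holds; if $j\in s$ then $g_{s}=d_{1}\cdot g_{j}\cdot d_{2}$ with $d_{1},d_{2}\in G_{j}$, so $\neg\varphi\left(g_{s},a_{j}\right)$. Thus $\varphi\left(g_{s},a_{j}\right)$ holds iff $j\notin s$, which gives $\varphi$ the independence property and contradicts dependence of $T$.

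The main obstacle is precisely the uniformity of $\varphi$ across all $n$: in Lemma \ref{lem:BaldSaxType} this was obtained by pigeonholing on a family of size $\left|p\right|^{+}$, which a $\Zz$-indexed (hence countable) family cannot provide, so everything hinges on replacing that counting step by the homogeneity of the indiscernible sequence via the automorphisms $\sigma_{n}$. Once the uniform $\varphi$ is in hand, the product construction and the independence-property contradiction run exactly as in the proofs of Lemmas \ref{lem:BaldwinSaxl} and \ref{lem:BaldSaxType}(1).
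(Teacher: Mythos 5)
Your proof is correct, but it follows a genuinely different route from the paper's. The paper's own proof is a two-line reduction: if the intersection drops at index $0$, then by indiscernibility it drops at every index, and by compactness one stretches $C$ to an indiscernible sequence of length $\left|p\right|^{+}$ along which it still drops at every index; this directly contradicts Lemma \ref{lem:BaldSaxType}, whose pigeonhole on $\left|p\right|^{+}$ many indices is what supplies the uniform formula there. You instead keep the countable $\Zz$-indexed sequence and manufacture the uniformity by hand: writing $G_{i}=p\left(\C,a_{i}\right)$ as you do, the shift automorphisms $\sigma_{n}$ produce both the witnesses $g_{n}\in\bigcap_{j\neq n}G_{j}\setminus G_{n}$ and a single formula $\varphi\in p$ working at every index, after which the product construction and the independence-property contradiction run exactly as in Lemmas \ref{lem:BaldwinSaxl} and \ref{lem:BaldSaxType}(1). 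The paper's route buys brevity, since Lemma \ref{lem:BaldSaxType} is already proved and stretching an indiscernible sequence is routine; your route buys a self-contained argument with no cardinality bookkeeping, and it isolates the conceptual point that for indiscernible families it is homogeneity, not counting, that yields the uniform formula --- it also explains why the corollary is stated for $\Zz$ rather than $\omega$ (only $\Zz$ has shifts moving $0$ to an arbitrary index; the paper's transfer step ``the failure occurs at every index'' likewise needs two-sided indiscernibility). One hypothesis both arguments require, which you rightly flagged as a ``without loss of generality'': the sequence must be indiscernible over the parameters of $p$, of $G$, and of the group operation $\nu$, which is how the statement should be read.
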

\begin{proof}
Assume not. By indiscernibility, we get that for all $i\in\mathbb{Z}$,
$\bigcap_{j\neq i}p\left(\C,a_{j}\right)=p\left(\C,a_{i}\right)$.
Let $I$ be an indiscernible sequence which extends $C$ to length
$\left|p\right|^{+}$. Then by indiscernibility and compactness the
same is true for this sequence. This contradicts Lemma \ref{lem:BaldSaxType}.\end{proof}
\begin{rem}
In the proof that $G^{00}$ exists in dependent theories, the above
corollary is in the kernel of the proof.
\end{rem}
If $T$ is strongly dependent, and $C$ is indiscernible, we can even
assume that the order type is $\omega$. Let us recall,
\begin{defn}
\label{def:strongly-dep}A theory $T$ is said to be \emph{\uline{not}}\emph{
strongly dependent} if there exists a sequence of formulas $\left\langle \varphi_{i}\left(x,y_{i}\right)\left|\, i<\omega\right.\right\rangle $
and an array $\left\langle a_{i,j}\left|\, i,j<\omega\right.\right\rangle $
such that 
\begin{itemize}
\item The array $\left\langle a_{i,j}\left|\, i,j<\omega\right.\right\rangle $
is an indiscernible array (over $\emptyset$). 
\item The set $\left\{ \varphi_{i}\left(x,a_{i,0}\right)\land\neg\varphi_{i}\left(x,a_{i,1}\right)\left|\, i<\omega\right.\right\} $
is consistent.
\end{itemize}
So $T$ is \emph{strongly dependent} when this configuration does
not exist.\end{defn}
\begin{lem}
\label{lem:strong1BalSaxl}Suppose $G$ is a type definable group
in a strongly dependent theory $T$. Given a family of type definable
subgroups $\left\{ p_{i}\left(x,a_{i}\right)\left|\, i<\omega\right.\right\} $
such that $\left\langle a_{i}\left|\, i<\omega\right.\right\rangle $
is an indiscernible sequence and $p_{2i}=p_{2i+1}$ for all $i<\omega$,
there is some $i<\omega$ such that $\bigcap_{j\neq i}p_{j}\left(\C,a_{j}\right)=\bigcap_{j<\omega}p_{j}\left(\C,a_{j}\right)$.

In particular, this is true when $p$ is constant. \end{lem}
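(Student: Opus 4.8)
The plan is to argue by contradiction and to manufacture the forbidden configuration of Definition \ref{def:strongly-dep}, in the spirit of the proof of Proposition \ref{prop:noInfiniteChain}. The whole point of the hypothesis $p_{2i}=p_{2i+1}$ will be that it lets us avoid the accumulating parameter $b_i$ that appears in that proof (the feature that makes it produce only a strongly$^{2}$ dependent configuration), so that here a genuinely strongly dependent pattern comes out.

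Write $H_i=p_i(\C,a_i)$ and suppose the conclusion fails. Then for every $i<\omega$ the group $H_i$ is not redundant, so there is $g_i\in\bigcap_{j\neq i}H_j\setminus H_i$. The first step is to replace each type by a single separating formula with the coset-translation property, exactly as in the proofs of Lemma \ref{lem:BaldwinSaxl} and Proposition \ref{prop:noInfiniteChain}: for each $k$ pick $\psi_k\in p_{2k+1}$ with $\neg\psi_k(g_{2k+1},a_{2k+1})$, and then by compactness a formula $\xi_k\in p_{2k}=p_{2k+1}$ such that $\xi_k(u,y)\land\xi_k(v,y)$ implies $\psi_k(u\cdot v^{-1},y)$. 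Set $\varphi_k(x,y)=\xi_k(x,y)$. Because the pair $(a_{2k},a_{2k+1})$ realizes the \emph{same} type $p_{2k}=p_{2k+1}$ at both coordinates, $\varphi_k$ separates them in the right direction: membership in $H_{2k}$ forces $\varphi_k(\,\cdot\,,a_{2k})$, while the coset-translation property forces the failure of $\varphi_k(\,\cdot\,,a_{2k+1})$ on the relevant elements lying outside $H_{2k+1}$.

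The consistency of $\{\varphi_k(x,a_{2k})\land\neg\varphi_k(x,a_{2k+1})\mid k<\omega\}$ is then checked by compactness using the group structure, as in the final paragraph of the proof of Proposition \ref{prop:noInfiniteChain}. For a finite set $F$ put $c_F=\prod_{k\in F}g_{2k+1}$. Every factor lies in $H_{2k}$ for each $k\in F$ (the even indices $2k$ differ from every odd index $2k'+1$), so $c_F\in H_{2k}$ and $\varphi_k(c_F,a_{2k})$ holds; and for each $k\in F$ all factors except $g_{2k+1}$ lie in $H_{2k+1}$ while $g_{2k+1}\notin H_{2k+1}$, so $c_F\notin H_{2k+1}$ and, by the translation property of $\xi_k$, $\neg\varphi_k(c_F,a_{2k+1})$ holds. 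Crucially, since we compare within a single pair, $c_F$ needs no level-dependent shift, so no extra parameter is required.

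It remains to arrange the data into an indiscernible array, where row $k$ should be an infinite indiscernible sequence whose first two entries are $a_{2k}$ and $a_{2k+1}$, with the rows mutually indiscernible, so that $\varphi_k$ and the witnesses $c_F$ give exactly the pattern of Definition \ref{def:strongly-dep}. The pairs $\langle(a_{2k},a_{2k+1})\mid k<\omega\rangle$ already form an indiscernible sequence, and I would extend the given sequence and then apply Erd\H{o}s-Rado (precisely the use of Ramsey made in the proofs of Proposition \ref{prop:noInfiniteChain} and Corollary \ref{cor:OpenQuestionStrong2}) to extract mutually indiscernible rows, checking that the separating formulas and the product witnesses survive the extraction. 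This last point --- producing genuinely mutually indiscernible rows out of a single indiscernible sequence while keeping the separation consistent --- is the main obstacle, and it is exactly where the hypothesis $p_{2i}=p_{2i+1}$ is essential: it makes the configuration at each level self-contained, so that re-extracting within the pairing does not disturb the other levels. The resulting array contradicts strong dependence. Finally, the stated special case, $p$ constant, is just the instance $p_i=p$ of the pairing hypothesis, so it is immediate.
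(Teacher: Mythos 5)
Your outline follows the paper's strategy (contradiction, one separating formula per pair, products of the $g$'s as witnesses, then an indiscernible array), but the key step does not go through as written. Your $\xi_k$ satisfies: if $\xi_k(u,a_{2k+1})$ and $\xi_k(v,a_{2k+1})$ then $\psi_k(u\cdot v^{-1},a_{2k+1})$. Together with $\neg\psi_k(g_{2k+1},a_{2k+1})$ and $H_{2k+1}\subseteq\xi_k(\C,a_{2k+1})$, this shows only that the \emph{left coset} $g_{2k+1}H_{2k+1}$ is disjoint from $\xi_k(\C,a_{2k+1})$. But for a given $k\in F$ your witness $c_F=\prod_{k'\in F}g_{2k'+1}$ has the form $d_1\cdot g_{2k+1}\cdot d_2$ with $d_1,d_2\in H_{2k+1}$, i.e.\ it lies only in the \emph{double coset} $H_{2k+1}\, g_{2k+1}\, H_{2k+1}$, and the inference ``$c_F\notin H_{2k+1}$, hence $\neg\varphi_k(c_F,a_{2k+1})$'' is a non sequitur: an element outside a type-definable group can perfectly well satisfy any single formula of the defining type. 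Your argument is valid only for the minimal $k\in F$ (where $d_1=1$), and no ordering of the product puts $g_{2k+1}$ leftmost for every $k\in F$ simultaneously; in a noncommutative $G$ the left cosets $g_{2k'+1}H_{2k'+1}$, $k'\in F$, may even have empty intersection, so the witness cannot be saved by reordering. The repair is the paper's (and Lemma \ref{lem:BaldSaxType}'s) two-sided choice: by compactness pick $\varphi_k\in p_{2k}=p_{2k+1}$ such that $\neg\varphi_k\left(d_1\cdot g_{2k+1}\cdot d_2,a_{2k+1}\right)$ holds for \emph{all} $d_1,d_2\in H_{2k+1}$; such a formula excludes the whole double coset, and then your computation of the pattern works with any order of multiplication.

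Second, the array step, which you yourself flag as ``the main obstacle'', is left unresolved, and the tool you reach for (Erd\H{o}s--Rado re-extraction) is the wrong one: after re-extraction you would no longer know that the first two entries of row $k$ are the specific parameters to which $\varphi_k$ and $g_{2k+1}$ are attached, and restoring the pattern would require an additional argument (via indiscernibility of the sequence of pairs) that you do not give. The paper avoids the issue entirely: extend the given indiscernible sequence by compactness, inserting an infinite block $\left\langle b_{k,j}\left|\, j<\omega\right.\right\rangle $ immediately after each pair $a_{2k},a_{2k+1}$. Convex infinite blocks of a single indiscernible sequence are automatically mutually indiscernible (the type of a finite tuple depends only on the order type of its indices), so the rows $\left(a_{2k},a_{2k+1},b_{k,0},b_{k,1},\ldots\right)$ form an indiscernible array in which the original witnesses are untouched, and the consistency established beforehand is exactly the forbidden pattern of Definition \ref{def:strongly-dep}. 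With these two corrections your proof becomes the paper's proof.
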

\begin{proof}
Denote $H_{i}=p_{i}\left(\C,a_{i}\right)$. Assume not, i.e. for all
$i<\omega$, there exists some $g_{i}\in G$ such that $g_{i}\in H_{j}$
iff $i\neq j$. For each even $i<\omega$ we find a formula $\varphi_{i}\left(x,y\right)\in p_{i}\left(x,y\right)$
such that for all $d_{1},d_{2}\in H_{i}$, $\neg\varphi_{i}\left(d_{1}g_{i}d_{2},a_{i}\right)$.
Let $n<\omega$, and consider the product $g_{n}=\prod_{i<n,\,2\mid i}g_{i}$
(the order does not matter). Then for odd $i<n$, $\varphi_{i-1}\left(g_{n},a_{i}\right)$
holds (because $\varphi_{i-1}\in p_{i-1}=p_{i}$ by assumption), and
for even $i<n$, $\neg\varphi_{i}\left(g_{n},a_{i}\right)$ holds.
By compactness, we can find $g\in G$ such that $\varphi_{i-1}\left(g_{n},a_{i}\right)$
holds for all odd $i<\omega$ and $\neg\varphi_{i}\left(g,a_{i}\right)$
for all even $i<\omega$. Now expand the sequence by adding a sequence
$\left\langle b_{i,j}\left|\, j<\omega\right.\right\rangle $ after
each pair $a_{2i},a_{2i+1}$. Then the array defined by $a_{i,0}=a_{2i}$,
$a_{i,1}=a_{2i+1}$ and $a_{i,j}=b_{i,j-2}$ for $j\geq2$ will show
that the theory is not strongly dependent. 
\end{proof}
If the theory is of bounded dp-rank, then we can say even more.
\begin{defn}
\label{def:dp-rank}A theory $T$ is said to have \emph{bounded dp-rank},
if there is some $n<\omega$ such that the following configuration
does \uline{not} exist: a sequence of formulas $\left\langle \varphi_{i}\left(x,y_{i}\right)\left|\, i<n\right.\right\rangle $
where $x$ is a \uline{singleton} and an array $\left\langle a_{i,j}\left|\, i<n,j<\omega\right.\right\rangle $
such that 
\begin{itemize}
\item The array $\left\langle a_{i,j}\left|\, i<n,j<\omega\right.\right\rangle $
is an indiscernible array (over $\emptyset$). 
\item The set $\left\{ \varphi_{i}\left(x,a_{i,0}\right)\land\neg\varphi_{i}\left(x,a_{i,1}\right)\left|\, i<n\right.\right\} $
is consistent.
\end{itemize}
$T$ is \emph{dp-minimal} if $n=2$.
\end{defn}
Note that if $T$ has bounded dp-rank, then it is strongly dependent. 
\begin{rem}
All dp-minimal theories are of bounded dp-rank. This includes all
$o$-minimal theories and the $p$-adics. 
\end{rem}
The name is justified by the following fact:
\begin{fact}
\label{fac:dp-rank}\cite{AlexAlfKaplan} If $T$ has bounded dp-rank,
then for any $m<\omega$, there is some $n_{m}<\omega$ such that
a configuration as in Definition \ref{def:dp-rank} with $n_{m}$
replacing $n$ is impossible for a tuple $x$ of length $m$ (in fact
$n_{m}\leq m\cdot n_{1}$).\end{fact}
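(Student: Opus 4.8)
The plan is to deduce the Fact from the subadditivity of dp-rank, which I would prove in the equivalent \emph{mutually indiscernible} formulation. First I would record that the array configuration of Definition \ref{def:dp-rank} with bound $n$ is equivalent to the following: one cannot find $n$ mutually indiscernible sequences $\langle I_i \mid i<n\rangle$ (over some base) together with a tuple $b$ of length $m$ such that no $I_i$ is indiscernible over $b$. One direction is immediate, since the split $\varphi_i(b,a_{i,0})\land\neg\varphi_i(b,a_{i,1})$ already exhibits non-indiscernibility of the $i$-th row over $b$. For the converse, given such $I_i$ and $b$, non-indiscernibility of $I_i$ over $b$ yields a formula and two increasing tuples of indices on which it disagrees; a routine Ramsey extraction inside each row (preserving mutual indiscernibility) pushes the disagreement to the first two elements, producing exactly the array configuration. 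Writing $n_m$ for the resulting bound on $m$-tuples, it then suffices to prove $n_m\le n_{m-1}+n_1$, since induction gives $n_m\le m\cdot n_1$.

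For the inductive step I would take a family $\langle I_i\mid i<n\rangle$ of mutually indiscernible sequences and a tuple $b=b'c$, with $b'$ of length $m-1$ and $c$ a singleton, such that no $I_i$ is indiscernible over $b$. Partition the index set into $U=\{i: I_i$ is not indiscernible over $b'\}$ and $W=\{i: I_i$ is indiscernible over $b'$ but not over $b'c\}$; every index lies in exactly one part. The sub-family $\langle I_i\mid i\in U\rangle$ is again mutually indiscernible and none of its members is indiscernible over the $(m-1)$-tuple $b'$, so $|U|\le n_{m-1}$ by the inductive hypothesis. For $W$, I want to view the singleton $c$ as splitting each $I_i$ over the base $b'$ and conclude $|W|\le n_1$; for this I must know that the family $\langle I_i\mid i\in W\rangle$ is mutually indiscernible \emph{over $b'$}, not merely over $\emptyset$.

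The main obstacle is exactly this last point: the rows are only given mutually indiscernible over $\emptyset$, and each member of $W$ is individually $b'$-indiscernible, but passing to mutual indiscernibility over the enlarged base $b'$ while \emph{preserving} that $c$ still splits each sequence is the technical heart of the argument. I would isolate it as a lemma: given sequences mutually indiscernible over $\emptyset$, each indiscernible over a tuple $d$ and each split over $d$ by a further singleton $c$, one can extract sequences mutually indiscernible over $d$ on which $c$ still acts non-trivially. This is proved by a Ramsey/Erd\H{o}s--Rado extraction over the base $d$, using that finite dp-rank forces $T$ to be NIP, so that the alternation witnessing each split can be located at a fixed pair of positions and survives the simultaneous extraction. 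Granting the lemma, $\langle I_i\mid i\in W\rangle$ becomes a depth-$|W|$ configuration for the singleton $c$ over $b'$, whence $|W|\le n_1$ (one also needs that the singleton bound is insensitive to naming the parameters $b'$, i.e.\ that dp-rank does not grow over a larger base, which follows from the same extraction technique). Combining the two estimates gives $n=|U|+|W|\le n_{m-1}+n_1$, completing the induction.
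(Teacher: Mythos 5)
This Fact is not proved in the paper at all: it is imported from \cite{AlexAlfKaplan} (the subadditivity of dp-rank), so your proposal can only be measured against that cited argument. Your reductions are the standard skeleton of that proof and are fine: the equivalence between the array configuration of Definition \ref{def:dp-rank} and the formulation with mutually indiscernible sequences (regrouping a row into blocks so that the disagreement sits on two consecutive blocks), the absorption of the parameters $b'$ into the rows so that the singleton bound $n_{1}$ applies over any finite base, and the inductive scheme $n_{m}\le n_{m-1}+n_{1}$.

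The gap is the lemma you isolate and then wave through: given sequences mutually indiscernible over $\emptyset$, each indiscernible over $b'$ and each split over $b'$ by the singleton $c$, produce sequences mutually indiscernible over $b'$ that $c$ still splits. This is not a technical footnote to be handled by ``a routine Ramsey/Erd\H{o}s--Rado extraction''; it \emph{is} the theorem, essentially the main result of \cite{AlexAlfKaplan}, and the mechanism you propose fails. Concretely, an extraction over the base $b'$ yields sequences $J_{i}$ whose finite configurations realize types over $b'$ of configurations drawn from the $I_{i}$, but it gives no control whatsoever of the type of the external element $c$ over $b'\cup\bigcup_{i}J_{i}$; the only way to keep $c$ in the picture during extraction is to extract over $b'c$, and then each $J_{i}$ becomes indiscernible over $c$, destroying exactly what you must preserve. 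Pinning the alternation of $\varphi_{i}\left(c,\cdot\right)$ at a fixed pair of consecutive positions (legitimate under NIP) does not repair this: the extraction must be free to pick configurations from arbitrary positions in the rows, and away from the pinned pair $\varphi_{i}\left(c,\cdot\right)$ is constant, so the witnesses need not survive. The cited proof overcomes this circularity by a genuinely different and more involved argument (roughly, a persistent failure of mutual indiscernibility over $b'$ is converted into a deep ict-pattern for the tuple $b'$ itself, assembled from witnesses spread over several of the sequences), and what it yields is a \emph{lossy} lemma: one may discard boundedly many (up to the dp-rank of $b'$) of the sequences so that the rest become mutually indiscernible over $b'$. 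Note finally that even granting that true, lossy lemma, your $U/W$ bookkeeping double-charges $b'$ --- the removals needed for $W$ come on top of $U$ --- giving $n_{m}\le2n_{m-1}+n_{1}$ and hence a bound exponential in $m$; the stated linear bound $n_{m}\le m\cdot n_{1}$ is obtained by applying the lemma to the whole family at once rather than first splitting off $U$.
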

\begin{lem}
\label{lem:bounded dp rank case}Let $G$ be type definable group
in a bounded dp-rank theory $T$. 

Given a family of type definable subgroups $\left\{ p_{i}\left(x,a_{i}\right)\left|\, i<\omega\right.\right\} $
such that $\left\langle a_{i}\left|\, i<\omega\right.\right\rangle $
is an indiscernible sequence and $p_{2i}=p_{2i+1}$ for all $i<\omega$,
there is some $n<\omega$ and $i<n$ such that $\bigcap_{j\neq i,j<n}p_{j}\left(\C,a_{j}\right)=\bigcap_{j<n}p_{j}\left(\C,a_{j}\right)$.

In particular, if $p_{i}$ is constant (say $p$) and $\left\langle a_{i}\left|\, i<\omega\right.\right\rangle $
is an \uline{indiscernible set}, then $\bigcap_{i<\omega}p\left(\C,a_{i}\right)=\bigcap_{i<n}p\left(\C,a_{i}\right)$.

In particular, $T$ has Property A. \end{lem}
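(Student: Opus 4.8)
The plan is to rerun the proof of Lemma~\ref{lem:strong1BalSaxl}, but to halt after finitely many rows and invoke the uniform bound supplied by bounded dp-rank (Fact~\ref{fac:dp-rank}) in place of the mere failure of strong dependence. Write $H_i = p_i(\C, a_i)$, let $m$ be the length of the tuple $x$ (so $m < \omega$ as $G$ is finitary), and let $N = n_m$ be the bound from Fact~\ref{fac:dp-rank} for tuples of length $m$. I claim the first assertion holds with $n = 2N$.

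Suppose not; then each $H_i$ with $i < 2N$ is essential in $\bigcap_{j<2N} H_j$. As in Baldwin-Saxl, this produces for each $i<2N$ an element $g_i \in G$ with $g_i \in H_j$ (for $j<2N$) iff $j \neq i$. For each even $i<2N$, since $g_i \notin H_i$ and $H_i$ is a group we have $d_1 g_i d_2 \notin H_i$ whenever $d_1, d_2 \in H_i$, so by compactness there is $\varphi_i(x,y) \in p_i(x,y)$ with $\neg\varphi_i(d_1 g_i d_2, a_i)$ for all such $d_1, d_2$. Set $g = \prod_{i<2N,\, 2\mid i} g_i$ (the order is immaterial). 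For even $i<2N$ all factors other than $g_i$ lie in $H_i$, so $g = d_1 g_i d_2$ with $d_1, d_2 \in H_i$ and hence $\neg\varphi_i(g, a_i)$; for odd $i<2N$ every factor lies in $H_i$, so $g \in H_i$, and since $\varphi_{i-1} \in p_{i-1} = p_i$ is one of the formulas cutting out $H_i$ we get $\varphi_{i-1}(g, a_i)$. Thus the set $\{\, \neg\varphi_{2t}(x, a_{2t}) \land \varphi_{2t}(x, a_{2t+1}) : t < N \,\}$ is consistent, being realized by $g$.

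It remains to array this configuration. Exactly as in Lemma~\ref{lem:strong1BalSaxl}, expand $\langle a_i \mid i < 2N\rangle$ by inserting an indiscernible filler sequence after each pair $a_{2t}, a_{2t+1}$, obtaining (by compactness and the indiscernibility of $\langle a_i\rangle$) a mutually indiscernible array $\langle c_{t,j} \mid t<N,\, j<\omega\rangle$ with $c_{t,0} = a_{2t}$ and $c_{t,1} = a_{2t+1}$. Putting $\psi_t = \neg\varphi_{2t}$, the consistent set above reads $\{\, \psi_t(x, c_{t,0}) \land \neg\psi_t(x, c_{t,1}) : t < N \,\}$, which together with this array is a configuration as in Definition~\ref{def:dp-rank} having $N = n_m$ rows and $x$ of length $m$ --- contradicting Fact~\ref{fac:dp-rank}. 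This establishes the first assertion with $n = 2N$.

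Finally I deduce the two ``in particular'' clauses. If $p_i \equiv p$ is constant and $\langle a_i\rangle$ is an indiscernible \emph{set}, the first assertion gives $i<n$ with $\bigcap_{j<n,\, j\neq i} H_j \subseteq H_i$; writing $K = \bigcap_{j<n,\, j\neq i} H_j$ and using that every permutation of an indiscernible set is elementary, the automorphism swapping $i$ with any $r \geq n$ and fixing the remaining indices yields $K \subseteq H_r$ as well, so $K \subseteq \bigcap_{r<\omega} H_r$ and therefore $\bigcap_{r<\omega} H_r = \bigcap_{j<n} H_j$. For Property~A, fix $p$ and the indiscernible sequence and, for a finite $v$, choose a minimal $W \subseteq v$ with $\bigcap_{i\in W} H_i = \bigcap_{i\in v} H_i$. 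If $|W| > 2N$, minimality gives for each $i\in W$ an element $g_i$ with $g_i \in H_j$ ($j\in W$) iff $j\neq i$; applying the argument of the previous two paragraphs to the first $2N$ elements of $W$ (the pairing hypothesis being automatic as $p$ is constant) again contradicts Fact~\ref{fac:dp-rank}. Hence $|W| \leq 2N$, and padding $W$ within $v$ gives a sub-intersection of size $2N$; thus Property~A holds with $n = 2N = 2n_m$. The one delicate point throughout is the construction of the mutually indiscernible array, but this is carried out precisely as in Lemma~\ref{lem:strong1BalSaxl}.
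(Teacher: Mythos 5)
Your proof is correct and takes essentially the same route as the paper: the paper's own proof is a one-line reduction to Lemma \ref{lem:strong1BalSaxl} (``only construct $g_n$ for $n$ large enough''), which together with Fact \ref{fac:dp-rank} is exactly your argument with $n = 2n_m$. Your explicit treatment of the two ``in particular'' clauses --- the permutation-automorphism argument for indiscernible sets and the minimal-subset argument for Property A --- fills in details the paper leaves to the reader, and both are sound.
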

\begin{proof}
The proof is exactly the same as the proof of Lemma \ref{lem:strong1BalSaxl},
but we only need to construct $g_{n}$ for $n$ large enough. 
\end{proof}
Another similar proposition:
\begin{prop}
\label{prop:NormalIntersection} Assume $T$ is strongly dependent,
$G$ a type definable group and $G_{i}\leq G$ are type definable
\uline{normal} subgroups for $i<\omega$. Then there is some $i_{0}$
such that $\left[\bigcap_{i\neq i_{0}}G_{i}:\bigcap_{i<\omega}G_{i}\right]<\infty$.\end{prop}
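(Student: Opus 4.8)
The plan is to argue by contraposition: assuming that for \emph{every} $i_{0}<\omega$ the index $\left[\bigcap_{i\neq i_{0}}G_{i}:\bigcap_{i<\omega}G_{i}\right]$ is unbounded, I would build an indiscernible array witnessing that $T$ is not strongly dependent (Definition \ref{def:strongly-dep}), contradicting the hypothesis. As usual I would first reduce to the case where $G$ and all the $G_{i}$ are type definable over $\emptyset$, and set $H=\bigcap_{i<\omega}G_{i}$. For each $i$, since $\bigcap_{k\neq i}G_{k}$ has unbounded index over $H$, Definition \ref{def:unbounded index} supplies an indiscernible sequence $\left\langle a_{i,j}\left|\, j<\omega\right.\right\rangle $ inside $\bigcap_{k\neq i}G_{k}$ with $a_{i,j}^{-1}\cdot a_{i,j'}\notin H$ for $j<j'$; since these elements already lie in $\bigcap_{k\neq i}G_{k}$, this means $a_{i,j}^{-1}\cdot a_{i,j'}\notin G_{i}$.

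Next I would assemble these rows into a single \emph{mutually} indiscernible array $\left\langle a_{i,j}\left|\, i,j<\omega\right.\right\rangle $ by the extraction (Ramsey/modelling) argument already used in Proposition \ref{prop:noInfiniteChain} and Corollary \ref{cor:OpenQuestionStrong2}, arranging that the target EM-type retains, for each $i$, the type conditions $a_{i,0}\in\bigcap_{k\neq i}G_{k}$ together with a single formula $\psi_{i}\left(x\right)$ from the type defining $G_{i}$ with $\neg\psi_{i}\left(a_{i,0}^{-1}\cdot a_{i,1}\right)$. This is the technical heart of the argument and the point I expect to be the main obstacle: I must check that membership in the relevant type definable groups and the coset-separating inequality both survive the passage to the mutually indiscernible array. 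For each $i$, writing $d_{i}=a_{i,1}^{-1}\cdot a_{i,0}\notin G_{i}$, a compactness argument (the types defining $G_{i}$ and the disjoint coset $d_{i}G_{i}$ cannot be satisfied simultaneously) produces a formula $\xi_{i}\left(x\right)$ in the type defining $G_{i}$ with $\neg\xi_{i}\left(d_{i}\cdot h\right)$ for every $h\in G_{i}$; I then set $\varphi_{i}\left(x,y\right)=\xi_{i}\left(y^{-1}\cdot x\right)$.

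Finally I would verify that $\left\{ \varphi_{i}\left(x,a_{i,0}\right)\land\neg\varphi_{i}\left(x,a_{i,1}\right)\left|\, i<\omega\right.\right\} $ is consistent by realizing each finite subset $F$ with $c=\prod_{i\in F}a_{i,0}$. Here normality is essential: passing to the quotient group $G/G_{i_{0}}$, every factor $a_{i,0}$ with $i\neq i_{0}$ lies in $\bigcap_{k\neq i}G_{k}\leq G_{i_{0}}$ and so collapses, giving $c\equiv a_{i_{0},0}\pmod{G_{i_{0}}}$. Thus $a_{i_{0},0}^{-1}\cdot c\in G_{i_{0}}$, whence $\varphi_{i_{0}}\left(c,a_{i_{0},0}\right)$ holds; and $a_{i_{0},1}^{-1}\cdot c=d_{i_{0}}\cdot\left(a_{i_{0},0}^{-1}\cdot c\right)$ with $a_{i_{0},0}^{-1}\cdot c\in G_{i_{0}}$, so by the choice of $\xi_{i_{0}}$ we obtain $\neg\xi_{i_{0}}\left(a_{i_{0},1}^{-1}\cdot c\right)$, i.e. $\neg\varphi_{i_{0}}\left(c,a_{i_{0},1}\right)$. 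Since this holds for all $i_{0}\in F$, the set is finitely satisfiable, and by compactness the whole configuration contradicts strong dependence, completing the proof.
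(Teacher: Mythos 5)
Your proof is correct and follows essentially the same route as the paper's: negate the conclusion, use unboundedness of each index to produce rows inside $\bigcap_{k\neq i}G_{k}$ whose differences avoid $G_{i}$, pass to a mutually indiscernible array, choose by compactness a separating formula from the type defining each $G_{i}$, and realize every finite part of the pattern $\left\{ \varphi_{i}\left(x,a_{i,0}\right)\land\neg\varphi_{i}\left(x,a_{i,1}\right)\right\}$ by the product $\prod_{i\in F}a_{i,0}$, contradicting strong dependence. The only differences are bookkeeping: the paper fixes its formula $\psi_{i}$ \emph{before} applying Ramsey, taking it to exclude the whole set $G_{i}\cdot a_{i,0}^{-1}\cdot G_{i}\cdot a_{i,1}\cdot G_{i}$ and verifying the realization by conjugation computations, whereas you extract the array first, exclude only the coset $d_{i}G_{i}$, and invoke normality via the quotient $G/G_{i_{0}}$ --- a mild reorganization that, if anything, makes the preservation-under-extraction step slightly cleaner.
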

\begin{proof}
Assume not. Then, for each $i<\omega$, we have an indiscernible sequence
$\left\langle a_{i,j}\left|\, j<\omega\right.\right\rangle $ (over
the parameters defining all the groups) such that $a_{i,j}\in\bigcap_{k\neq i}G_{k}$
and for $j_{1}<j_{2}<\omega$, $a_{i,j_{1}}^{-1}\cdot a_{i,j_{2}}\notin G_{i}$.
Note that if $d_{1},d_{2},d_{3}\in G_{i}$, then $d_{1}\cdot a_{i,j_{1}}^{-1}\cdot d_{2}\cdot a_{i,j_{2}}\cdot d_{3}\notin G_{i}$,
since $G_{i}$ is normal. By compactness there is a formula $\psi_{i}\left(x\right)$
in the type defining $G_{i}$ such that for all $d_{1},d_{2},d_{3}\in G_{i}$,
$\neg\psi_{i}\left(d_{1}\cdot a_{i,j_{1}}^{-1}\cdot d_{2}\cdot a_{i,j_{2}}\cdot d_{3}\right)$
holds (by indiscernibility it is the same for all $j_{1}<j_{2}$).
We may assume, applying Ramsey, that the array $\left\langle a_{i,j}\left|\, i,j<\omega\right.\right\rangle $
is indiscernible (i.e. the sequences are mutually indiscernible).
Let $\varphi_{i}\left(x,y\right)=\psi_{i}\left(x^{-1}\cdot y\right)$. 

Now we check that the set $\left\{ \varphi_{i}\left(x,a_{i,0}\right)\land\neg\varphi_{i}\left(x,a_{i,1}\right)\left|\, i<n\right.\right\} $
is consistent for each $n<\omega$. Let $c=a_{0,0}\cdot\ldots\cdot a_{n-1,0}$
(the order does not really matter, but for the proof it is easier
to fix one). So $\varphi_{i}\left(c,a_{i,0}\right)$ holds iff $\psi_{i}\left(a_{n-1,0}^{-1}\cdot\ldots\cdot a_{i,0}^{-1}\cdot\ldots\cdot a_{0,0}^{-1}\cdot a_{i,0}\right)$
holds. But since $G_{i}$ is normal, $a_{i,0}^{-1}\cdot\ldots\cdot a_{0,0}^{-1}\cdot a_{i,0}\in G_{i}$,
so the entire product is in $G_{i}$, so $\varphi_{i}\left(c,a_{i,0}\right)$
holds. On the other hand, $\psi_{i}\left(a_{n-1,0}^{-1}\cdot\ldots\cdot a_{i,0}^{-1}\cdot\ldots\cdot a_{0,0}^{-1}\cdot a_{i,1}\right)$
does not hold by choice of $\psi_{i}$.
\end{proof}
The following Corollary is a weaker version of Corollary \ref{cor:finiteIndex}:
\begin{cor}
If $G$ is an abelian definable group in a strongly dependent theory
and $S\subseteq\omega$ is an infinite set of pairwise co-prime numbers,
then for almost all (i.e. for all but finitely many) $n\in S$, $\left[G:G^{n}\right]<\infty$.
In particular, if $K$ is a definable field in a strongly dependent
theory, then for almost all primes $p$, $\left[K^{\times}:\left(K^{\times}\right)^{p}\right]<\infty$.\end{cor}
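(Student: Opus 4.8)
The plan is to show that the set of ``bad'' exponents $B_{S}=\{n\in S\mid[G:G^{n}]=\infty\}$ is finite; the field statement is then the special case $G=K^{\times}$ and $S$ the set of all primes. Here $G^{n}=\{g^{n}\mid g\in G\}$ is a definable subgroup of $G$ (the image of the definable $n$-th power map), and it is normal since $G$ is abelian. Suppose toward a contradiction that $B_{S}$ is infinite and enumerate it as $\left\langle n_{i}\mid i<\omega\right\rangle$; set $G_{i}=G^{n_{i}}$. These are type definable (indeed definable) normal subgroups of $G$, so Proposition~\ref{prop:NormalIntersection} applies and yields some $i_{0}$ with $[\bigcap_{i\neq i_{0}}G_{i}:\bigcap_{i<\omega}G_{i}]<\infty$. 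I will contradict this by proving that for \emph{every} such $i_{0}$ the relative index is in fact unbounded.

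So fix $i_{0}$ with $[G:G^{n_{i_{0}}}]=\infty$ and write $A=\bigcap_{i\neq i_{0}}G^{n_{i}}$ and $B=A\cap G^{n_{i_{0}}}=\bigcap_{i<\omega}G^{n_{i}}$. The elementary number-theoretic input is: if $\gcd(m,n_{i_{0}})=1$ and $d\in G\setminus G^{n_{i_{0}}}$, then $d^{m}\notin G^{n_{i_{0}}}$; indeed, picking $a,b$ with $am+bn_{i_{0}}=1$, from $d^{m}\in G^{n_{i_{0}}}$ one gets $d=(d^{m})^{a}(d^{b})^{n_{i_{0}}}\in G^{n_{i_{0}}}$, a contradiction. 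Now start from witnesses to $[G:G^{n_{i_{0}}}]=\infty$: for any cardinal $\kappa$ there are $\left\langle g_{k}\mid k<\kappa\right\rangle$ in $G$ with $g_{k}g_{l}^{-1}\notin G^{n_{i_{0}}}$ for $k\neq l$. Consider the partial type in variables $\left\langle x_{k}\mid k<\kappa\right\rangle$ asserting $x_{k}\in G^{n_{i}}$ for all $i\neq i_{0}$ together with $x_{k}x_{l}^{-1}\notin G^{n_{i_{0}}}$ for $k\neq l$. Any finite fragment mentions only finitely many indices $i$, say those in a finite set $w\not\ni i_{0}$, and finitely many $k$; putting $M_{w}=\prod_{i\in w}n_{i}$ and $x_{k}=g_{k}^{M_{w}}$ realises it, because $n_{i}\mid M_{w}$ gives $g_{k}^{M_{w}}\in G^{n_{i}}$ for $i\in w$, while $\gcd(M_{w},n_{i_{0}})=1$ together with the displayed fact give $g_{k}^{M_{w}}(g_{l}^{M_{w}})^{-1}=(g_{k}g_{l}^{-1})^{M_{w}}\notin G^{n_{i_{0}}}$. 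By compactness the whole type is realised by some $\left\langle h_{k}\mid k<\kappa\right\rangle$. Then each $h_{k}\in A$, and $h_{k}h_{l}^{-1}\notin G^{n_{i_{0}}}\supseteq B$, so the $h_{k}$ lie in pairwise distinct cosets of $B$ in $A$; as $\kappa$ was arbitrary, $[A:B]=\infty$ in the sense of Definition~\ref{def:unbounded index}.

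This holds for every $i_{0}\in B_{S}$, contradicting the choice of $i_{0}$ coming from Proposition~\ref{prop:NormalIntersection}. Hence $B_{S}$ is finite, which proves the corollary; as noted, taking $G=K^{\times}$ and $S$ the primes gives the field statement (the stronger conclusion for \emph{all} primes being exactly Corollary~\ref{cor:finiteIndex} in the strongly$^{2}$ dependent case). The main obstacle is precisely the middle step: passing from the infinite-index witnesses for the single group $G^{n_{i_{0}}}$ to witnesses living in the infinite intersection $A$. This is where the pairwise coprimality is essential, via the Bézout fact and a compactness argument, to guarantee that raising the witnesses to the power $M_{w}$ keeps them simultaneously inside all the finitely many $G^{n_{i}}$ ($i\in w$) and outside $G^{n_{i_{0}}}$.
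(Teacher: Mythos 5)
Your proof is correct and follows essentially the same route as the paper: assume the set of bad exponents is infinite, apply Proposition \ref{prop:NormalIntersection} to the definable normal (since $G$ is abelian) subgroups $G^{n}$, and contradict its conclusion by raising coset representatives for $G^{n_{i_{0}}}$ to the power $\prod_{i\in w}n_{i}$, which is coprime to $n_{i_{0}}$, and using compactness to move them into $\bigcap_{i\neq i_{0}}G^{n_{i}}$. The only cosmetic differences are that the paper packages the witnesses as an indiscernible sequence and leaves the B\'ezout step implicit, whereas you use arbitrary $\kappa$-sequences of coset representatives and spell that step out.
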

\begin{proof}
Let $K\subseteq S$ be the set of $n\in S$ such that $\left[G:G^{n}\right]<\infty$.
If $S\backslash K$ is infinite, we replace $S$ with $S\backslash K$.

For $i\in S$, let $G_{i}=G^{i}$ (so it is definable). By Proposition
\ref{prop:NormalIntersection}, there is some $n$ such that $\left[\bigcap_{i\neq n}G_{i}:\bigcap_{i\in S}G_{i}\right]<\infty$.
If $\left[G:G_{n}\right]=\infty$, then there is an indiscernible
sequence $\left\langle a_{i}\left|\, i<\omega\right.\right\rangle $
of elements of $G$, such that $a_{i}^{-1}\cdot a_{j}\notin G_{n}$.
Suppose $S_{0}\subseteq S\backslash\left\{ n\right\} $ is a finite
subset and let $r=\prod S_{0}$. Then $\left\langle a_{i}^{r}\left|\, i<\omega\right.\right\rangle $
is an indiscernible sequence in $G^{r}\subseteq\bigcap_{i\in S_{0}}G_{i}$
such that $a_{i}^{-r}\cdot a_{j}^{r}\notin G_{n}$. So by compactness,
we can find such a sequence in $\bigcap_{i\neq n}G_{i}$ --- contradiction.\end{proof}
\begin{rem}
The above Proposition and Corollary can be generalized (with almost
the same proofs) to the case where the theory is only \emph{strong}.
For the definition, see \cite{AdlerStrong}. 
\end{rem}

\begin{rem}
This Corollary generalizes in some sense \cite[Proposition 2.1]{PilKrup}
(as they only assumed finite weight of the generic type). And so,
as in \cite[Corollary 2.2]{PilKrup} , we can conclude that if $K$
is a field definable in a strongly stable theory (i.e. the theory
is strongly dependent and stable), then $K^{p}=K$ for almost all
primes $p$.\end{rem}
\begin{problem}
Is Proposition \ref{prop:NormalIntersection} is still true without
the assumption that the groups are normal? 

Note that in strongly dependent$^{2}$ theories, this assumption is
not needed: Let $H_{i}=\bigcap_{j<i}G_{i}$. Then $\left[H_{i}:H_{i+1}\right]<\infty$
for all $i$ big enough by Proposition \ref{prop:noInfiniteChain}.
But this implies $\left[\bigcap_{j\neq i}G_{j}:\bigcap_{j}G_{j}\right]<\infty$. 
\end{problem}

\subsection*{$\kappa$-intersection.\protect \\
}

This part is joint work with Frank Wagner.
\begin{defn}
For a cardinal $\kappa$ and a family $\mathfrak{F}$ of subgroups
of a group $G$, the $\kappa$ intersection $\bigcap_{\kappa}\mathfrak{F}$
is $\left\{ g\in G\left|\,\left|\left\{ F\in\mathfrak{F}\left|g\notin F\right.\right\} \right|<\kappa\right.\right\} $.\end{defn}
\begin{prop}
\label{prop:kappIntersection}Let $G$ be a type definable group in
a dependent theory. Suppose
\begin{itemize}
\item $\mathfrak{F}$ is a family of uniformly type definable subgroups
defined by $p\left(x,y\right)$.
\end{itemize}
Then for any regular cardinal $\kappa>\left|p\right|$ (in the sense
of Notation \ref{not:Size of a type}), and any subfamily $\mathfrak{G}\subseteq\mathfrak{F}$,
there is some $\mathfrak{G}'\subseteq\mathfrak{G}$ such that
\begin{itemize}
\item [$\star$]$\left|\mathfrak{G}'\right|<\kappa$ and $\bigcap\mathfrak{G}$
is $\bigcap\mathfrak{G}'\cap\bigcap_{\kappa}\mathfrak{G}$.
\end{itemize}
\end{prop}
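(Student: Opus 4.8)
The plan is to reduce the statement to the finite Baldwin--Saxl result of Lemma \ref{lem:BaldSaxType}(1) by a transfinite pruning argument. I work inside the fixed family $\mathfrak{G}$ and try to build a small ``essential'' subfamily $\mathfrak{G}'$ of size $<\kappa$ whose intersection, together with the $\kappa$-intersection of the whole $\mathfrak{G}$, recovers $\bigcap\mathfrak{G}$. The inclusion $\bigcap\mathfrak{G}\subseteq\bigcap\mathfrak{G}'\cap\bigcap_{\kappa}\mathfrak{G}$ is automatic, since $\mathfrak{G}'\subseteq\mathfrak{G}$ and every element of $\bigcap\mathfrak{G}$ lies in all but zero (hence $<\kappa$) members of $\mathfrak{G}$. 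So the content is the reverse inclusion: I must arrange $\mathfrak{G}'$ so that any $g$ lying in all of $\mathfrak{G}'$ while failing fewer than $\kappa$ groups of $\mathfrak{G}$ in fact lies in \emph{every} group of $\mathfrak{G}$.

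The key mechanism is the following consequence of Lemma \ref{lem:BaldSaxType}(1): any subfamily of size $\geq|p|^{+}$ has a \emph{redundant} member, one whose removal does not change the intersection of that subfamily. The idea is to extract $\mathfrak{G}'$ as a maximal ``irredundant'' core. Concretely, I would build $\mathfrak{G}'$ by a greedy/transfinite recursion: enumerate attempts to add groups $F\in\mathfrak{G}$ that genuinely shrink the running intersection (i.e.\ $\bigcap\mathfrak{G}'\cap F\subsetneq\bigcap\mathfrak{G}'$), adding them one at a time. At each stage the partial family is irredundant in the sense that every member is essential, so by Lemma \ref{lem:BaldSaxType}(1) (in its form (3)/(2)) its cardinality cannot reach $\kappa$: if it did, some sub-family of size $|p|^{+}\le\kappa$ would already contain a redundant group, contradicting irredundancy and the regularity of $\kappa$. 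This bounds $|\mathfrak{G}'|<\kappa$.

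It remains to check that this $\mathfrak{G}'$ works, i.e.\ that $\bigcap\mathfrak{G}'\cap\bigcap_{\kappa}\mathfrak{G}\subseteq\bigcap\mathfrak{G}$. Take $g\in\bigcap\mathfrak{G}'$ with $|\{F\in\mathfrak{G}\mid g\notin F\}|<\kappa$, and suppose toward contradiction that $g\notin F_{0}$ for some $F_{0}\in\mathfrak{G}$. By maximality of the core, $F_{0}$ did not shrink $\bigcap\mathfrak{G}'$, so $\bigcap\mathfrak{G}'\subseteq F_{0}$, giving $g\in F_{0}$ --- the desired contradiction. The subtle point, and what I expect to be the main obstacle, is making the recursion interact correctly with the $\kappa$-intersection: a single $F_{0}$ containing $\bigcap\mathfrak{G}'$ is easy, but I must ensure the \emph{whole} collection of groups omitting $g$ is controlled. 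Here the hypothesis that $g$ omits fewer than $\kappa$ groups, together with regularity of $\kappa$, is essential: the ``bad'' set $\{F\mid g\notin F\}$ together with $\mathfrak{G}'$ has size $<\kappa$, so Lemma \ref{lem:BaldSaxType}(1) applies to it directly and forces one of the bad groups to be redundant relative to $\mathfrak{G}'$, which collapses to $g$ lying in that group.

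The delicate bookkeeping is therefore in the definition of ``irredundant'' and in verifying that regularity of $\kappa$ (so that a union of $<\kappa$ sets each of size $<\kappa$ stays $<\kappa$) is exactly what lets the finite Baldwin--Saxl bound $|p|^{+}$ propagate up to the uncountable threshold $\kappa$. I would expect the cleanest writeup to phrase the core as a $\subseteq$-minimal subfamily $\mathfrak{G}'$ of $\mathfrak{G}$ with $\bigcap\mathfrak{G}'=\bigcap\mathfrak{G}$ modulo $\bigcap_{\kappa}\mathfrak{G}$, and then derive both $|\mathfrak{G}'|<\kappa$ and property $\star$ from Lemma \ref{lem:BaldSaxType} and the regularity of $\kappa$.
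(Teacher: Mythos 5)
There is a genuine gap, and it sits exactly at the step you flagged as delicate: the bound $\left|\mathfrak{G}'\right|<\kappa$. Your greedy recursion only guarantees that each group was essential \emph{at the moment it was added}, i.e.\ that the running intersections form a strictly decreasing chain; this is strictly weaker than what is needed to invoke Lemma \ref{lem:BaldSaxType}(1), namely that no member is redundant \emph{in the whole family} (a group added at stage $\beta$ can be made redundant by groups added later, for instance by a later group contained in it). Lemma \ref{lem:BaldSaxType}(1) does not forbid long strictly decreasing chains, and such chains exist even in dp-minimal theories: in a monster model of $Th\left(\Qq,+,0,<\right)$ take positive elements $\left\langle a_{\alpha}\left|\,\alpha<\kappa\right.\right\rangle $ with each $a_{\beta}$ infinitesimal with respect to $a_{\alpha}$ for $\alpha<\beta$, and let $H_{\alpha}=\bigcap_{0<n<\omega}\left(-a_{\alpha}/n,a_{\alpha}/n\right)$, as in Example \ref{exa:OAG}; these are uniformly type definable with $\left|p\right|=\aleph_{0}$, and $\left\langle H_{\alpha}\left|\,\alpha<\kappa\right.\right\rangle $ is strictly decreasing at every stage, including limits. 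On this family your recursion runs for all $\kappa$ steps without terminating, and no contradiction with Lemma \ref{lem:BaldSaxType}(1) arises because every member is \emph{globally} redundant: $\bigcap_{\beta\neq\alpha}H_{\beta}\subseteq H_{\alpha+1}\subseteq H_{\alpha}$. The same example shows that the statement your argument would actually prove if it worked is false: your verification step uses maximality to get $\bigcap\mathfrak{G}'\subseteq F_{0}$ for \emph{every} $F_{0}\in\mathfrak{G}$, i.e.\ $\bigcap\mathfrak{G}'=\bigcap\mathfrak{G}$ with $\left|\mathfrak{G}'\right|<\kappa$, never using $\bigcap_{\kappa}\mathfrak{G}$ at all; but for the chain above no subfamily of size $<\kappa$ has the same intersection (by regularity its indices are bounded by some $\beta<\kappa$, and $a_{\beta+1}$ separates), while the proposition survives only because there $\bigcap_{\kappa}\mathfrak{G}=\bigcap\mathfrak{G}$.

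Your fallback for the verification is also a non sequitur: applying Lemma \ref{lem:BaldSaxType}(1) to $\mathfrak{G}'\cup\left\{ F\left|\, g\notin F\right.\right\} $ yields that \emph{some} member of that family is redundant in it, but redundancy of a bad group $F_{1}$ does not give $g\in F_{1}$ when there are other bad groups, since $g$ already fails to lie in the intersection of the remaining members. The missing idea, which is how the paper proceeds, is to make the witnesses themselves live in $\bigcap_{\kappa}\mathfrak{G}$: assuming $\mathfrak{G}$ is a counterexample, one builds by induction on $i<\kappa$ elements $g_{i}\in\bigcap_{\kappa}\mathfrak{G}$ and indices $\alpha_{i}$, choosing each new $\alpha_{i}$ only among groups containing \emph{all} previously chosen witnesses --- possible precisely because each $g_{j}$ omits fewer than $\kappa$ groups and $\kappa$ is regular --- while bookkeeping (the sets $I_{i},R_{i}$ in the paper) guarantees that failure to continue would produce the required $\mathfrak{G}'$ of size $<\kappa$. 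This yields the full symmetric configuration $g_{i}\in H_{\alpha_{j}}\Leftrightarrow i\neq j$ of length $\kappa$, which Lemma \ref{lem:BaldSaxType}(1) genuinely forbids; the asymmetric, chain-like configuration produced by your recursion is not enough, and cannot be upgraded without using membership of the witnesses in $\bigcap_{\kappa}\mathfrak{G}$.
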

\begin{proof}
Let $\kappa$ be such a cardinal. Assume that there is some family
$\mathfrak{G}=\left\{ H_{i}\left|\, i<\varkappa\right.\right\} $,
which is a counterexample of the proposition. For $g\in G$, let $J_{g}=\left\{ i<\varkappa\left|\, g\in H_{i}\right.\right\} $.
So $g\in\bigcap_{\kappa}\mathfrak{G}$ iff $\left|\varkappa\backslash J_{g}\right|$<$\kappa$.

For $i<\kappa$ we define by induction $g_{i}\in\bigcap_{\kappa}\mathfrak{G}$,
$I_{i}\subseteq\varkappa$, $R_{i}\subseteq\varkappa$ and $\alpha_{i}<\varkappa$
such that 
\begin{enumerate}
\item $R_{0}=\left[0,\alpha_{0}\right)$ and for $0<i$, $R_{i}=\bigcup_{j<i}R_{j}\cup\left[\left[\sup_{j<i}\alpha_{j},\alpha_{i}\right)\cap\bigcap_{j<i}I_{j}\right]$
(so $R_{i}\subseteq\alpha_{i}$)
\item $\bigcap_{j\leq i}J_{g_{j}}\subseteq R_{i}\cup I_{i}$ (so by the
definition of $\bigcap_{\kappa}$, and by the regularity of $\kappa$,$\left|\varkappa\backslash\left(R_{i}\cup I_{i}\right)\right|<\kappa$) 
\item $\bigcap_{\kappa}\mathfrak{G}\cap\bigcap_{j<i}H_{\alpha_{j}}\subseteq\bigcap_{\alpha\in R_{i}}H_{\alpha}$
\item $I_{i}\cap\left[0,\alpha_{i}\right]=\emptyset$
\item $I_{i}$ is $\subseteq$-decreasing
\item $\alpha_{i}$ is $<$-increasing
\item $I_{i}\subseteq J_{g_{i}}$
\item For $j<i$, $g_{i}\in H_{\alpha_{j}}$, $g_{j}\in H_{\alpha_{i}}$
and $g_{i}\notin H_{\alpha_{i}}$
\end{enumerate}
Let $\alpha_{0}<\varkappa$ be minimal such that there is some $g_{0}\in\bigcap_{\kappa}\mathfrak{G}\backslash H_{\alpha_{0}}$
(it must exist, otherwise $\bigcap_{\kappa}\mathfrak{G}=\bigcap\mathfrak{G}$).
Let $I_{0}=\left\{ j>\alpha_{0}\left|\, g_{\alpha_{0}}\in H_{j}\right.\right\} $. 

For $\alpha_{0}$, (2), (3) and (4) are true, by the definition of
$\bigcap_{\kappa}$ and the choice of $\alpha_{0}$. 

Suppose we have chosen $g_{j}$, $I_{j}$ and $\alpha_{j}$ (so $R_{j}$
is already defined by (1)) for $j<i$. 

Let $J=\bigcap_{j<i}I_{j}$. Choose $g_{i}\in\left(\bigcap_{\kappa}\mathfrak{G}\cap\bigcap_{j<i}H_{\alpha_{j}}\right)\backslash H_{\alpha_{i}}$
where $\alpha_{i}\in J$ is the smallest possible such that this set
is nonempty. Suppose for contradiction that we cannot find such $\alpha_{i}$,
then $\bigcap_{\kappa}\mathfrak{G}\cap\bigcap_{j<i}H_{\alpha_{j}}\subseteq\bigcap_{\alpha\in J}H_{\alpha}$,
so 
\[
\bigcap_{\kappa}\mathfrak{G}\cap\bigcap_{j<i}H_{\alpha_{j}}\cap\bigcap_{j\in\varkappa\backslash J}H_{j}=\bigcap\mathfrak{G}.
\]
Let $J'=J\cup\bigcup_{j<i}R_{j}$, then by (3), $\bigcap\mathfrak{G}$
equals 
\[
\bigcap_{\kappa}\mathfrak{G}\cap\bigcap_{j<i}H_{\alpha_{j}}\cap\bigcap_{j\in\varkappa\backslash J'}H_{j}.
\]
Note that $\bigcap_{j<i}\left(R_{j}\cup I_{j}\right)\subseteq J'$,
so by regularity of $\kappa$, and by (2), $\left|\varkappa\backslash J'\right|<\kappa$,
so we get a contradiction. 

Let $I_{i}=\left\{ \alpha_{i}<j\in J\left|\, g_{i}\in H_{j}\right.\right\} $,
and let us check the conditions above.

Conditions (4) -- (7) are easy.

Condition (2): By induction we have
\[
\bigcap_{j\leq i}J_{g_{j}}=\bigcap_{j<i}J_{g_{j}}\cap J_{g_{i}}\subseteq J'\cap J_{g_{i}}\subseteq R_{i}\cup\left(J\cap J_{g_{i}}\right)
\]
But by (4) and the definition of $R_{i}$, letting $\alpha=\sup_{j<i}\alpha_{j}$,
we have 
\[
J\cap J_{g_{i}}\subseteq\left[\left[\alpha,\alpha_{i}\right)\cap\bigcap_{j<i}I_{j}\right]\cup I_{i}\subseteq R_{i}\cup I_{i}
\]
Condition (3) is true by the minimality of $\alpha_{i}$: $\bigcap_{\kappa}\mathfrak{G}\cap\bigcap_{j<i}H_{\alpha_{j}}\subseteq\bigcap_{\beta\in J\cap\left[\alpha,\alpha_{i}\right)}H_{\beta}$,
so by the induction hypothesis, we are done. 

Condition (8): We show that $g_{j}\in H_{\alpha_{i}}$ for $j<i$.
We have that $\alpha_{i}\in J$ so also in $I_{j}$ which, by (7)
is a subset of $J_{g_{j}}$, so $g_{j}\in H_{\alpha_{i}}$. 

Finally, we have that for each $i,j<\kappa$, $g_{i}\in H_{\alpha_{j}}$
iff $i\neq j$. But by Lemma \ref{lem:BaldSaxType}, there is some
$i_{0}<\left|p\right|^{+}$ such that $\bigcap_{i\neq i_{0}}H_{\alpha_{i}}=\bigcap_{i<\left|p\right|^{+}}H_{\alpha_{i}}$
--- contradiction.
\end{proof}

\section{\label{sec:An-example}A counterexample}

In this section we shall present an example that shows that Property
A does not hold in general dependent (or even stable) theories.

Let $S=\left\{ u\subseteq\omega\left|\,\left|u\right|<\omega\right.\right\} $,
and $V=\left\{ f:S\to2\left|\,\left|\supp\left(f\right)\right|<\infty\right.\right\} $
where $\supp\left(f\right)=\left\{ x\in S\left|\, f\left(x\right)\neq0\right.\right\} $.
This has a natural group structure as a vector space over $\Ff_{2}=\mathbb{Z}/2\mathbb{Z}$.

For $n,m<\omega$, define the following groups:
\begin{itemize}
\item $G_{n}=\left\{ f\in V\left|\, u\in\supp\left(f\right)\Rightarrow\left|u\right|=n\right.\right\} $
\item $G_{\omega}=\prod_{n}G_{n}$
\item $G_{n,m}=\left\{ f\in V\left|\, u\in\supp\left(f\right)\Rightarrow\left|u\right|=n\,\&\, m\in u\right.\right\} $
(so $G_{0,m}=0$)
\item $H_{n,m}=\left\{ \eta\in G_{\omega}\left|\,\eta\left(n\right)\in G_{n,m}\right.\right\} $
\end{itemize}
Now we construct the model:

Let $L$ be the language (vocabulary) $\left\{ P,Q\right\} \cup\left\{ R_{n}\left|\, n<\omega\right.\right\} \cup L_{AG}$
where $L_{AG}$ is the language of abelian groups, $\left\{ 0,+\right\} $;
$P$ and $Q$ are unary predicates; and $R_{n}$ is binary. Let $M$
be the following $L$-structure: $P^{M}=G_{\omega}$ (with the group
structure), $Q^{M}=\omega$ and $R_{n}=\left\{ \left(\eta,m\right)\left|\,\eta\in H_{n,m}\right.\right\} $.
Let $T=Th\left(M\right)$.

Let $p\left(x,y\right)$ be the type $\bigcup\left\{ R_{n}\left(x,y\right)\left|\, n<\omega\right.\right\} $.
Note that since $H_{n,m}$ is a subgroup of $G_{\omega}$, for each
$m<\omega$, $p\left(M,m\right)$ is a subgroup of $G_{\omega}$.
\begin{claim}
Let $N\models T$ be $\aleph_{1}$-saturated. For any $m$, and any
distinct $\alpha_{0},\ldots,\alpha_{m}\in P^{N}$, $\bigcap_{i\leq m}p\left(N,\alpha_{i}\right)$
is different than any sub-intersection of size $m$. \end{claim}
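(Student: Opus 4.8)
The plan is to realize, inside the $\aleph_{1}$-saturated model $N$, for each $j\leq m$ an element $\eta_{j}\in P^{N}$ that lies in every $p\left(N,\alpha_{i}\right)$ with $i\neq j$ but outside $p\left(N,\alpha_{j}\right)$. Since intersecting with one additional subgroup can only shrink the intersection, producing such an $\eta_{j}$ shows $\bigcap_{i\leq m}p\left(N,\alpha_{i}\right)\subsetneq\bigcap_{i\neq j}p\left(N,\alpha_{i}\right)$ for every $j$, which is exactly the assertion that the full intersection differs from each of the $m+1$ sub-intersections of size $m$. (Here the parameters $\alpha_{i}$ are plugged into the $y$-variable of $p\left(x,y\right)$, so they range over $Q^{N}$, the copy of $\omega$.)

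First I would pin down the witness in the standard model $M$. By definition $p\left(M,\beta\right)=\bigcap_{n}H_{n,\beta}$, so $\eta\in p\left(M,\beta\right)$ exactly when every coordinate satisfies $\eta\left(n\right)\in G_{n,\beta}$, i.e.\ $\eta\left(n\right)$ is supported on size-$n$ subsets containing $\beta$. For distinct $\beta_{0},\ldots,\beta_{m}\in\omega$ and a fixed $j$, set $u_{j}=\left\{ \beta_{i}\left|\,i\leq m,\,i\neq j\right.\right\} $, a subset of $\omega$ of size exactly $m$ that omits $\beta_{j}$, and let $\eta_{j}\in G_{\omega}$ have $\eta_{j}\left(m\right)$ equal to the element of $G_{m}$ whose support is the single set $u_{j}$, and $\eta_{j}\left(n\right)=0$ for $n\neq m$. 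Then $R_{n}\left(\eta_{j},\beta_{i}\right)$ holds for every $n$ and every $i\neq j$ (trivially for $n\neq m$ since $\eta_{j}\left(n\right)=0$, and for $n=m$ because $\beta_{i}\in u_{j}$), while $\neg R_{m}\left(\eta_{j},\beta_{j}\right)$ holds since $\beta_{j}\notin u_{j}$. Thus $\eta_{j}$ witnesses the strict containment in $M$; note that distinctness of the $\beta_{i}$ is precisely what forces $u_{j}$ to have size $m$ and to omit $\beta_{j}$.

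The main work is to transfer this to the possibly nonstandard parameters $\alpha_{i}$ of $N$. The desired $\eta_{j}$ realizes the countable type $\Gamma_{j}\left(x\right):=\left\{ R_{n}\left(x,\alpha_{i}\right)\left|\,n<\omega,\,i\leq m,\,i\neq j\right.\right\} \cup\left\{ \neg R_{m}\left(x,\alpha_{j}\right)\right\} $ over the finite parameter set $\left\{ \alpha_{0},\ldots,\alpha_{m}\right\} $, so by $\aleph_{1}$-saturation it suffices to prove $\Gamma_{j}$ finitely satisfiable in $N$. A finite fragment is contained, for some finite $F\ni m$, in $\left\{ R_{n}\left(x,\alpha_{i}\right)\left|\,n\in F,\,i\neq j\right.\right\} \cup\left\{ \neg R_{m}\left(x,\alpha_{j}\right)\right\} $, and its satisfiability is the instance at $\alpha_{0},\ldots,\alpha_{m}$ of the first-order sentence
\[
\theta_{F,j}:=\forall y_{0}\cdots y_{m}\Bigl[\bigl(\bigwedge_{i}Q\left(y_{i}\right)\wedge\bigwedge_{i<i'}y_{i}\neq y_{i'}\bigr)\to\exists x\,\bigl(P\left(x\right)\wedge\bigwedge_{n\in F,\,i\neq j}R_{n}\left(x,y_{i}\right)\wedge\neg R_{m}\left(x,y_{j}\right)\bigr)\Bigr].
\]
By the previous paragraph the witness $\eta_{j}$ shows $M\models\theta_{F,j}$ for every finite $F$ and every $j$; since $N\equiv M$ we get $N\models\theta_{F,j}$, and applying it to the distinct $\alpha_{i}\in Q^{N}$ yields the required finite witness in $N$.

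The one subtlety to watch, and the reason the argument is not a one-line compactness remark, is that the intersection runs over the infinite index set $\left\{ n<\omega\right\} $: no single formula of $N$ expresses membership in $p\left(N,\alpha_{i}\right)$, so $\aleph_{1}$-saturation is essential, and the finite-satisfiability check must be routed through the elementary sentences $\theta_{F,j}$ in order to pass from the standard model $M$ (where the explicit witness $\eta_{j}$ lives) to $N$. Everything else is a direct unwinding of the definitions of $G_{n,m}$, $H_{n,m}$ and $p$.
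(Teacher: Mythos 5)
Your proof is correct and follows essentially the same route as the paper's: the same explicit witness in $M$ (an element of $G_{\omega}$ supported, in coordinate $m$, on the single size-$m$ set consisting of all parameters except the omitted one), transferred to $N$ by $\aleph_{1}$-saturation. The only differences are cosmetic: you treat every omitted index $j$ symmetrically where the paper does $j=m$ and says ``the general case is similar,'' and you spell out the finite-satisfiability and elementary-equivalence step ($N\equiv M$) that the paper compresses into ``by saturation, it is enough to show that this is the case in $M$.''
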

\begin{proof}
We show that $\bigcap_{i\leq m}p\left(N,\alpha_{i}\right)\subsetneq\bigcap_{i<m}p\left(N,\alpha_{i}\right)$
(the general case is similar). More specifically, we show that 
\[
\bigcap_{i<m}p\left(N,\alpha_{i}\right)\backslash\bigcap_{i\leq m}R_{m}\left(N,\alpha_{i}\right)\neq\emptyset.
\]
By saturation, it is enough to show that this is the case in $M$,
so we assume $M=N$. Note that if $\eta\in\bigcap_{i\leq m}R_{m}\left(M,\alpha_{i}\right)$,
then $\eta\in H_{m,\alpha_{i}}$ for all $i\leq m$. So for all $i\leq m$,
$u\in\supp\left(\eta\left(n\right)\right)\Rightarrow\left|u\right|=m\,\&\,\alpha_{i}\in u$.
This implies that $\supp\left(\eta\left(m\right)\right)=\emptyset$,
i.e. $\eta\left(m\right)=0$. But we can find $\eta\in\bigcap_{i<m}p\left(M,\alpha_{i}\right)$
such that $\eta\left(m\right)\neq0$, for instance let $\eta\left(n\right)=0$
for all $n\neq m$ while $\left|\supp\left(\eta\left(m\right)\right)\right|=1$
and $\eta\left(m\right)\left(\left\{ \alpha_{0},\ldots,\alpha_{m-1}\right\} \right)=1$. 
\end{proof}
Next we shall show that $T$ is stable. For this we will use $\kappa$
resplendent models. This is a very useful (though not a very well
known) tool for proving that theories are stable, and we take the
opportunity to promote it.
\begin{defn}
Let $\kappa$ be a cardinal. A model $M$ is called $\kappa$-resplendent
if whenever
\begin{itemize}
\item $M\prec N$; $N'$ is an expansion of $N$ by less than $\kappa$
many symbols; $\bar{c}$ is a tuple of elements from $M$ and $\lg\left(\bar{c}\right)<\kappa$
\end{itemize}
There exists an expansion $M'$ of $M$ to the language of $N'$ such
that $\left\langle M',\bar{c}\right\rangle \equiv\left\langle N',\bar{c}\right\rangle $.

The following remarks are not crucial for the rest of the proof.\end{defn}
\begin{rem}
\cite{Sh:363}\end{rem}
\begin{enumerate}
\item If $\kappa$ is regular and $\kappa>\left|T\right|$, and $\lambda=\lambda^{<\kappa}$,
then $T$ has a $\kappa$-resplendent model of size $\lambda$. 
\item A $\kappa$ resplendent model is also $\kappa$-saturated.
\item If $M$ is $\kappa$ resplendent then $M^{\eq}$ is also such.
\end{enumerate}
The following is a useful observation:
\begin{claim}
\label{cla:SameSizeResplendent}If $M$ is $\kappa$-resplendent for
some $\kappa$, and $A\subseteq M$ is definable and infinite, then
$\left|A\right|=\left|M\right|$.\end{claim}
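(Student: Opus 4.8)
The plan is to reduce the claim to producing, in some expansion of $M$, a \emph{surjection} from $A$ onto the whole universe $M$; from such a surjection one reads off $|A|\geq|M|$, and since $A\subseteq M$ gives $|A|\leq|M|$ automatically, we conclude $|A|=|M|$. Write $A=\varphi\left(M,\bar{a}\right)$ for a formula $\varphi\left(x,\bar{y}\right)$ and a finite parameter tuple $\bar{a}\in M$. The key observation that makes resplendency applicable is that ``the unary function $G$ maps $\varphi\left(\cdot,\bar{a}\right)$ onto the universe'' is expressible by a single first-order sentence over $\bar{a}$, namely
\[
\psi\ :=\ \forall y\,\exists x\,\left(\varphi\left(x,\bar{a}\right)\land G\left(x\right)=y\right),
\]
in the language expanded by one new function symbol $G$. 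Resplendency will let me transfer $\psi$ from a suitable elementary extension back down to $M$.

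\textbf{Building the extension.} Let $\mu=\left|M\right|$. Since $A$ is infinite, adjoining $\mu$ fresh constants $c_{i}$ ($i<\mu$) to the elementary diagram of $M$, together with the axioms $\varphi\left(c_{i},\bar{a}\right)$ and $c_{i}\neq c_{j}$ for $i\neq j$, yields a consistent theory; a model of it is an elementary extension $M\prec N_{1}$ with $\left|\varphi\left(N_{1},\bar{a}\right)\right|\geq\mu$. By the downward L\"owenheim--Skolem theorem I choose $N$ with $M\cup\left\{ c_{i}\left|\, i<\mu\right.\right\} \subseteq N\preceq N_{1}$ and $\left|N\right|=\mu$ (here I use $\mu\geq\left|T\right|$, which holds since a $\kappa$-resplendent model is $\kappa$-saturated, hence $\left|M\right|\geq\kappa>\left|T\right|$ in the relevant range, cf.\ the Remark). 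By Tarski--Vaught this gives $M\prec N$, and since the $c_{i}$ lie in $\varphi\left(N,\bar{a}\right)$ we get $\left|\varphi\left(N,\bar{a}\right)\right|=\mu=\left|N\right|$.

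\textbf{Applying resplendency.} Because $\left|\varphi\left(N,\bar{a}\right)\right|=\left|N\right|$, there is a surjection from $\varphi\left(N,\bar{a}\right)$ onto $N$; extend it to a total function $G^{N'}:N\to N$, giving an expansion $N'=\left\langle N,G^{N'}\right\rangle $ in which $\psi$ is true. I now invoke $\kappa$-resplendency with the single new symbol $G$ and the finite parameter tuple $\bar{c}=\bar{a}$ (both of size $<\kappa$ as $\kappa$ is infinite): there is an expansion $M'=\left\langle M,G^{M'}\right\rangle $ with $\left\langle M',\bar{a}\right\rangle \equiv\left\langle N',\bar{a}\right\rangle $. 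As $\psi$ is a sentence over $\bar{a}$ holding in $N'$, it holds in $M'$ as well, so $G^{M'}$ maps $A=\varphi\left(M,\bar{a}\right)$ onto $M$. Hence $\left|A\right|\geq\left|M\right|$, and therefore $\left|A\right|=\left|M\right|$.

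\textbf{Main obstacle.} The only genuine work lies in the middle step: arranging an elementary extension $N$ in which the definable set $A$ has cardinality equal to that of $N$ itself, so that a surjection $\varphi\left(N,\bar{a}\right)\to N$ exists and $\psi$ is genuinely satisfiable --- this is where the cardinality bookkeeping (adjoining $\mu$ constants, then cutting down by L\"owenheim--Skolem while keeping $M$ elementary) must be done carefully. Once that is in place, the first-order expressibility of ``onto the universe'' together with resplendency transfers everything for free.
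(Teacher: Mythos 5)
Your proof is correct and takes essentially the same route as the paper's: both produce an elementary extension $N\succ M$ in which the definable set has cardinality $\left|N\right|$ (by compactness plus downward L\"{o}wenheim--Skolem) and then invoke $\kappa$-resplendency with a single new function symbol to transfer the cardinality comparison back to $M$, the only difference being that the paper expresses the comparison as an injection $f:M\to A$ while you express it as a surjection from $A$ onto the universe. One minor point: the hypotheses do not actually force $\kappa>\left|T\right|$ (that inequality appears only in the existence remark, not in the definition of resplendency), so your justification that $\left|M\right|\geq\left|T\right|$ is not warranted; this is harmless, however, since nothing in the argument requires $\left|N\right|=\left|M\right|$ --- cutting down to size $\left|M\right|+\left|T\right|$ works just as well, as all you need is $\left|\varphi\left(N,\bar{a}\right)\right|=\left|N\right|$.
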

\begin{proof}
Enrich the language with a function symbol $f$. Let $T'=T\cup\left\{ f:M\to A\mbox{ is injective}\right\} $.
Then $T'$ is consistent with an elementary extension of $M$ (for
example, take an extension $N$ of $M$ where $\left|A\right|=\left|M\right|$,
and then take an elementary substructure $N'\prec N$ of size $\left|M\right|$
containing $M$ and $A^{N}$). Hence we can expand $M$ to a model
of $T'$. 
\end{proof}
The main fact is
\begin{thm}
\cite[Main Lemma 1.9]{Sh:363}Assume $\kappa$ is regular and $\lambda=\lambda^{\kappa}+2^{\left|T\right|}$.
Then, if $T$ is unstable then $T$ has $>\lambda$ pairwise nonisomorphic
$\kappa$-resplendent models of size $\lambda$%
\footnote{In fact, by \cite[Claim 3.1]{Sh:363}, if $T$ is unstable there are
$2^{\lambda}$ such models.%
}. On the other hand, if $T$ is stable and $\kappa\geq\kappa\left(T\right)+\aleph_{1}$
then every $\kappa$-resplendent model is saturated.\end{thm}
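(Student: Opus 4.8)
The plan is to prove the two halves separately, since they are of quite different character. The stable half --- every $\kappa$-resplendent model is saturated --- I can give essentially in full, via the definability of types in stable theories together with a single application of resplendency. The unstable half --- producing $>\lambda$ pairwise nonisomorphic $\kappa$-resplendent models of size $\lambda$ --- rests on the order property and a coding construction, and this is where I expect the real work to lie.

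For the stable direction, fix a $\kappa$-resplendent $M$ with $\left|M\right|=\lambda$ and $\kappa\geq\kappa\left(T\right)+\aleph_{1}$; I must realize every $p\in S\left(A\right)$ with $A\subseteq M$ and $\left|A\right|<\lambda$. First I would choose, using stability, a set $A_{0}\subseteq A$ with $\left|A_{0}\right|<\kappa\left(T\right)\leq\kappa$ over which $p$ does not fork, and (passing to $M^{\eq}$, which by the Remark is again $\kappa$-resplendent, and enlarging $A_{0}$ by the canonical base of $p$, still of size $<\kappa\left(T\right)$) I may assume $p$ is stationary over $A_{0}$ and that its global nonforking extension is definable over $A_{0}$ by a scheme $\left\langle \psi_{\varphi}\left(\bar{c}_{0},y\right)\left|\,\varphi\right.\right\rangle $, where $\bar{c}_{0}$ enumerates $A_{0}$. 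Now take $M\prec N$ and $c\in N$ realizing the nonforking extension of $p$ to $N$; then for every $\varphi$,
\[
N\models\forall y\left(\varphi\left(c,y\right)\leftrightarrow\psi_{\varphi}\left(\bar{c}_{0},y\right)\right),
\]
precisely because $\tp\left(c/N\right)$ is the $d_{p}$-definable nonforking extension. Let $N'$ be $N$ expanded by one new constant naming $c$; this is an expansion by one symbol, and $\bar{c}_{0}$ is a tuple from $M$ of length $<\kappa$. Resplendency yields an expansion $M'$ of $M$ with $\left\langle M',\bar{c}_{0}\right\rangle \equiv\left\langle N',\bar{c}_{0}\right\rangle $; interpreting the new constant gives $a\in M$ with $M\models\forall y\left(\varphi\left(a,y\right)\leftrightarrow\psi_{\varphi}\left(\bar{c}_{0},y\right)\right)$. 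Restricting $y$ to $A$ and using that $\psi_{\varphi}\left(\bar{c}_{0},b\right)$ holds iff $\varphi\left(x,b\right)\in p$, we get $\tp\left(a/A\right)=p$, so $p$ is realized in $M$ and $M$ is saturated.

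For the unstable direction, the starting point is that an unstable $T$ has a formula with the order property, which lets one interpret linear orders and hence code combinatorial invariants (e.g. the set of cofinalities of cuts, or which stationary subsets of $\lambda$ occur) into models of size $\lambda$ in an isomorphism-invariant way. The task is to carry out this coding while simultaneously arranging $\kappa$-resplendency of the resulting models; the hypothesis $\lambda=\lambda^{\kappa}+2^{\left|T\right|}$ is exactly what guarantees that $\kappa$-resplendent models of size $\lambda$ exist and that the inductive construction (closing off under the $<\kappa$-many expansions demanded by resplendency, each handled by a realization of size $\leq\lambda^{\kappa}=\lambda$) stays within size $\lambda$. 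One then shows the chosen invariant is preserved under isomorphism, producing $>\lambda$ (indeed $2^{\lambda}$) pairwise nonisomorphic such models. I expect this half to be the main obstacle: the delicate point is to interleave the resplendency-closure with the invariant-coding so that resplendency does not wash out the invariant, and to verify that the invariant is genuinely an isomorphism invariant --- this is the technical heart of \cite{Sh:363} and is considerably harder than the stable direction above.
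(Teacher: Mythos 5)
First, a point of comparison: the paper does not prove this statement at all --- it is quoted as a black box from [Sh:363, Main Lemma 1.9] --- so your proposal can only be judged on its own merits, and judged that way the stable half (the part you present ``essentially in full'') contains a genuine error. The configuration you set up is impossible: if $p$ is non-algebraic (the only case that matters, since algebraic types over $A\subseteq M$ are already realized in $M$), then no $c\in N$ can realize the nonforking extension of $p$ to $N$, because a complete non-algebraic type over $N$ contains the formula $x\neq c$ for every $c\in N$. Equivalently, your displayed statement $N\models\forall y\,(\varphi(c,y)\leftrightarrow\psi_{\varphi}(\bar{c}_{0},y))$ for all $\varphi$ is inconsistent: it says precisely that $\operatorname{tp}(c/N)$ has the same definition scheme as the nonforking extension $q\restriction N$, i.e.\ that $c\in N$ realizes $q\restriction N$; concretely, take $\varphi(x,y)$ to be $x=y$, so that $\psi_{\varphi}(\bar{c}_{0},-)$ defines the empty set, and evaluate at $y:=c$. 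What is true is the equivalence for $y$ ranging over $M$ (or over $A$) only, but that restricted statement is not first order in the expanded structure: $A$ is not definable in $N$, and adding a predicate for it does not help, because resplendency only returns an expansion of $M$ that is \emph{elementarily equivalent} over a tuple of length $<\kappa$, which cannot pin the predicate down to be the actual set $A$ (of size up to $\lambda$, far beyond the $<\kappa$ constants allowed). This is the fundamental obstruction: one application of resplendency communicates only $<\kappa$ parameters, so saturation over large sets must be reached indirectly --- e.g.\ by combining the fact (noted in the paper) that $\kappa$-resplendent models are $\kappa$-saturated with the stable-theoretic characterization of saturation via dimensions of indiscernible sets, using resplendency to show every maximal indiscernible set has cardinality $\lambda$ (indiscernibility of a \emph{set} over a countable base is a first-order schema in one new predicate, plus an injection of the model into that predicate). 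There is also a secondary error: ``enlarging $A_{0}$ by the canonical base of $p$, still of size $<\kappa(T)$'' is false in general --- the canonical base and the definition scheme involve up to $|T|$ parameters, and the hypothesis $\kappa\geq\kappa(T)+\aleph_{1}$ does not give $\kappa>|T|$, so the scheme's parameters need not fit into a tuple of length $<\kappa$ as resplendency requires.

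As for the unstable half, you do not prove it: you describe a strategy (order property, coding of invariants, interleaving with the resplendency closure) and correctly identify that the danger is the closure washing out the invariant. That difficulty is precisely the content of [Sh:363] and occupies essentially that entire paper, so the proposal is incomplete there as well. In summary: the unstable direction is absent, and the stable direction as written is not repairable by ``restricting $y$ to $A$'' after the fact --- it needs the different, dimension-theoretic route indicated above.
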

\begin{prop}
$T$ is stable.\end{prop}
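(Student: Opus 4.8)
The plan is to exploit the dichotomy in the quoted Main Lemma of \cite{Sh:363}: an unstable theory has more than $\lambda$ pairwise nonisomorphic $\kappa$-resplendent models of size $\lambda$ whenever $\kappa$ is regular and $\lambda=\lambda^{\kappa}+2^{\left|T\right|}$. So I would fix $\kappa=\aleph_{1}$ and $\lambda=2^{\aleph_{1}}$; then $\lambda^{\aleph_{1}}=2^{\aleph_{1}\cdot\aleph_{1}}=\lambda\geq2^{\aleph_{0}}$, and $\lambda^{<\kappa}=\lambda^{\aleph_{0}}=\lambda$, so by the first remark after the definition $\aleph_{1}$-resplendent models of $T$ of size $\lambda$ exist. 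It then suffices to contradict the unstable alternative by proving its opposite: \emph{any two $\aleph_{1}$-resplendent models of $T$ of size $\lambda$ are isomorphic}. This single contradiction forces $T$ to be stable, and crucially one need not verify the stable side of the dichotomy independently.

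To produce an isomorphism I would run a back-and-forth of length $\lambda$ between two such models $N_{1},N_{2}$, resting on two ingredients. First, Claim~\ref{cla:SameSizeResplendent} guarantees that every infinite definable (over few parameters) subset of each $N_{\ell}$ — the sorts $P$ and $Q$, the subgroups $p\left(N_{\ell},\alpha\right)$, their cosets, and the fibres of the relations $R_{n}$ — has size exactly $\lambda$; this uniform fullness is what lets the enumeration run through all $\lambda$ elements of each model. Second, $\aleph_{1}$-resplendence gives $\aleph_{1}$-saturation, which realises the one type needed at each step \emph{provided} that type is determined by countably much data.

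The real content, and the step I expect to be the main obstacle, is a local analysis of types in $T$. The key structural observation is that for a fixed $\eta\in P$ and each $n$, the set $\left\{ m:R_{n}\left(\eta,m\right)\right\} $ equals $\bigcap\supp\left(\eta\left(n\right)\right)$, and by a first-order dichotomy valid in $T$ it is either all of $Q$ (exactly when $\eta\left(n\right)=0$) or of size at most $n$; hence the nontrivial combinatorial data attached to $\eta$ is a \emph{countable} subset of $Q$. I would then prove a quantifier-elimination-flavoured description showing that $\tp\left(\eta/A\right)$ is controlled by countably much information: the incidence pattern recorded by the $R_{n}$ between $\eta$ and $A\cap Q$, together with the $\Ff_{2}$-linear relations (coordinatewise in $n$) between $\eta$ and the group elements in $A\cap P$. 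Concretely, in the spirit of the subclaim used earlier for the coloured algebraically closed field, I would show that any partial map respecting this incidence/linear data extends to an automorphism of $\C$: one first matches the configuration inside $Q$, then lifts the $Q$-map to an $\Ff_{2}$-linear bijection of the graded vector space $G_{\omega}$ intertwining the subgroups $H_{n,m}$, and finally extends to the whole model. This says that $\tp\left(\eta/A\right)$ is based on a countable subset of $A$.

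With that type-description in hand the back-and-forth becomes routine: at a stage with a partial elementary $f$ of size $<\lambda$ and a new $a\in N_{1}$, the type $\tp\bigl(a/\operatorname{dom}(f)\bigr)$ is based on a countable $A_{0}\subseteq\operatorname{dom}(f)$, so the $\aleph_{1}$-saturation of $N_{2}$ realises the image of $\tp\left(a/A_{0}\right)$; Claim~\ref{cla:SameSizeResplendent} ensures a suitable witness genuinely exists inside $N_{2}$, and the countable-base property guarantees the extended map is again partial elementary. The forth direction is symmetric, so $N_{1}\cong N_{2}$, contradicting the many-models conclusion, whence $T$ is stable. I expect essentially all the difficulty to sit in the type-description step, since once types are shown to be countably based the cardinal bookkeeping and the resplendence input are straightforward.
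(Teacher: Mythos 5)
Your overall frame coincides with the paper's: invoke the nonstructure half of the \cite{Sh:363} dichotomy and contradict it by proving that all $\kappa$-resplendent models of $T$ of a suitable size $\lambda$ are isomorphic (your cardinal bookkeeping with $\kappa=\aleph_{1}$, $\lambda=2^{\aleph_{1}}$ does satisfy both the existence remark and the hypothesis of the Main Lemma, and you are right that only the unstable direction is needed). The genuine gap is in how you prove uniqueness. Your back-and-forth extends $f$ at a new element $a$ by choosing $b$ realizing $f\left(\tp\left(a/A_{0}\right)\right)$ for some countable $A_{0}\subseteq\operatorname{dom}\left(f\right)$; for $f\cup\left\{ \left(a,b\right)\right\} $ to be partial elementary this requires $\tp\left(a/A_{0}\right)\vdash\tp\left(a/\operatorname{dom}\left(f\right)\right)$, and this ``countably based'' property is false in $T$ --- indeed it is refuted by your own type description. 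Consider the type-definable subgroup $Z=\left\{ x\in P\left|\,\forall n,m\, R_{n}\left(x,m\right)\right.\right\} $; by the resplendence argument of Claim \ref{cla:SameSizeResplendent} (applied to this partial type rather than to a formula) $Z$ has size $\lambda$ in your models, and since each $H_{n,m}$ is a subgroup containing every element of $Z$, we have $R_{n}\left(z+c,m\right)\leftrightarrow R_{n}\left(c,m\right)$ for every $z\in Z$ and every $c$. Now take a stage at which $W:=Z\cap\operatorname{dom}\left(f\right)$ is uncountable (unavoidable if the enumeration exhausts the model), and let $a\in Z$ lie outside the subgroup generated by $\operatorname{dom}\left(f\right)\cap P$ (possible since $\left|Z\right|=\lambda$). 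For any countable $A_{0}\subseteq\operatorname{dom}\left(f\right)$, the group generated by $A_{0}\cap P$ is countable, so there is $w^{*}\in W$ outside it. Then $a$ and $w^{*}$ have identical incidence and $\Ff_{2}$-linear data over $A_{0}$: both lie in $Z$, so for every $c$ in the group generated by $A_{0}\cap P$ and all $n,m$, $R_{n}\left(a+c,m\right)\leftrightarrow R_{n}\left(c,m\right)\leftrightarrow R_{n}\left(w^{*}+c,m\right)$, and neither satisfies any linear relation over $A_{0}\cap P$. So by your own key lemma $\tp\left(a/A_{0}\right)=\tp\left(w^{*}/A_{0}\right)$, while the formula $x\neq w^{*}$ belongs to $\tp\left(a/\operatorname{dom}\left(f\right)\right)$. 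Hence $\tp\left(a/A_{0}\right)\not\vdash\tp\left(a/\operatorname{dom}\left(f\right)\right)$; concretely, $b:=f\left(w^{*}\right)$ is a legitimate witness under your criterion, and then $f\cup\left\{ \left(a,b\right)\right\} $ is not even injective. The root problem is that $\aleph_{1}$-saturation cannot realize or verify types over uncountable domains, and no correct sharpening of the countable-base claim can exist, for exactly the reason that in an $\Ff_{2}$-vector space the type of an element outside an uncountable subgroup is never implied by its restriction to a countable subset.

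This is precisely why the paper does not run a back-and-forth. It first restricts to a finite sublanguage $L_{n}$ (harmless, since an unstable formula lives in a finite reduct), so that $P$ decomposes as a \emph{finite} sum of definable subgroups $G^{k}$, and then it constructs a global isomorphism by linear algebra over $\Ff_{2}$: compatible isomorphisms of the pieces exist simply because all the relevant quotients $L_{b}^{j}/\left(K_{m+1}^{j}\cap L_{b}^{j}\right)$ have dimension $\lambda$ --- the dimension computations being exactly where resplendence, as opposed to mere saturation, is used --- and compatibility with the relations $R_{i}$ is checked directly on the decomposition. Equality of dimensions replaces realization of types, which is what sidesteps the obstruction above. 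Note also that by not reducing to a finite sublanguage you have made the task strictly harder: with all $R_{n}$ present, $P$ is no longer the sum of its level subgroups (an element may have infinitely many nonzero levels), so even the paper's decomposition strategy would not apply verbatim to the full theory; and your proposed invariant would in any case have to record incidence data for all sums $\eta+c$ with $c$ in the group generated by $A\cap P$, not just for $\eta$ itself, since $R_{n}$ failing for two elements does not determine whether it fails for their sum.
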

\begin{proof}
We may restrict $T$ to a finite sub-language, $L_{n}=\left\{ P,Q,\right\} \cup\left\{ R_{i}\left|\, i<n\right.\right\} \cup L_{AG}$
. 

Our strategy is to prove that our theory has a unique model in size
$\lambda$ which is $\kappa$ resplendent where $\kappa=\aleph_{0}$,
$\lambda=2^{\aleph_{0}}$. Let $N_{0},N_{1}$ be two $\kappa$-resplendent
models of size $\lambda$. 

By Claim \ref{cla:SameSizeResplendent}, $\left|Q^{N_{0}}\right|=\left|Q^{N_{1}}\right|=\lambda$
and we may assume that $Q^{N_{0}}=Q^{N_{1}}=\lambda$. 

Let $G_{0}=P^{N_{0}}$ and $G_{1}=P^{N_{1}}$ with the group structure.
For $i<n$, $j<2$ and $\alpha<\lambda$, let $H_{i,\alpha}^{j}=\left\{ x\in G_{j}\left|\, R_{i}^{N_{j}}\left(x,\alpha\right)\right.\right\} $.
This is a definable subgroup of $G_{j}$. For $k\leq n$, let $G_{j}^{k}=\bigcap_{\alpha<\lambda,\, i\neq k,\, i<n}H_{i,\alpha}^{j}$.
In our original model $M$, this group is $\left\{ \eta\in G_{\omega}\left|\,\forall i\neq k,\, i<n\left(\eta\left(i\right)=0\right)\right.\right\} $.
Note that $G_{j}=\sum_{k<n}G_{j}^{k}$, and that $G_{j}^{k_{0}}\cap\sum_{k<n,k\neq k_{0}}G_{j}^{k}=G_{j}^{n}$
(this is true in our original model $M$, so it is part of the theory).
We give each $G_{j}^{k}$ the induced $L$-structure $N_{j}^{k}=\left\langle G_{j}^{k},\lambda\right\rangle $,
i.e. we interpret $R_{i}^{N_{j}^{k}}=R_{i}\cap\left(G_{k}^{j}\times\lambda\right)$. 

Since these groups are definable and infinite, their cardinality is
$\lambda$, and hence their dimension (over $\Ff_{2}$) is $\lambda$.
In particular there is a group isomorphism $f_{n}:G_{0}^{n}\to G_{1}^{n}$.
Note that $f_{n}$ is an isomorphism of the induced structure on $N_{j}^{n}=\left\langle G_{j}^{n},\lambda\right\rangle $.
\begin{claim*}
For $k<n$, there is an isomorphism $f_{k}:G_{0}^{k}\to G_{1}^{k}$
which is an isomorphism of the induced structure $N_{j}^{k}=\left\langle G_{j}^{k},\lambda\right\rangle $
and extends $f_{n}$.
\end{claim*}
Assuming this claim, we shall finish the proof. Define $f:G_{0}\to G_{1}$
by: given $x\in G_{0}$, write it as a sum $\sum_{k<n}x_{k}$ where
$x_{k}\in G_{0}^{k}$, and define $f\left(x\right)=\sum_{k<n}f\left(x_{k}\right)$.
This is well defined because if $\sum_{k<n}x_{k}=\sum_{k<n}x'_{k}$
then $\sum_{k<n}\left(x_{k}-x_{k}'\right)=0$ so for all $k<n$, $x_{k}-x_{k}'\in G_{0}^{n}$,
so
\begin{eqnarray*}
\sum_{k<n}\left(f\left(x_{k}\right)-f\left(x_{k}'\right)\right) & = & \sum_{k<n}\left(f\left(x_{k}-x_{k}'\right)\right)=\sum_{k<n}\left(f_{n}\left(x_{k}-x_{k}'\right)\right)=\\
 & = & f_{n}\left(\sum_{k<n}x_{k}-x_{k}'\right)=f_{n}\left(0\right)=0.
\end{eqnarray*}
It is easy to check similarly that $f$ is a group isomorphism. Also,
$f$ is an $L_{n}$-isomorphism because if $R_{i}^{N_{0}}\left(a,\alpha\right)$
for some $i<n$, $\alpha<\lambda$ and $a\in G_{0}$, then write $a=\sum_{k<n}a_{k}$
where $a_{k}\in G_{0}^{k}$. Since $R_{i}^{N_{0}}\left(a,\alpha\right)$
and $R_{i}^{N_{0}}\left(a_{k},\alpha\right)$ for all $k\neq i$,
it follows that $R_{i}^{N_{0}}\left(a_{i},\alpha\right)$ holds, so
$R_{i}^{N_{1}}\left(f_{k}\left(a_{k}\right),\alpha\right)$ holds
for all $k<n$, and so $R_{i}^{N_{1}}\left(f\left(a\right),\alpha\right)$
holds. The other direction is similar. 
\begin{proof}
\renewcommand{\qedsymbol}{} (of claim) For a finite set $b$ of elements
of $\lambda$, let $L_{b}^{j}=G_{j}^{k}\cap\bigcap_{\alpha\in b}H_{k,\alpha}^{j}$.
For $m\leq k+1$, let $K_{m}^{j}=\sum_{\left|b\right|=m}L_{b}^{j}$
(as a subspace of $G_{k}^{j}$), so $K_{m}^{j}$ is not necessarily
definable (however $K_{0}^{j}$ and $K_{k+1}^{j}$ are). So this is
a decreasing sequence of subgroups (so subspaces), $G_{j}^{k}=K_{0}^{j}\geq\ldots\geq K_{k+1}^{j}=G_{j}^{n}$.
Now it is enough to show that
\begin{subclaim*}
For $m\leq k+1$, there is an isomorphism $f_{m}:K_{m}^{0}\to K_{m}^{1}$
which is an isomorphism of the induced structure $\left\langle K_{m}^{j},\lambda\right\rangle $. \end{subclaim*}
\begin{proof}
(of subclaim) The proof is by reverse induction. For $m=k+1$ we already
have this. Suppose we have $f_{m+1}$ and we want to construct $f_{m}$.
Let $b\subseteq\lambda$ of size $m$. If $m=k$, then it is easy
to see that $\left|L_{b}^{j}/\left(K_{m+1}^{j}\cap L_{b}^{j}\right)\right|=2$
(this is true in $M$), so there is an isomorphism $g_{b}:L_{b}^{0}/\left(K_{m+1}^{0}\cap L_{b}^{0}\right)\to L_{b}^{1}/\left(K_{m+1}^{1}\cap L_{b}^{1}\right)$.

Assume $\left|b\right|<k$. In our original model $M$, $L_{b}\subseteq K_{k}$,
but here can find infinitely pairwise distinct cosets in $L_{b}^{j}/\left(K_{m+1}^{j}\cap L_{b}^{j}\right)$.
Indeed, we can write a type in $\lambda$ infinitely many variables
$\left\{ x_{i}\left|\, i<\lambda\right.\right\} $ over $b$ saying
that $x_{i}\in L_{b}$ and $x_{i}-x_{j}\notin K_{m+1}$ for $i\neq j$
--- for all $r<\omega$, it will contain a formula of the form
\[
\forall\left(z_{0},\ldots,z_{r-1}\right)\forall_{t<r}\left(\bar{y}_{t}\right)\left(\left[\forall t<r\left(z_{t}\in L_{\bar{y}_{t}}\land\left|\bar{y}_{t}\right|=m+1\right)\right]\to x_{i}-x_{j}\neq\sum_{t=0}^{r-1}z_{t}\right).
\]
To show that this type is consistent, we may assume that $b\subseteq Q^{M}$
so we work in our original model $M$. For such $r$ and $b$, choose
distinct $\eta_{0},\ldots\eta_{l-1}\in G_{\omega}$ such that for
$s,s'<l$
\begin{itemize}
\item $\eta_{s}\left(i\right)=0$ for $i\neq k$
\item $\left|\supp\left(\eta_{s}\left(k\right)\right)\right|=r+1$
\item $u_{1}\in\supp\left(\eta_{s}\left(k\right)\right)\,\&\, u_{2}\in\supp\left(\eta_{s'}\left(k\right)\right)\Rightarrow u_{1}\cap u_{2}=b$
($s$ might be equal to $s'$)
\end{itemize}
Then $\left\{ \eta_{s}\left|\, s<l\right.\right\} $ is such that
$\eta_{s_{1}},\eta_{s_{2}}$ satisfies the formula above for all $s_{1}\neq s_{2}<l$
(assume $z_{0}\in L_{c_{0}},\ldots,z_{r-1}\in L_{c_{r}}$ where $\left|c_{t}\right|=m+1$
and $\sum_{t<r}z_{t}=\eta_{s_{1}}-\eta_{s_{2}}$. We may assume that
\[
\bigcup_{t<r}\supp\left(z_{t}\right)=\supp\left(\eta_{s_{1}}-\eta_{s_{2}}\right)=\supp\left(\eta_{s_{1}}\right)\cup\supp\left(\eta_{s_{2}}\right),
\]
but then for $t<r$, $\left|\supp\left(z_{t}\right)\right|\leq1$
by our choice of $\eta_{s}$ and this is a contradiction).

Now, let $N_{j}'$ be an elementary extension of $N_{j}$ with realizations
$D=\left\{ c_{i}\left|\, i<\lambda\right.\right\} $ of this type,
and we may assume $\left|N_{j}'\right|=\lambda$. Then, add a predicate
for the set $D$, and an injective function from $N_{j}'$ to $D$.
Finally, by resplendence of $N_{j}$, $\left|L_{b}^{j}/\left(K_{m+1}^{j}\cap L_{b}^{j}\right)\right|=\lambda$. 

Hence it has a basis of size $\lambda$, and let $g_{b}:L_{b}^{0}/\left(K_{m+1}^{0}\cap L_{b}^{0}\right)\to L_{b}^{1}/\left(K_{m+1}^{1}\cap L_{b}^{1}\right)$
be an isomorphism of $\Ff_{2}$-vector spaces. 

Note that $f_{m+1}\upharpoonright K_{m+1}^{0}\cap L_{b}^{0}$ is onto
$K_{m+1}^{1}\cap L_{b}^{1}$ (this is because $f_{m+1}$ is an isomorphism
of the induced structure). We can write $L_{b}^{j}=\left(K_{m+1}^{j}\cap L_{b}^{j}\right)\oplus W^{j}$
where $W^{j}\cong L_{b}^{j}/\left(K_{m+1}^{j}\cap L_{b}^{j}\right)$,
so $g_{b}$ induces an isomorphism from $W^{0}$ to $W^{1}$. Now
extend $f_{m+1}\upharpoonright K_{m+1}^{0}\cap L_{b}^{0}$ to $f_{m}^{b}:L_{b}^{0}\to L_{b}^{1}$
using $g_{b}$. 

Next, note that $\left\{ L_{b}^{j}\left|\, b\subseteq\lambda,\,\left|b\right|=m\right.\right\} $
is independent over $K_{m+1}^{j}$, i.e. for distinct $b_{0},\ldots,b_{r}$,
$L_{b_{r}}^{j}\cap\sum_{t<r}L_{b_{r}}^{j}\subseteq K_{m+1}^{j}$.
Indeed, in our original model $M$, the intersection $L_{b_{r}}\cap\sum_{t<r}L_{b_{t}}$
is equal to $\sum_{t<r}L_{b_{r}\cup b_{t}}$, so this is true also
in $N_{j}$ (in fact, this is true for every choice of finite sets
$b_{t}$ --- regardless of their size). 

Define $f_{m}$ as follows: given $a\in K_{m}^{j}$, we can write
$a=\sum_{b\in B}a_{b}$ where $a_{b}\in L_{b}$ for a finite $B\subseteq\left\{ b\subseteq\lambda\left|\,\left|b\right|=m\right.\right\} $,
and define $f_{m}\left(a\right)=\sum f_{b}\left(a_{b}\right)$. It
is well defined: if $\sum_{b\in B}x_{b}=\sum_{b'\in B'}y_{b'}$, then
for $b_{1}\in B\cap B'$, $b_{2}\in B\backslash B'$ and $b_{3}\in B'\backslash B$,
$\left(x_{b_{1}}-y_{b_{1}}\right),x_{b_{2}},y_{b_{3}}\in K_{m+1}$,
so 
\begin{eqnarray*}
 & \sum_{b\in B}f_{b}\left(x_{b}\right)-\sum_{b'\in B'}f_{b'}\left(y_{b'}\right)=\\
 & \sum_{b\in B\cap B'}f_{m+1}\left(x_{b}-y_{b}\right)+\sum_{b\in B\backslash B'}f_{m+1}\left(x_{b}\right)-\sum_{b\in B'\backslash B}f_{m+1}\left(y_{b}\right) & =0.
\end{eqnarray*}
It is easy to check similarly that $f_{m}$ is a group isomorphism.

We check that $f_{m}$ is an isomorphism of the induced structure.
So suppose $a\in K_{m}^{0}$ , $\alpha<\lambda$ and $i<\omega$.
If $i\neq k$, then since $K_{m}^{j}\subseteq G_{j}^{k}$ for $j<2$,
both $R_{i}^{N_{0}}\left(a,\alpha\right)$ and $R_{i}^{N_{1}}\left(f\left(a\right),\alpha\right)$
hold. Suppose $R_{k}^{N_{0}}\left(a,\alpha\right)$ holds. Write $a=\sum_{b\in B}a_{b}$
as above. Then (by the remark in parenthesis above) we may assume
that $b\in B\Rightarrow\alpha\in b$. So by definition of $f_{m}$,
$R_{k}^{N_{1}}\left(f_{m}\left(a_{\alpha}\right),\alpha\right)$ holds.
The other direction holds similarly and we are done.
\end{proof}
\end{proof}
\end{proof}
\begin{note}
This example is not strongly dependent, because the sequence of formulas
$R_{n}\left(x,y\right)$ is a witness of that the theory is not strongly
dependent. So as we said in the introduction, it is still open whether
Property A holds for strongly dependent theories.
\end{note}
\bibliographystyle{alpha}
\bibliography{common}

\end{document}